\documentclass[10pt]{amsproc}
\usepackage{fullpage}
\usepackage{graphicx}
\usepackage{amssymb}
\usepackage{amsfonts}
\usepackage{amsthm}
\usepackage{amsmath}
\usepackage[shortalphabetic]{amsrefs}
\usepackage{xypic}
\usepackage{eucal}
\usepackage{pdfsync}
\usepackage{hyperref}
\usepackage[margin=1.25in]{geometry}

% Titlerule is a FAT ruler

% For comments in the draft - work in progress

% Notes in the margin are nicer this way.

\def\ra{\rightarrow}
\def\Z{{\mathbb Z}}

\def\R{{\mathbb R}}
\def\C{{\mathbb C}}

\def\F{{\mathbb F}}
\def\K{{\mathbb K}}

\def\cc{\mathcal{C}}
\def\dd{\mathcal{D}}
\def\bb{\mathcal{B}}

\def\b{\beta}
\def\r#1{\mathrm{#1}}

\def\mc#1{\mathcal{#1}}
\def\mc#1{\mathcal{#1}}

\def\ainf{A_\infty}

\def\z2{\Z / 2\Z}

\def\lra{\longrightarrow}
\def\ra{\rightarrow}

\def\ob{\mathrm{ob\ }}

\newtheorem{lem}{Lemma}
\newtheorem{prop}[lem]{Proposition}
\newtheorem{thm}[lem]{Theorem}
\newtheorem{cor}[lem]{Corollary}
\newtheorem{defn}[lem]{Definition}

\theoremstyle{remark}
\newtheorem{rem}[lem]{Remark}

\numberwithin{equation}{section}
\setcounter{tocdepth}{2}

\begin{document}

\begin{abstract}
Suppose one has found a non-empty sub-category $\mathcal{A}$ of the Fukaya category of a compact Calabi-Yau manifold $X$ which is {\em homologically smooth} in the sense of non-commutative geometry, an algebraic condition intrinsic to $\mathcal{A}$.  Then, we show $\mathcal{A}$ split-generates the Fukaya category and moreoever, that our hypothesis implies (and is therefore equivalent to the assertion that) $\mathcal{A}$ satisfies Abouzaid's geometric generation criterion \cite{Abouzaid:2010kx}. An immediate consequence of earlier work \cites{ganatra1_arxiv, GPS1:2015, GPS2:2015} is that the open-closed and closed-open maps, relating quantum cohomology to the Hochschild invariants of the Fukaya category, are also isomorphisms. 
Our result continues to hold when $c_1(X) \neq 0$ (for instance, when $X$ is monotone Fano), under a further hypothesis: the 0th Hochschild cohomology of $\mathcal{A}$ $\r{HH}^0(\mathcal{A})$ should have sufficiently large rank: $\mathrm{rk}\ \r{HH}^0(\mathcal{A}) \geq \mathrm{rk}\ \mathrm{QH}^0(X)$.  Our proof depends only on formal axiomatizable structures of Fukaya categories and open-closed maps, the most recent and crucial of which,  compatibility of the open-closed map with pairings, was observed independently in ongoing joint work of the author with Perutz and Sheridan \cite{GPS2:2015} and by Abouzaid-Fukaya-Oh-Ohta-Ono \cite{afooo}; a proof in the simplest technical settings appears here in an Appendix.
Because categories Morita equivalent to categories of coherent sheaves or matrix factorizations are homologically smooth, our result applies to resolve the split-generation question in homological mirror symmetry for compact symplectic manifolds (generalizing a result of Perutz-Sheridan \cite{Perutz:2015aa} proven in the case $c_1(X) = 0$): 
we now know that any embedding of (a split-generating subcategory of) coherent sheaves or matrix factorizations into the split-closed derived Fukaya category is automatically a Morita equivalence when it has large enough $\mathrm{HH}^0$ (which it always does if $c_1(X)=0$).
\end{abstract}

\def\oc{\mc{OC}}
\def\co{\mc{CO}}
\def\b{\mc{B}}
\def\z{ { \mathbb Z}}
\def\hhmf{\mathrm{R} \Gamma (\wedge^\bullet T_Y, [W - y, \cdot])}
\title{Automatically generating Fukaya categories and computing quantum cohomology}
\author{Sheel Ganatra}
\thanks{The author was partly supported by an NSF postdoctoral fellowship.}
\maketitle
\section{Introduction}
\subsection{Main Result}
A recent result of Perutz and Sheridan
\cite{Perutz:2015aa} shows that the (split-closed) derived Fukaya category of a
compact Calabi-Yau manifold $X$, $perf(\mc{F}(X))$, satisfies a certain rigidity property expressed
via mirror symmetry: any embedding $Coh(Y) \hookrightarrow perf(\mc{F}(X))$ of
the (dg enhancement of the) derived category of coherent sheaves $Coh(Y)$ of a Calabi-Yau family $Y$ of
varieties with maximally unipotent monodromy is automatically a
quasi-equivalence, meaning its image split-generates the Fukaya category.
From a concrete geometric perspective, any Lagrangian in
$X$ with non-vanishing Floer cohomology must intersect the Lagrangians in the
image of any generating collection of sheaves.

The goal of our work is to establish more general rigidity, or {\em automatic
split-generation}, properties for the Fukaya category of a closed symplectic
manifold $X$, independent of mirror symmetry or the Calabi-Yau hypotheses (we
note that the generation argument in \cite{Perutz:2015aa}, besides only
applying to images of embeddings from such categories $Coh(Y)$,
crucially requires $c_1(X) = 0$).  Our argument, which is relatively short and
completely different from the methods of \cite{Perutz:2015aa}, still has
the strongest consequence when $c_1(X) = 0$:
\begin{thm}
    \label{thm:mainCY}
   Let $X$ be a symplectic manifold with $c_1(X) = 0$ and let $\mc{A} \subset
   \mc{F}(X)$ be a non-empty full subcategory of its ($\Z$-graded) Fukaya
   category over a field $\K$.
    Suppose that $\mc{A}$ is {\em homologically smooth}.
    Then $\mc{A}$ split-generates $\mc{F}(X)$.
\end{thm}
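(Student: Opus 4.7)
The plan is to reduce to a split-generation criterion of Abouzaid type, in the form appropriate for closed symplectic manifolds: $\A$ split-generates $\f(X)$ as soon as the unit $1_X \in \r{QH}^*(X)$ lies in the image of the open-closed map $\oc\colon \r{HH}_*(\A) \to \r{QH}^*(X)$.  Thus it will suffice to produce an explicit class $\theta \in \r{HH}_*(\A)$ with $\oc(\theta) = 1_X$.

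The principal ingredient I will use is the adjunction between $\oc$ and $\co$ established in the Perutz--Sheridan collaboration \cites{GPS1:2015, GPS2:2015} and independently in \cite{afooo}, which I will invoke in the form
\[
\langle \oc(\alpha), \beta \rangle_X \;=\; \langle \alpha, \co(\beta)\rangle_\A,
\]
where $\langle -,-\rangle_X$ is Poincar\'e duality on $\r{QH}^*(X)$ and $\langle -,-\rangle_\A$ the canonical pairing $\r{HH}_*(\A)\otimes \r{HH}^*(\A)\to \K$.  Because $X$ is compact, $\A$ is automatically proper; combined with homological smoothness, this canonical pairing is perfect, and, with $c_1(X)=0$ supplying a $\z$-grading of the correct parity, $\A$ inherits a (weak) Calabi--Yau structure of dimension $n=\dim_\R X/2$, yielding an isomorphism $\r{CY}\colon \r{HH}^*(\A) \xrightarrow{\sim} \r{HH}_{*-n}(\A)$.

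With these in hand the candidate for $\theta$ is $\r{CY}(\co(1_X))$; by unitality of $\co$ this is $\r{CY}(1_\A)$, i.e.\ the ``fundamental class'' of the abstract CY structure on $\A$.  The adjunction then gives, for every $\beta \in \r{QH}^*(X)$,
\[
\langle \oc(\theta), \beta\rangle_X \;=\; \langle \theta, \co(\beta)\rangle_\A \;=\; \r{tr}_\A(\co(\beta)),
\]
where $\r{tr}_\A$ is the CY trace on $\r{HH}^*(\A)$.  The concluding step will be to identify $\r{tr}_\A \circ \co$ with the integration functional $\int_X(-) = \langle 1_X, -\rangle_X$ on $\r{QH}^*(X)$, after which nondegeneracy of Poincar\'e duality will force $\oc(\theta)=1_X$ and the generation criterion closes the argument.

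The main obstacle is this last compatibility: the abstract Calabi--Yau trace on $\A$ produced from smoothness and properness must agree, when pulled back by $\co$, with the geometric Poincar\'e trace on $X$.  Equivalently, $\oc$ must carry the abstract fundamental class of $\A$ to the unit of $\r{QH}^*(X)$.  The cyclic enhancement of $\oc$ from \cites{GPS1:2015, GPS2:2015} and \cite{afooo} is precisely the tool for encoding this, but setting it up and verifying that the smoothness-induced CY structure matches the one coming from $[X]\in \r{QH}^*(X)^\vee$ via the cyclic $\oc$ is where the substantive work lies; once in place, everything else is formal.
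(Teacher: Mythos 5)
Your sketch correctly identifies the target (Abouzaid's criterion: the unit should lie in the image of $\oc$) and invokes the right auxiliary structures (the $\oc$--$\co$ adjunction and the weak proper Calabi--Yau structure on $\mc{A}$), but the argument as written has two problems, one minor and one fatal. The minor one: a weak proper Calabi--Yau structure on $\mc{A}$ does not follow from smoothness, properness, and a $\Z$-grading --- smooth proper categories carry Serre functors, not trivialized ones. The wpCY structure used here is a geometric datum, a chain-level refinement of Poincar\'{e} duality for Floer cohomology between compact Lagrangians, present on any full subcategory of $\mc{F}(X)$ irrespective of smoothness. The fatal one: your final step is circular. You define $\theta = \r{CY}(1_{\mc{A}})$ and set out to prove $\r{tr}_{\mc{A}}(\co(\beta)) = \langle 1_X, \beta\rangle_X$. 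But $\r{tr}_{\mc{A}}(\gamma) = \langle \theta, \gamma\rangle_{\mc{A}}$ by definition of the CY trace, so the $\oc$--$\co$ adjunction rewrites your desired identity as $\langle \oc(\theta), \beta\rangle_X = \langle 1_X, \beta\rangle_X$, i.e.\ $\oc(\theta) = 1_X$. The step you defer as the remaining ``substantive work'' is not an auxiliary compatibility --- it is the entire theorem, and nothing in the sketch makes progress toward it.

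The missing ingredient, which the paper uses in place of a trace identification, is the compatibility of $\oc$ with the \emph{categorical Mukai pairing} on Hochschild homology (the Cardy relation, Theorem \ref{thm:ocpairing}): $\langle \oc(\alpha), \oc(\beta)\rangle_X = (-1)^{n(n+1)/2}\langle \alpha, \beta\rangle_{Muk}$. This is a genuinely new geometric input (proved via annulus degenerations in \cite{GPS2:2015}) and is the only place homological smoothness enters: by Shklyarov's theorem (Proposition \ref{mukainondeg}), the Mukai pairing on $\r{HH}_*(\mc{A})$ is nondegenerate precisely because $\mc{A}$ is smooth and proper, and the Cardy relation then forces $\oc|_{\mc{A}}$ to be injective. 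After that, a dimension count closes the argument: $\r{rk}\ \r{QH}^0(X) = 1$ since $c_1(X)=0$, while $\r{rk}\ \r{HH}_{-n}(\mc{A}) = \r{rk}\ \r{HH}^0(\mc{A}) \geq 1$ by the wpCY duality and nonemptiness of $\mc{A}$, so the injective map $\oc\colon \r{HH}_{-n}(\mc{A}) \to \r{QH}^0(X)$ is an isomorphism and in particular hits the unit. No explicit preimage of $1_X$ is ever constructed and no identification of abstract and geometric CY traces is needed; if you want to salvage your packaging, the Cardy relation is exactly the statement that would let you compare the two traces, but at that point you are running the paper's argument in disguise.
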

{\em (Homological) smoothness}, a concept introduced by Kontsevich
\cites{Kontsevich:1999aa, Kontsevich:2009ab} and recalled in Definition
\ref{def:smooth}, is an algebraic finiteness condition amounting to possessing
an algebraic resolution of the diagonal in the sense of Beilinson
\cite{Beuilinson:1978bh}. Smoothness is a Morita invariant notion, and
generalizes a finiteness property possessed by the coordinate ring of a smooth
complex affine variety.  The key here is that smoothness of $\mc{A} \subset
\mc{F}(X)$ is a condition manifestly {\em intrinsic} to $\mc{A}$ as an ($\ainf$
or dg) category, not requiring further study of the geometry of $X$.
Algebro-geometric arguments imply a given $\mc{A}$ is automatically
smooth whenever it is Morita equivalent to (e.g., equivalent to a
split-generating subcategory of) a category of coherent sheaves or graded
matrix factorizations on some (separated finite type) scheme $Y$ over (perfect)
$\K$; see \S \ref{sec:smoothness}.  In particular,
it follows that Theorem \ref{thm:mainCY}
recovers the generation result of \cite{Perutz:2015aa} (see Corollary
\ref{coreHMSCY}).\footnote{Unlike \cite{Perutz:2015aa}, we do not not need
to impose an a priori `maximal unipotency' hypotheses on the candidate mirror $Y$, though
this will a posteriori hold if working with the category  $Coh(Y)$ for $Y$ a Calabi-Yau
variety over a Novikov field; see Remark \ref{restrictivehypothesis}.} 
\begin{rem}[On the necessity of smoothness for split-generation, compare \cite{ganatra1_arxiv}] \label{smoothnessnecessary}
    We prove Theorem \ref{thm:mainCY} and subsequent variants by showing that
    under 
    $\mathcal{A}$ automatically satisfies Abouzaid's
    well-known geometric split-generation criterion for Fukaya categories \cite{Abouzaid:2010kx}, reviewed in
    \S \ref{intro:oc}.  In fact, in order to appeal to Abouzaid's criterion our
    hypotheses are minimal (and in particular necessary): see Theorem
    \ref{thm:smoothness} for a shortened proof (of a result from \cite{ganatra1_arxiv})
    that if $\mc{A} \subset \mc{F}(X)$ satisfies Abouzaid's criterion, it must be smooth and non-empty.  

In particular, for any compact symplectic manifold $X$, 
Theorem \ref{thm:mainCY} along with its variants in non-Calabi-Yau cases (Theorem \ref{thm:mainFano} and Remark
\ref{rem:generalFukayaresult}) completely reduces the verification of
Abouzaid's geometric criterion for $\mc{A}$ (a priori involving new computations of
pseudoholomorphic discs beyond those appearing in $\mc{A}$) to an intrinsic algebraic criterion
(requiring only computation of $\mc{A}$ itself, and when $c_1(X) \neq 0$ some
information about cohomology of the ambient $X$). 
\end{rem}

The proof of Theorem \ref{thm:mainCY} is a very short consequence of
structural properties of Fukaya categories and open-closed maps (none of which
are new to this work), and is therefore valid for any construction of the
Fukaya category that, along with the quantum cohomology $\r{QH}^*(X)$,
satisfies axiomatics detailed in \S \ref{sec:geometry} - \ref{sec:openclosed}.
In particular, the same techniques apply to yield new automatic
split-generation results in non-Calabi-Yau settings, as we will now describe.
For simplicity, we will keep the focus on settings in which a classical
construction of Fukaya categories (along with verification of all axiomatics)
is available, and defer the most general statement of our result to Remark
\ref{rem:generalFukayaresult}; the proof is entirely the same (assuming the
relevant axiomatics).  For instance, let us suppose $X$ is monotone (e.g., a Fano
variety equipped with its monotone symplectic form). It is well known that the
monotone Fukaya category of $X$ splits into summands, denoted $\mc{F}_w$,
indexed by eigenvalues $w$ of the quantum multiplication operator $c_1(X) \star
- $; see \S\ref{monotoneFuk}.  Denoting
by $\r{QH}^*(X)_w$ the corresponding generalized eigenspace of
$\r{QH}^*(X)$, our main result in this case is:  
\begin{thm}\label{thm:mainFano}
    Let $\mc{A} \subset \mc{F}_w$ be a full subcategory. 
    Suppose that
    \begin{enumerate}
        \item $\mc{A}$ is {\em homologically smooth}; and 
        \item\label{rankinequality} there is a rank
    inequality $\mathrm{rk}\ \r{HH}^{0}(\mc{A}) \geq \mathrm{rk}\ \r{QH}^0(X)_w$, where $\r{HH}^*(\mc{A})$ denotes the {\em Hochschild cohomology} of $\mc{A}$ (see \S \ref{sec:algebra}).
    \end{enumerate}
     Then $\mc{A}$ split-generates $\mc{F}_w$ (and the
     inequality in (\ref{rankinequality}) is an equality).
\end{thm}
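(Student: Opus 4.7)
My plan is to reduce the problem to an Abouzaid-type generation criterion: $\mc{A}$ split-generates $\mc{F}_w$ provided the unit $e_w \in \r{QH}^0(X)_w$ of the $w$-eigenspace lies in the image of the open-closed map $\oc: \r{HH}_*(\mc{A}) \to \r{QH}^*(X)_w$. This criterion is a formal consequence of the module-theoretic axiomatics of $\oc$ reviewed in \S\ref{sec:openclosed} and is the common target for both this theorem and Theorem \ref{thm:mainCY}. The task thus reduces to exhibiting a Hochschild homology class of $\mc{A}$ that maps to $e_w$.

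To produce such a class I would invoke the Cardy-type duality between $\oc$ and the closed-open map $\co: \r{QH}^*(X)_w \to \r{HH}^*(\mc{A})$ that is available when $\mc{A}$ is homologically smooth. Smoothness supplies a Beilinson-style diagonal bimodule resolution of $\mc{A}$, which in turn underpins a non-degenerate Mukai-type pairing relating $\r{HH}^*(\mc{A})$ to $\r{HH}_*(\mc{A})$ (with shifts/twists appropriate to the Serre functor on $\mc{A}$). The precise compatibility I would use, which is the crucial input from \cite{GPS1:2015, GPS2:2015, afooo}, is that under this Mukai pairing and Poincar\'e duality on $\r{QH}^*(X)$, the map $\oc$ is identified up to canonical isomorphism with the linear dual of $\co$, restricted in the monotone case to the $w$-summand on both sides. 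Consequently, $\oc$ surjects onto $\r{QH}^*(X)_w$ if and only if $\co$ is injective.

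The two hypotheses of the theorem now combine to force injectivity of $\co$. The closed-open map is always a unital algebra homomorphism, so whenever $\mc{A} \subset \mc{F}_w$ is non-empty one has $\co(e_w) = 1_{\mc{A}} \neq 0$, ensuring that $\ker \co$ is a proper ideal. Restricting to degree zero, $\co^0: \r{QH}^0(X)_w \to \r{HH}^0(\mc{A})$ is a unital map between finite-dimensional algebras of comparable rank; the hypothesis $\r{rk}\ \r{HH}^0(\mc{A}) \geq \r{rk}\ \r{QH}^0(X)_w$ is the dimension-theoretic room needed to promote the non-vanishing of $\co^0$ on the unit to outright injectivity, by exploiting the generalized-eigenspace (hence local Artinian) structure of $\r{QH}^*(X)_w$ together with the $\co$-module structure on $\r{HH}^*(\mc{A})$. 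Injectivity of $\co$ then yields surjectivity of $\oc$ via the Cardy duality, placing $e_w$ in the image of $\oc$ and securing split-generation of $\mc{F}_w$ by $\mc{A}$. Once split-generation is established, $\oc$ and $\co$ become isomorphisms on the $w$-summand, which forces the rank inequality in (\ref{rankinequality}) to be an equality.

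The main obstacle I anticipate is the rank-counting step: explicitly extracting injectivity of the ring map $\co$ from the rank inequality together with unitality, using the internal algebraic structure (local Artinian / Frobenius) of $\r{QH}^*(X)_w$ and the $\co$-module structure of $\r{HH}^*(\mc{A})$. Everything else amounts to a relatively formal transport of the Calabi-Yau case from Theorem \ref{thm:mainCY} to a single eigenvalue summand, once the splitting of $\mc{F}(X)$ by eigenvalues of $c_1(X)\star -$ from \S\ref{monotoneFuk} is taken for granted.
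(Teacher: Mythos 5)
Your outline correctly identifies the major structural inputs (Abouzaid's criterion, non-degeneracy of the Mukai pairing from smoothness, the weak proper Calabi-Yau duality between $\oc$ and $\co$, the rank inequality), but the key step where you deduce injectivity of $\co$ is a gap, and it is not what the paper does.

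You reduce the problem to showing $\co$ is injective, and then argue: $\co$ is unital, $\r{QH}^*(X)_w$ is local Artinian/Frobenius, the rank hypothesis gives ``dimension-theoretic room,'' and this promotes nonvanishing on the unit to injectivity. That inference does not hold. A unital ring map out of a local Artinian Frobenius algebra into a strictly larger algebra can perfectly well have a nonzero kernel (e.g.\ $\K[x]/(x^2)\to\K^3$, $x\mapsto 0$); the rank inequality gives the codomain \emph{more} room, which works against, not for, injectivity, and neither unitality nor locality rules out a kernel. Nothing in the hypotheses lets you close this gap without using the Mukai pairing more substantively than you do.

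What the paper actually does, and what your proposal omits, is to use the Cardy isometry of Theorem \ref{thm:ocpairing} in the \emph{other direction}. The isometry $\langle\oc(\alpha),\oc(\beta)\rangle_X = \pm\langle\alpha,\beta\rangle_{Muk}$ together with non-degeneracy of the Mukai pairing (Proposition \ref{mukainondeg}, from smoothness and properness) shows that $\oc$ is \emph{injective} and $\oc^{\vee}_1$ is \emph{surjective}; equivalently, $\co$ is \emph{surjective}, not injective. This is where smoothness does its work. The rank inequality then enters precisely to upgrade this to an isomorphism: the wpCY structure (Lemma \ref{hhweakcy}) gives $\r{HH}_n(\mc{A})^{\vee}\cong\r{HH}^0(\mc{A})$, so the hypothesis $\r{rk}\ \r{HH}^0(\mc{A})\geq\r{rk}\ \r{QH}^0(X)_w$ forces equality of ranks and makes the injective map $\oc:\r{HH}_{-n}(\mc{A})\to\r{QH}^0(X)_w$ an isomorphism. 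In particular it hits $e_w$, and Abouzaid's monotone criterion applies. Your plan will work once you replace the attempted $\co$-injectivity argument with the observation that $\oc$ is injective for free and the rank inequality closes the gap to surjectivity.
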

\begin{rem}
    The rank inequality (\ref{rankinequality}) 
    appearing in Theorem \ref{thm:mainFano} is required when $c_1(X) = 0$ as
    well, but there it is automatically satisfied: the presence of a
    $\Z$-grading implies $\r{rk}\ QH^0(X) = \r{rk}\ H^0(X) = 1$, and
    $\mathrm{rk}\ \r{HH}^{0}(\mc{A}) \geq 1$ whenever $\mc{A}$ is non-empty
    (see Lemma \ref{lem:HHbigenough}). 
    
    When $c_1(X) \neq 0$, quantum cohomology is most generally only
    $\Z/2\Z$-graded, hence it is not necessarily true that $\mathrm{rk}\
    \r{QH}^0(X)_w = 1$ (for instance, if there is only one summand at $w=0$, and
    quantum cohomology is $\Z/2$-graded, then $\mathrm{rk}\ \r{QH}^0(X)_0 =
    \mathrm{rk} \ H^{even}(X)$).  It is easy to find examples in the setting of
    Theorem \ref{thm:mainFano} showing that the hypothesis (\ref{rankinequality})
    cannot be removed.
\end{rem}
The relevant axiomatic properties of Fukaya categories required to prove
Theorems \ref{thm:mainCY} and \ref{thm:mainFano} have been verified in
geometric settings where there are classical constructions available, for
instance for {\em relative Fukaya categories} of pairs $(X^{2n}, \mathbf{D})$
(where $\mathbf{D}$ is an ample simple normal crossings divisor representing
$PD([\omega])$), or for {\em monotone Fukaya categories} of monotone symplectic
manifolds. The proof of the most important (and newest) property we use, Theorem
\ref{thm:ocpairing} below, is a forthcoming joint result of the author, T.
Perutz and N. Sheridan \cite{GPS2:2015} and separately a result of
Abouzaid-Fukaya-Oh-Ohta-Ono \cite{afooo}; for completness we give a
self-contained proof in the technically simplest (i.e., monotone or tautologically
unobstructed) case in Appendix \ref{sec:pairing}.
It is the author's understanding that a version of all of these axiomatic properties for
general Fukaya categories, including Theorem \ref{thm:ocpairing}, for $\K =
\Lambda^{\C}$ the Novikov field over $\C$,
will appear in \cite{afooo}. Assuming this, there is also a version of our
automatic generation result for general Fukaya categories with identical proof;
see Remark \ref{rem:generalFukayaresult} for a formulation.\footnote{Remark
\ref{rem:cyclicity} discusses variations between the frameworks discussed here
and \cite{afooo} as regards {\em Calabi-Yau structures}, which don't affect our
main result.}

\begin{rem} 
In Theorem \ref{thm:mainCY}, the ground field $\K$ most generally needs to be a
version of the Novikov field.  However, for various restricted classes of
Lagrangians in restricted classes of $(X, \omega)$, one can take $\K$ to be a
smaller field. For example, $\K =   \C( ( q) )$ suffices for relative Fukaya
categories, or $\K =  \C$ or
$\F_p$ suffices in the monotone setting of Theorem \ref{thm:mainFano}; see \S
\ref{sec:geometry}. 
\end{rem}

\begin{rem}\label{rem:openmanifolds}
The methods described in this paper require the ambient symplectic manifold $X$
and its Lagrangians to be compact; specifically the structures used require:
\begin{itemize}
    \item finite rank Lagrangian Floer cohomology groups, which is a {\em properness}
condition,
\item a version of Poincar\'{e} duality for Floer cohomology between compact
Lagrangians, 
which is known as a 
{\em weak proper Calabi-Yau structure}, and 

\item a Poincar\'{e} duality non-degenerate self-pairing on the (quantum)
    cohomology of the total space. 
\end{itemize}
Forthcoming work will address related results in wrapped and Landau-Ginzburg
Fukaya categories (each of which have somewhat different hypotheses and are
consequences of rather different geometric structures).
\end{rem}

In a different direction, seeing as all of the geometric structures used have
formal counterparts in any suitable chain-level open-closed topological field theory (in the
vein of e.g., \cite{Costello:2006vn}), it follows that they --- and in particular Theorem
\ref{thm:mainCY} --- have mirror counterparts in algebraic geometry. 
The analogue of Theorem \ref{thm:mainCY} is following generation theorem in
algebraic geometry, which is well known to experts though seems to have no
explicit reference.
\begin{thm}[Folk, Theorem \ref{thmfolkrestate} below]\label{thm:agfolk}
    Suppose $Y$ is a connected smooth and proper Calabi-Yau variety (over $\C$
    for simplicity), and let  $\mc{A} \subset \r{Coh}(Y)$ be a full
    subcategory. If $\mc{A}$ is homologically smooth and non-empty, then
    $\mc{A}$ split-generates $\r{Coh}(Y)$.
\end{thm}
The proof of Theorem \ref{thm:mainCY} can readily be adapted (by considering
the commutative mirror counterparts of the Floer-theoretic structures invoked
here) to prove Theorem \ref{thm:agfolk}. However, there is an even
simpler argument using work of Tabuada \cite{Tabuada:2016aa} that was pointed
out to the author independently by Seidel and Tabuada; see Theorem
\ref{thmfolkrestate} below.

There is an analogous automatic generation theorem for categories of matrix
factorizations on a Calabi-Yau variety mirror to Theorem \ref{thm:mainFano}; we
leave the details to the reader.  

\subsection{Open-closed maps}\label{intro:oc}
Our argument makes essential use of, and has non-trivial consequences for,
geometric {\em open-closed} and {\em closed-open maps}, which relate the
Hochschild invariants of the Fukaya category to quantum cohomology. To discuss
these in a uniform setting for Calabi-Yau and monotone $X$, we temporarily
suppress the summand decompositions which occur in quantum cohomology and
Fukaya categories in the monotone case; see Remark \ref{rem:summanddecomp} for
how to put these back in.

Recall that there is a geometric {\em open-closed map} from the {\em Hochschild
homology} of the Fukaya category, denoted $\r{HH}_{*-n}(\mc{F}(X))$ to the {\em quantum cohomology} of $X$: 
\[
    \oc: \r{HH}_{*-n}(\mc{F}(X)) \ra \r{QH}^*(X)
\]
The importance of this map has highlighted by Abouzaid \cite{Abouzaid:2010kx},
who showed that any subcategory $\mc{A} \subset \mc{F}(X)$ for which
$\oc|_{\mc{A}} : \r{HH}_{-n}(\mc{A}) \ra \r{QH}^0(X)$ hits the unit $1 \in \r{QH}^0(X)$
in fact split-generates $\mc{F}(X)$ (this is reviewed in \S
\ref{sec:openclosed}). Our argument uses properties of open-closed maps and
Fukaya categories to show that, under the hypotheses as in Theorems
\ref{thm:mainCY}-\ref{thm:mainFano}, Abouzaid's criterion is satisfied.

Following \cite{ganatra1_arxiv}, we call any $\mc{A}$ satisfying Abouzaid's
criterion an {\it essential subcategory} (and say $X$ is {\em non-degenerate}
if it is has an essential collection).  In fact, whenever one can find an
essential subcategory $\mc{A}$, it is understood that one can entirely recover
the quantum cohomology ring:
\begin{thm}[\cite{ganatra1_arxiv} for the wrapped Fukaya category, 
    \cite{GPS1:2015}, \cite{afooo}] \label{thm:nondegenerateOC}
    If $\mc{F}(X)$ has an essential subcategory $\mc{A}$, then the
    open-closed and closed-open maps $\oc: \r{HH}_{*-n}(\mc{F}(X)) \cong \r{HH}_{*-n}(\mc{A}) \ra
    \r{QH}^*(X)$ and $\co: \r{QH}^*(X) \ra \r{HH}^*(\mc{F}(X)) \cong
    \r{HH}^*(\mc{A})$ are both isomorphisms.\footnote{The proof given in
    \cite{GPS1:2015} assumes $\mc{A}$ is smooth, which is sufficient for
    our purposes. But see Theorem \ref{thm:smoothness} and
    \cite{ganatra1_arxiv} for a review of how smoothness of $\mc{A}$ follows
    from essentiality.}
\end{thm}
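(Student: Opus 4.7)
The plan is to assemble three standard formal properties of Fukaya categories and open-closed/closed-open maps: (a) Abouzaid's generation criterion, (b) the $\r{QH}^*$-module structure on Hochschild invariants, compatible with $\oc$ and $\co$, and (c) the Cardy-type pairing identity of Theorem \ref{thm:ocpairing}. First, the essentiality hypothesis is precisely the condition required by Abouzaid's criterion, so $\mc{A}$ split-generates $\mc{F}(X)$, and Morita invariance of Hochschild invariants produces the claimed isomorphisms $\r{HH}_{*-n}(\mc{A}) \cong \r{HH}_{*-n}(\mc{F}(X))$ and $\r{HH}^*(\mc{A}) \cong \r{HH}^*(\mc{F}(X))$. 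It therefore suffices to prove $\oc$ and $\co$ are isomorphisms at the level of $\mc{A}$.

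To deduce surjectivity of $\oc$, fix $\xi_0 \in \r{HH}_{-n}(\mc{A})$ with $\oc(\xi_0) = 1 \in \r{QH}^0(X)$, which exists by essentiality. Since $\co$ is a unital ring map and $\oc$ is a map of $\r{HH}^*(\mc{A})$-modules — where the action on $\r{HH}_{*-n}$ is by cap product and on $\r{QH}^*(X)$ is via $\co$ followed by multiplication — we obtain, for any $\alpha \in \r{QH}^*(X)$,
\[
\oc\bigl(\co(\alpha)\cdot\xi_0\bigr) \;=\; \alpha \cdot \oc(\xi_0) \;=\; \alpha.
\]
Hence $\oc$ admits the section $\alpha \mapsto \co(\alpha)\cdot\xi_0$, which in particular forces $\co$ to be split injective and $\oc$ to be surjective.

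The remaining upgrade — surjectivity of $\co$, or equivalently injectivity of $\oc$ — will come from Theorem \ref{thm:ocpairing}, which asserts an adjunction
\[
\langle \co(\alpha),\,\xi\rangle_{\mathrm{Mukai}} \;=\; \langle \alpha,\,\oc(\xi)\rangle_{\r{QH}}
\]
between the non-degenerate Poincar\'e pairing on $\r{QH}^*(X)$ and the non-degenerate Mukai pairing between $\r{HH}^*(\mc{A})$ and $\r{HH}_{*-n}(\mc{A})$ (non-degeneracy of the latter being a consequence of the smoothness, properness, and weak proper Calabi-Yau structure in force). Under the duality identifications furnished by these two perfect pairings, $\co$ and $\oc$ become linear-algebra transposes of one another, so they have equal rank; in particular $\oc$ surjective recovers $\co$ injective, and the remaining equivalence $\co$ surjective $\Leftrightarrow$ $\oc$ injective is formal. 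Combined with the equality $\dim \r{HH}^*(\mc{A}) = \dim \r{HH}_{*-n}(\mc{A})$ (from the same Mukai-duality non-degeneracy) and the fact that both spaces contain a copy of $\r{QH}^*(X)$ that is retracted by the appropriate transpose, a rank count forces $\dim \r{QH}^*(X) = \dim \r{HH}^*(\mc{A}) = \dim \r{HH}_{*-n}(\mc{A})$, and both maps are bijections. The genuinely hard step is the pairing identity of Theorem \ref{thm:ocpairing}, which encodes all the geometric content; the remainder is bilinear algebra on finite-dimensional vector spaces.
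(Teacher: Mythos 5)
Your overall structure matches the paper's (Morita invariance from split-generation, surjectivity of $\oc$ from the module structure and the unit being hit, duality of $\oc$ and $\co$ closing the loop), but there is a genuine gap at the step that establishes \emph{injectivity} of $\oc$, and it stems from a misstatement of Theorem~\ref{thm:ocpairing}.

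You write Theorem~\ref{thm:ocpairing} as the adjunction $\langle \co(\alpha),\xi\rangle = \langle\alpha,\oc(\xi)\rangle$ against a "Mukai pairing between $\r{HH}^*(\mc{A})$ and $\r{HH}_{*-n}(\mc{A})$." That identity is not Theorem~\ref{thm:ocpairing}: it is the content of Proposition~\ref{occodual} combined with Lemma~\ref{hhweakcy}, where the pairing in question is the weak proper Calabi--Yau duality pairing $\r{HH}^*\otimes \r{HH}_{*}\to\K$ induced by $\phi$, not the Mukai pairing. The Mukai pairing of Proposition~\ref{mukainondeg} lives on $\r{HH}_{*}(\mc{A})\otimes\r{HH}_{*}(\mc{A})$ (both factors Hochschild \emph{homology}), and Theorem~\ref{thm:ocpairing} says $\oc$ is an isometry: $\langle\oc(\alpha),\oc(\beta)\rangle_X = \pm\langle\alpha,\beta\rangle_{Muk}$. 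Conflating these two distinct pairings is not merely a naming issue; it removes the input that actually forces injectivity.

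With only the transpose relation $\co=\oc^\vee$ (via wpCY duality), the facts you have established are: $\oc$ surjective, $\co$ injective, $\mathrm{rank}\,\oc=\mathrm{rank}\,\co$, $\dim\r{HH}^*=\dim\r{HH}_*$, and split copies of $\r{QH}^*(X)$ inside both $\r{HH}^*(\mc{A})$ and $\r{HH}_{*-n}(\mc{A})$. None of this excludes $\ker\oc\neq 0$: in abstract linear algebra, a surjection $A\to B$ whose transpose is $B\to A^\vee$ with $\dim A=\dim A^\vee$ can perfectly well have kernel. Your "rank count" is asserted but not supplied, and cannot be supplied from these ingredients alone. The step you are missing is exactly the one the paper isolates: by Proposition~\ref{mukainondeg} (smoothness $\Rightarrow$ the Mukai pairing on $\r{HH}_*(\mc{A})$ is non-degenerate) together with the actual isometry statement of Theorem~\ref{thm:ocpairing}, if $\oc(\alpha)=0$ then $\langle\alpha,\beta\rangle_{Muk}=\pm\langle\oc(\alpha),\oc(\beta)\rangle_X=0$ for all $\beta$, hence $\alpha=0$; so $\oc$ is injective. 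Combined with your (correct) surjectivity argument, $\oc$ is an isomorphism, and then $\co$ is an isomorphism because it is the dual of $\oc$ under the perfect pairings (Proposition~\ref{occodual} and Corollary~\ref{occorelated}). So the repair is small but essential: use Theorem~\ref{thm:ocpairing} in its actual form, against the genuine Mukai pairing on Hochschild homology, rather than the wpCY duality pairing.
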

Theorems \ref{thm:mainCY} and \ref{thm:mainFano} are proven by establishing
that the given category $\mc{A}$ is essential; hence using Theorem
\ref{thm:nondegenerateOC} they imply:
\begin{cor}\label{thm:ociso}
    Suppose the hypotheses of Theorems \ref{thm:mainCY} or \ref{thm:mainFano}
    are satisfied for a given subcategory $\mc{A}$ of the Fukaya category of
    $X$; namely  $\mc{A}$ is homologically smooth, non-empty, and satisfies a
    rank inequality on its Hochschild cohomology if $c_1(X) \neq 0$
    (see Theorem \ref{thm:mainFano}, (\ref{rankinequality})). 
    Then, the closed-open and open-closed maps induce
    isomorphisms \begin{equation}\r{HH}_{*-n}(\mc{A}) \cong \r{HH}_{*-n}(\mc{F}(X)) \cong
        \r{QH}^*(X) \cong \r{HH}^*(\mc{F}(X)) \cong \r{HH}^*(\mc{A}).\end{equation}
\end{cor}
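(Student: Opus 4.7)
The plan is to deduce the corollary by combining the split-generation conclusion of Theorems \ref{thm:mainCY}/\ref{thm:mainFano} with Theorem \ref{thm:nondegenerateOC}, together with the Morita-invariance of Hochschild invariants. First I would note that the proofs of Theorems \ref{thm:mainCY} and \ref{thm:mainFano} (as previewed in the introduction) proceed by verifying Abouzaid's criterion, i.e.\ by showing that $\oc|_{\mc{A}}: \r{HH}_{-n}(\mc{A}) \to \r{QH}^0(X)$ hits the unit $1 \in \r{QH}^0(X)$ (in the monotone case, the unit $1_w \in \r{QH}^0(X)_w$). In other words, $\mc{A}$ is an \emph{essential subcategory} in the sense of \cite{ganatra1_arxiv}.

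Given essentiality, Theorem \ref{thm:nondegenerateOC} applies directly and yields that the open-closed and closed-open maps
\[
    \oc: \r{HH}_{*-n}(\mc{F}(X)) \ra \r{QH}^*(X), \qquad \co: \r{QH}^*(X) \ra \r{HH}^*(\mc{F}(X))
\]
are isomorphisms, and moreover that the restrictions to $\mc{A}$ remain isomorphisms at the level of Hochschild invariants of the subcategory (this is the content of the identifications $\r{HH}_{*-n}(\mc{F}(X)) \cong \r{HH}_{*-n}(\mc{A})$ and $\r{HH}^*(\mc{F}(X)) \cong \r{HH}^*(\mc{A})$ appearing in the statement of that theorem).

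To justify the remaining identifications $\r{HH}_{*-n}(\mc{A}) \cong \r{HH}_{*-n}(\mc{F}(X))$ and $\r{HH}^*(\mc{A}) \cong \r{HH}^*(\mc{F}(X))$ independently, I would invoke Morita invariance: since Theorems \ref{thm:mainCY}/\ref{thm:mainFano} tell us that $\mc{A}$ split-generates $\mc{F}(X)$ (respectively $\mc{F}_w$), the inclusion $\mc{A} \hookrightarrow \mc{F}(X)$ induces an equivalence on split-closed derived (equivalently, on perfect module) categories, and Hochschild homology and cohomology are invariants of the split-closed (or pretriangulated) hull. This gives the desired isomorphisms on both Hochschild homology and cohomology, and these are compatible with the open-closed and closed-open maps by naturality. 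Chaining everything together produces the string of isomorphisms claimed in the corollary.

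The main subtlety, rather than a true obstacle, is bookkeeping in the monotone case: one must apply the above argument summand-by-summand over the eigenvalues $w$ of $c_1(X)\star-$, i.e.\ show that $\mc{A} \subset \mc{F}_w$ essential in $\mc{F}_w$ gives $\r{QH}^*(X)_w$ on the closed-string side, and then reassemble over $w$. This follows from the compatibility of $\oc$ and $\co$ with the eigenvalue decomposition, as noted in Remark \ref{rem:summanddecomp}; the rank inequality hypothesis (\ref{rankinequality}) from Theorem \ref{thm:mainFano} is precisely what is used (within the proofs of the main theorems) to force the relevant unit to lie in the image, so no additional input is needed here.
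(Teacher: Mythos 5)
Your proposal is correct, and in fact matches the logical outline the paper gives in the introduction (immediately after stating Theorem~\ref{thm:nondegenerateOC}, the paper says Theorems~\ref{thm:mainCY}--\ref{thm:mainFano} establish essentiality, ``hence using Theorem~\ref{thm:nondegenerateOC} they imply'' Corollary~\ref{thm:ociso}). Where you and the paper's written proof in \S\ref{sec:proof} diverge is that you cite Theorem~\ref{thm:nondegenerateOC} as a black box, whereas the paper ``for completeness'' recalls the argument behind that theorem from \cite{GPS1:2015}: it derives surjectivity of $\oc$ from the module-structure compatibility (Proposition~\ref{ocmodule}) once injectivity and unit-hitting are known, and then gets $\co$ an isomorphism from the $\oc$/$\co$ linear duality (Corollary~\ref{occorelated}), all using only the structures developed in the paper. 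Both routes are valid; yours is shorter but leans on an external result that the paper's own proof unwinds. One small redundancy in your write-up: you invoke the identifications $\r{HH}_{*-n}(\mc{F}(X)) \cong \r{HH}_{*-n}(\mc{A})$ once via Theorem~\ref{thm:nondegenerateOC} and again ``independently'' via Morita invariance of split-generation — these are the same justification, since the statement of Theorem~\ref{thm:nondegenerateOC} already subsumes the Morita identification, and essentiality implies split-generation by Theorem~\ref{thm:gencriterion} anyway.
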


\begin{rem}\label{rem:summanddecomp}
    When $X$ is monotone, the above Corollary should strictly speaking be taken in a given summand 
    corresponding to eigenvalues $w$ of $c_1(X) \star -$. Given such a
    $w$, a version of Abouzaid's generation criterion
    \cites{Ritter:2017aa, Sheridan:2016} says that if $\mc{A} \subset
    \mc{F}(X)_w$ and the open-closed map from $\r{HH}_{*-n}(\mc{A})$ hits the
    projection $1_w$ of the unit onto the generalized $w$-eigenspace
    $\r{QH}^*(X)_w$, then $\mc{A}$ split-generates $\mc{F}(X)_w$. Theorem
    \ref{thm:nondegenerateOC} applies with the same proof. Hence, Corollary
    \ref{thm:ociso}, under the hypothesis of Theorem \ref{thm:mainFano},
     concludes that the open-closed maps between the $w$-summand
     $\r{QH}^*(X)_w$ and the Hochschild invariants of $\mc{F}(X)_w$ resp.
     $\mc{A}$ are isomorphisms.
\end{rem}

\begin{rem}\label{rem:converse}
    In light of Remark \ref{smoothnessnecessary}, our results imply that the
    (smoothness, non-emptiness, size of $\r{HH}^0$) hypotheses on $\mc{A}$ in
    Theorem \ref{thm:mainCY} (respectively Theorem \ref{thm:mainFano} or Remark
    \ref{rem:generalFukayaresult}) are in fact {\em equivalent} to the essentiality of
    $\mc{A}$ (and further to open-closed and closed-open maps being isomorphisms).
\end{rem}

\subsection{Overview}

In \S \ref{sec:geometry}, we review the quantum cohomology ring and versions of
the Fukaya category of compact manifolds. In \S \ref{sec:algebra}, we recall
relevant algebraic structures, most notably the notions of split-generation and
homological smoothness, Hochschild invariants, weak Calabi-Yau structures, and
Shklyarov's categorical (Mukai-type) pairing on the Hochschild homology of any
proper category (the key fact about this pairing being {\em non-degenerate} for
homologically smooth categories is recalled in Proposition \ref{shklnondeg}).
\S \ref{sec:openclosed} reviews various geometric structures, such as
open-closed and closed open maps $\oc$ and $\co$, as well as relevant
properties they should satisfy: the linear
duality between $\oc$ and $\co$ in the presence of a weak proper Calabi-Yau
structure\footnote{While our argument does use in an essential way the weak
proper Calabi-Yau structure on the Fukaya category, our main Theorem does
not require $\oc$ and $\co$ to be linear duals; this is only used for
Corollary \ref{thm:ociso}.}, Abouzaid's generation criterion, and the
compatibility of open-closed maps with the Shklyarov pairing).
Combining these general structures with the homological algebra Lemma of
Shklyarov, stated in Proposition \ref{shklnondeg}, the proofs of the main
Theorems, given in \S \ref{sec:proof}, are very short.  In \S
\ref{sec:applications}, we indicate just a few applications of our result,
though we expect many more.

Appendix \ref{appendix:abouzaidimpliessmooth} gives a short (and different) proof of a
result from \cite{ganatra1_arxiv} establishing that $\mc{A}$ must necessarily be
homologically smooth in order to satisfy Abouzaid's criterion.  Appendix 
\ref{sec:pairing} gives a self-contained proof of Theorem \ref{thm:ocpairing}
(the compatibility of open-closed maps with pairings, due to \cite{GPS2:2015,
afooo}) in the in the simplest (monotone and tautologically unobstructed)
geometric cases.

\section*{Acknowledgments}
I am indebted to the work and ideas of (and helpful
conversations/collaborations with) T.  Perutz and N.  Sheridan, which
contributed in two essential ways to this note: first, with the broad idea, 
appearing in \cite{Perutz:2015aa} that there could be intrinsically checkable
conditions on Fukaya algebras which might sometimes, through the
structure of open-closed maps, imply generation, and second, with the specific
observation, appearing in the joint project \cite{GPS1:2015}, that homological
smoothness of a Fukaya algebra of compact Lagrangians in a compact symplectic
manifold implies injectivity of the open-closed map.  
I would like to thank K. Fukaya for explaining to me the result from
\cite{afooo} that distinct summands in general Fukaya categories map
orthogonally to quantum cohomology (which in turn helped clarify the generation
result in that setting, see Remark \ref{rem:generalFukayaresult}), M. Abouzaid
and A. Efimov for helpful conversations, and P.  Seidel and G. Tabuada for
independently pointing out to me the short(er) proof of Theorem
\ref{thm:agfolk} using Tabuada's work \cite{Tabuada:2016aa}.
I would also like to thank A. Hanlon and C. Woodward for corrections on
an earlier draft.

After the first draft of this paper was complete, I learned that a version of the main
results of this paper have been obtained independently by F. Sanda.

\section{The Fukaya category and quantum cohomology}\label{sec:geometry}
Let $(X^{2n},\omega)$ denote a symplectic manifold. We say $X$ is {\em (positively) monotone} if
$c_1(X) = \lambda [\omega]$ for some $\lambda > 0$ and {\em Calabi-Yau} if
$c_1(X) = 0$.
Let $\K$ denote a coefficient field, and let $G$ be an abelian group.

\subsection{Quantum cohomology}
The (small) quantum cohomology \cites{Ruan:1995aa, McDuff:2004aa} of $X$
\[(\r{QH}^*(X; \K), \star)\] is naturally a $\Z/2N$ graded ring, where $N$ is the
minimal Chern number of $X$ (when $c_1(X) = 0$, $\r{QH}^*(X; \K)$ is $\Z$-graded).
As a vector space $\r{QH}^*(X; \K)$ is equal to the cohomology of $X$ with its
grading collapsed.  Quantum cohomology comes equipped with its cohomological
non-degenerate integration pairing
\begin{equation}\label{eq:qhpairing}
    \begin{split}
        \langle - , - \rangle_X: \r{QH}^*(X; \K) \otimes \r{QH}^*(X; \K) &\ra \K\\
    \langle \alpha, \beta \rangle_X &:= \int_X \alpha \cup \beta;
    \end{split}
\end{equation}
the quantum product $\star$ can be described in terms of the corresponding `three-point functions'
\[
( \alpha, \beta, \gamma )_X := \langle \alpha \star \beta, \gamma \rangle_X,
\]
which are given by counts of rigid $J$-holomorphic spheres in $X$ with three
marked points constrained to lie on Poincar\'{e} dual cycles to $\alpha$,
$\beta$, and $\gamma$ respectively. Notably, a given rigid $J$-holomorphic
sphere $u$ is counted with weight
\begin{equation}
    t^{\omega(u)}
\end{equation}
for a chosen $t \in \K$. We will make a {\em well-definedness} and {\em
convergence assumption} that $(\K, t)$ are chosen so that
\begin{enumerate}
    \item (well-definedness) every $t^{\omega(u)}$ is an element of $\K$; and
    \item (convergence) for any triple $\alpha, \beta, \gamma$, the sum of
        $t^{\omega(u)}$ over all $J$-holomorphic spheres $u$ described above is
        an element of $\K$.
\end{enumerate}
Often the convergence hypothesis can only be guaranteed by picking $t$ to be a
formal variable in a Novikov-type field $\K$,
where then convergence is meant in the adic sense and holds as a consequence of
Gromov compactness.  Examples of $(\K, t)$ satisfying the well-definedness and
convergence assumptions depend on $X$; some examples include:
\begin{itemize}
    \item For any symplectic manifold, one can take $\K$ to be the Novikov
        field 
        \[
            \Lambda_{Nov}^{\C}:=  \{ \sum a_i q^{\lambda_i} | a_i \in \C, \lambda_i \in \R,\ \lim_{i \ra \infty} \lambda_i = +\infty \}
        \]
        and weight $t = q$.

    \item If $X$ is Calabi-Yau (or monotone), and $[\omega]$ is an {\em
        integral} symplectic form, one can take $\K = \C( ( q) )$ with $|q| =
        2c_1(X)$, and weight $t = q$.

    \item If $X$ is (positively) monotone, one can take $\K = \C$ (or $\K$
        another small field, such as $\Z_2$ or $\F_p$), and weight $t = 1$.
\end{itemize}
The count of constant spheres factors in a classical contribution $\int_X
\alpha \cup \beta \cup \gamma$; so quantum cohomology is a deformation of the
usual cohomology ring. The quantum cup product is associative.

When $X$ is not Calabi-Yau, quantum cup product with the first Chern class
$c_1(X) \star -$ gives an interesting degree zero endomorphism of the quantum
cohomology ring, inducing a generalized eigenspace decomposition. 

\subsection{The Fukaya category, schematically}
The Fukaya category, as most generally defined (for general Lagrangians in general symplectic manifolds)
\cites{Fukaya:2009ve, Fukaya:2009qf, afooo} associates to a compact symplectic manifold
$X$ a $\ainf$ category $\mc{F}(X)$ over a version of the Novikov field
$\K = \Lambda_{Nov}^{\C} := \{ \sum_{i} a_i q^{\lambda_i} | a_i \in
\C, \lambda_i \in \R, \lambda_i \ra \infty\}$. Its objects are {\em Lagrangian
branes} (meaning Lagrangians equipped with relative Spin structures, local
systems, and potentially grading data when $c_1(X) = 0$), its morphism spaces are
generated as graded $\K$ vector spaces by intersection points
between Lagrangians, and the differential (and $\ainf$ structure maps) $\mu^k:
\hom(L_{k-1}, L_k) \otimes \cdots \otimes \hom(L_0, L_1) \ra \hom(L_0,
L_k)$ are given by counts of rigid $J-$holomorphic discs $u$, each such disc
$u$'s count weighted by $t^{\omega(u)}$, where $t = q$ in this case. 

By restricting to special classes of Lagrangians in special classes of $(X,
\omega)$ (i.e., monotone Lagrangians in a monotone symplectic manifold), one
can frequently work over ``smaller''
pairs $(\K,t)$ (i.e., $(\C((q) ), q)$ or $(\C, 1)$) as in the case of quantum cohomology; all that is necessary is
once more {\em well-definedness} (each $t^{\omega(u)} \in \K$ for $u$ a disc
between Lagrangians in this restricted class) and {\em convergence} (each
$\ainf$ structure map, and all of the subsequent open-closed structure maps, give
convergent counts in $\K$).  Observe that even in a special (such as monotone)
symplectic manifold, it seems necessary to work with
with $\K = \Lambda_{Nov}^{\C}$ if one does not restrict the class of
Lagrangians.

We remark that generally $\mc{F}(X)$ is
a {\em curved $\ainf$ category}; there is an additional element $\mu^0_L \in
\hom(L,L)$ obstructing the differential $\mu^1_L$ squaring to zero; one can
pass to a genuine $\ainf$ category by considering {\em unobstructed} or {\em
weakly unobstructed} objects (or more generally, objects equipped with {\em weak bounding
co-chains}---see i.e., \cite{Fukaya:2010ac}).

As sketched above, the full Fukaya category requires a fair bit of analytic
{\em virtual} machinery to extract counts $\mu^k$ satisfying $\ainf$ relations,
due to the frequent inability to obtain transversely cut out moduli spaces.
Under suitable geometric hypotheses (many of which, such as monotonicity, overlap with the hypotheses
under which one can work with small $(\K, t)$), there are simpler, more
classical  methods of producing transversally cut out moduli spaces, and hence
the $\ainf$ category structure.  Our exposition will focus on two well
known such constructions, the {\em relative Fukaya category}
\cites{Seidel:2002ys, Seidel:2015ab, Sheridan:2015aa, PerutzSheridan:2015} of an integral symplectic
Calabi-Yau or monotone symplectic manifold (relative a given divisor), and another the {\em monotone
Fukaya category} of a monotone symplectic manifold
\cites{Oh:1993aa, Oh:1993ab, Oh:1995aa, Biran:2014aa, Sheridan:2016} (which in turn are
defined along the lines of exact Fukaya categories as in \cite{Seidel:2008zr}).
For a brief discussion of the general case, see Remark \ref{rem:generalFukayaresult}.

\subsection{The relative Fukaya category of a Calabi-Yau
manifold}\label{subsec:relfuk}

We sketch the definition of relative Fukaya categories as is given in the forthcoming
\cite{PerutzSheridan:2015}.  Let $X$ denote a closed integral symplectic
manifold, and $\mathbf{D}$ a simple normal crossings divisor representing
$[\omega]$. Fix a primitive $\alpha$ for the symplectic form $\omega$ on the
the complement $X \backslash \mathbf{D}$ which is {\em Liouville}, in that the
associated vector field $Z$ is outward pointing along a contact neighborhood of
$\mathbf{D}$.  Objects of the {\em relative Fukaya category of $(X,
\mathbf{D})$} are Lagrangian branes as before
which are exact in $X \backslash \mathbf{D}$, and moreover come equipped with
fixed primitives of $\alpha$. The $\ainf$ structure maps are defined as before,
using a special type of almost complex structure adapted to $\mathbf{D}$, with
each disc $u$ weighted by area $q^{\omega(u) - \alpha(\partial u)}$ (note that
this is now a positive {\em integral} weight).  In the special case that each component of
$\mathbf{D}$ is itself ample, studied in \cite{Sheridan:2015aa}, one counts
discs using an almost complex structure $J$ which preserves each component $D$,
and this weight simply records the (positive) intersection multiplicity of $u$
with $\mathbf{D}$. 
By the notation \[\mathcal{F}(X, \mathbf{D})\] we mean the {\it unobstructed}
(or even weakly unobstructed) sub-category of the Fukaya category; i.e., those
objects 
with $\mu^0$ central, so one can talk about Floer cohomology.

\begin{rem}
    More generally, one should consider the enlargement $\mathcal{F}_{mc}(X,
    \mathbf{D})$ whose objects consist of pairs $(L, b)$ of a potentially obstructed object $L$ equipped with
    a {\it weak bounding co-chains}, in the sense mentioned previously. 
    Our discussion applies verbatim to these larger categories, which also
    have well defined cohomological morphism spaces.
\end{rem}

\subsection{The monotone Fukaya category and monotone quantum cohomology}\label{monotoneFuk}
Suppose instead $X$ is a monotone symplectic manifold; specifically, say $[\omega] = 2\tau c_1$ for
$\tau > 0$.  In this case, the quantum cohomology $\r{QH}^*(X):=\r{QH}^*(X,\K)$ of $X$ with $\K$
coefficients is $\Z/2$ graded\footnote{Rather, it is $\Z/2N$ graded as
mentioned above, but we reduce gradings to $\Z/2$, or whichever $\Z/2k$ grading
we are putting on the Fukaya category.}, and as mentioned earlier the operator
$c_1(X) \star -$ induces a decomposition of $\r{QH}^*(X)$ into generalized
eigenspaces $\r{QH}^*_w(X)$, where $w \in \bar{\K}$ is an element of the
algebraic closure.

A Lagrangian $L\subset X$ is said to be {\em monotone} if there is a constant
$\rho > 0$ so that $\omega(-) = \rho \mu_L(-): H_2(M,L) \ra \R$, 
where $\omega(-)$ denotes symplectic area and $\mu_L(-)$ denotes the {\em
Maslov class}.
The {\em monotone Fukaya category} of $X$ is defined over 
$\C$ (or more generally other small fields $\K$ or rings, for instance fields
with finite characteristic), and has as objects monotone Lagrangians $L$ with
minimal Maslov number $\geq 2$ 
equipped as
before with {\em brane data}: relative Spin structures if
$\mathrm{char\ } \K \neq 2$, $\Z/2$ grading data, and $\C^*$ (resp.
$\bar{\K}^*$)-local system.
Objects can also more generally be equipped with {\it weak bounding co-chains}
(see \cite{Fukaya:2010ac} or \cite{Sheridan:2016} in the monotone case).

\begin{rem}
When the minimal Chern number of $X$ is $N > 2$, and $k$ is a number dividing
$N$, one can consider the $\Z/2k$ graded version of the above story with
objects Lagrangians with minimal Maslov number $\geq 2k$ equipped with $\Z/2k$
grading data.  Our results are identical in this case (with the caveat that
$\r{QH}^*(X)$ should also be thought of as $\Z/2k$ graded in Theorem
\ref{thm:mainFano}).
\end{rem}

Given a monotone Lagrangian with brane data, the {\it count of Maslov 2
($J$-holomorphic) discs} in $X$ with boundary on $L$ (potentially weighted by
the $\C^*$/$\bar{\K}^*$ local system) associates a numerical quantity
\[
    n_L \in \bar{\K}.
\]
For each $w \in \bar{\K}$, there is a summand of the monotone Fukaya
category 
\[
\mc{F}_w(X)\]
consisting of Lagrangians (or more generally idempotents of Lagrangians) with
$n_L = w$. Within each summand, the $\ainf$ structure is defined as before; the
main point in verifying the $\ainf$ equations in this case is that all
potentially problematic disc bubbles (which do not occur as codimension 1
boundary of higher $\ainf$ moduli spaces, but may obstruct $(\mu^1)^2 = 0$
\cites{Oh:1993aa, Oh:1995aa}) can be counted and in fact cancel.

A fundamental relationship between the categories $\mc{F}_w(X)$ and quantum
multiplication by $c_1(X)$, deducible from studying the {\it cap product}
action of $c_1(X) \in \r{QH}^*(X)$ on $HF^*(L,L)$ (or equivalently from a closed-open
map), valid only in monotone Fukaya categories, is
\begin{prop}[Auroux \cite{Auroux:2007aa}, Kontsevich, Seidel]
    $\mc{F}_w(X)$ is trivial unless $w$ is an eigenvalue of $c_1(X) \star -: \r{QH}^*(X, \K) \ra \r{QH}^*(X, \K)$.
\end{prop}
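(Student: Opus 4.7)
The plan is to use the closed-open map. For each object $L$ of $\mathcal{F}(X)$, the composition of $\co \colon \r{QH}^*(X) \to \r{HH}^*(\mathcal{F}(X))$ with the natural evaluation at $L$ yields a unital $\K$-algebra homomorphism $\co_L \colon \r{QH}^*(X) \to HF^*(L,L)$; this ring-homomorphism property is one of the standard axiomatic features of $\co$ reviewed later in the paper. The key geometric input is the identity
\[
\co_L(c_1(X)) \;=\; n_L \cdot 1_L,
\]
obtained by representing $c_1(X)$ by a pseudo-cycle Poincar\'e dual to the first Chern class and identifying the resulting $\co_L$-count with the weighted count of Maslov $2$ discs through that cycle; for a monotone Lagrangian this count equals $n_L$ by definition, and for $L \in \mathcal{F}_w(X)$ one has $n_L = w$.

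Next I would exploit the spectral decomposition of quantum multiplication by $c_1(X)$. On the finite-dimensional $\bar\K$-vector space $\r{QH}^*(X) \otimes_\K \bar\K$, the operator $c_1(X) \star -$ has minimal polynomial of the form $m(T) = \prod_i (T - v_i)^{n_i}$ with $v_i \in \bar\K$ its eigenvalues. By Lagrange interpolation (equivalently, the Chinese Remainder Theorem applied to $\bar\K[T]/m(T)$), one can choose a polynomial $p_w \in \bar\K[T]$ with $p_w(w) = 1$ and $p_w(v_i) = 0$ for all eigenvalues $v_i \ne w$; then $1_w := p_w(c_1(X)) \in \r{QH}^*(X) \otimes \bar\K$ is the idempotent projecting the unit onto the generalized $w$-eigenspace $\r{QH}^*(X)_w$. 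In particular $1_w = 0$ whenever $w$ is not an eigenvalue.

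Combining these two inputs, for $L \in \mathcal{F}_w(X)$ (after extending scalars to $\bar\K$) the ring-homomorphism property of $\co_L$ yields
\[
\co_L(1_w) \;=\; p_w(\co_L(c_1(X))) \;=\; p_w(w \cdot 1_L) \;=\; p_w(w) \cdot 1_L \;=\; 1_L,
\]
where the penultimate equality uses $(w \cdot 1_L)^k = w^k \cdot 1_L$. Now, if $w$ is not an eigenvalue of $c_1(X) \star -$, then $1_w = 0$ in $\r{QH}^*(X)$, forcing $1_L = 0$ in $HF^*(L,L)$; since the identity morphism vanishes, $L$ is a zero object. Applied to every $L \in \mathcal{F}_w(X)$, this shows the entire summand is trivial.

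The only geometrically nontrivial ingredient is the chain-level identity $\co_L(c_1(X)) = n_L \cdot 1_L$, which is the classical Auroux--Kontsevich--Seidel computation and is entirely standard in the construction of monotone Fukaya categories; every other step is formal, relying only on the ring-homomorphism property of $\co$ and elementary linear algebra over $\bar\K$. I do not anticipate any substantial obstacle beyond recording this closed-open map computation carefully.
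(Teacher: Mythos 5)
The paper itself gives no proof of this proposition; it attributes it to Auroux--Kontsevich--Seidel, remarks that it is ``deducible from studying the cap product action of $c_1(X)$ on $HF^*(L,L)$ (or equivalently from a closed-open map),'' and points to \cite{Sheridan:2016}*{Cor.\ 2.9} for an exposition. Your proof follows exactly that indicated route: the unital ring-homomorphism property of $\co_L$ together with the fundamental identity $\co_L(c_1(X)) = n_L \cdot 1_L$, which you correctly isolate as the only non-formal geometric input. So the overall strategy is right and matches what the paper points to.

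However, the spectral step as written would fail in general. Requiring $p_w(w)=1$ and $p_w(v_i)=0$ at the eigenvalues does \emph{not} force $p_w(c_1(X)) = 0$ in $\r{QH}^*(X)$: to conclude $p_w(c_1 \star -) = 0$ you need the minimal polynomial $m(T)=\prod_i (T-v_i)^{n_i}$ to divide $p_w$, i.e.\ $p_w$ must vanish to order $n_i$ at each $v_i$, not merely vanish there. When $c_1 \star -$ is not semisimple the Lagrange interpolant does not do this (e.g.\ if $m(T)=T^2$ and $w=1$, the interpolant $p_w(T)=T$ gives $p_w(c_1)=c_1\neq 0$); correspondingly, your parenthetical claim that $p_w(c_1(X))$ is the projection of the unit onto $\r{QH}^*(X)_w$ is false for the same reason, as the CRT idempotent must match higher-order data at the roots. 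The repair is immediate and actually shorter: since $w$ is not a root of $m$, one has $m(w)\neq 0$, and $m(c_1(X)) = 0$ by Cayley--Hamilton; applying the ring homomorphism $\co_L$ gives $0 = \co_L(m(c_1(X))) = m(\co_L(c_1(X))) = m(w)\cdot 1_L$, so $1_L = 0$ and $L$ is a zero object. With that substitution your argument is complete and correct.
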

See also \cite{Sheridan:2016} for an exposition of the above fact (Cor. 2.9 in
{\em loc. cit.}) and also for a more detailed description of the monotone
Fukaya category.

\section{Categorical structures}\label{sec:algebra}
\subsection{Bimodules and properness}
Given a pair of $\ainf$ categories $\cc$ and $\dd$, there is an associated dg
category of $\ainf$ $\cc\!-\!\dd$  bimodules\footnote{We will particularly
consider the case $\cc = \dd$.}, which we denote $\cc\!-\r{mod}\!-\dd$. This
category by now many references
\cites{Seidel:2008cr, Tradler:2008fk, ganatra1_arxiv,
Sheridan:2015ab}, so we will not provide explicit formulae for the objects and
morphisms of this category. Schematically, an $\ainf$ $\cc\!-\!\dd$ bimodule is
a `bilinear functor' $\mc{B}: \cc^{op} \times \dd \ra Chain_{\K}$ to chain complexes;
meaning, for every pair of objects $(V,W) \in \ob \cc \times \ob \dd$, there is
a chain complex $\mc{B}(V,W)$, along with `multiplication maps' coming from the
induced map on morphism spaces: 
\[
    \begin{split}
    \mu^{r|1|s}_{\mc{B}}:\bigg(\hom_{\cc}(V_{r-1},V_r) \otimes \cdots
    \otimes \hom_{\cc}(V_0,V_1) \bigg) & 
    \otimes \bigg(\hom_{\dd}(W_1,W_0) \otimes \cdots \otimes \hom_{\dd}(W_{s},W_{s-1}) \bigg)\\
    &\longrightarrow \hom_{\K}(\mc{B}(V_0,W_0) , \mc{B}(V_r,W_s)),
\end{split}
\]
satisfying equations coming from the $\ainf$ `bilinear functor' relations (see
\cite{Lyubashenko:2015aa} or more recently \cite{Sheridan:2015ab} for the
formal perspective using multilinear functors).
\begin{rem}
The main property of a bimodule we will study, {\em perfection}, is a Morita
invariant notion. Hence the reader should feel free to replace all instances of
`$\ainf$' by `dg' in the formal discussion (one caveat: when $\cc$ and $\dd$
are dg categories, the correct morphism spaces of dg bimodules in
$\cc\!-\r{mod}\!-\dd$ 
are the derived bimodule homomorphisms).
More precisely, any $\ainf$ category can be replaced by an equivalent, hence
Morita equivalent, dg category, and any $\ainf$ bimodule over a dg category can
be replaced by an equivalent dg bimodule (which is a bilinear dg functor to
chain complexes). 
    \end{rem}

Important examples of bimodules for our purposes include:
\begin{itemize}
    \item {\em Yoneda bimodules}: for a pair of objects $(K,L) \in \ob \cc
        \times \ob \dd$, the $K,L$ Yoneda bimodule, denoted $Y_{K,L}$, is the
        tensor product of the left Yoneda module over $K$ with the right Yoneda
        module over $L$, and associates the following chain complex, for $(A,B)
        \in \ob \cc \times \ob \dd$:
        \[
            Y_{K,L}(A,B):= \hom_{\cc}(K,A) \otimes \hom_{\dd}(B,L).
        \]
        Yoneda bimodules are the analog of {\em free bimodules} over
        categories (specifically, in the category of $A\!-\!B$ bimodules, where
        $A$ and $B$ are $\ainf$ algebras, the analogous bimodule is $A
        \otimes B^{op}$).

    \item the {\em diagonal bimodule} $\cc_{\Delta}$ is a $\cc\!-\!\cc$
        bimodule, which as a chain complex is
        \[
            \cc_{\Delta}(A,B) := \hom_{\cc}(B,A).
        \]
        In the case of an $\ainf$ algebra $A$, the diagonal bimodule
        $A_{\Delta}$ is $A$ thought of as a bimodule over itself.

    \item the {\em (linear) dual diagonal bimodule} $\cc_{\Delta}^{\vee}$ is a
        $\cc\!-\!\cc$ bimodule, which as a chain complex is
        \[
            \cc_{\Delta}^{\vee}(A, B) := \hom_{\cc}(B,A)^{\vee}
        \]
        (where the dual is taken with respect to the ground field $\K$).
\end{itemize}
It is natural to ask whether a bimodule, thought of as a bilinear functor,
actually takes values in the subcategory $perf(\K) \subset Chain_{\K}$ of chain
complexes with finite rank cohomology. 
\begin{defn}\label{def:bimodproper}
    A bimodule $\mc{B}$ is {\em proper} if for any pair of objects $(X,Y)$, the
    total cohomology $H^*(\mc{B}(X,Y))$ is finite rank over $\K$.
\end{defn}
\begin{defn}\label{def:proper}
    An $\ainf$ category $\cc$ is {\em proper} if the total cohomology
    $H^*(\hom_{\cc}(X,Y))$ is finite rank over $\K$, or equivalently if its
    diagonal bimodule $\cc_{\Delta}$ is proper.  
\end{defn}
It is easy to see that Fukaya categories of compact Lagrangians are always
proper; for instance, the Floer co-chain complex of a pair of transverse
compact Lagrangians $L_0, L_1$ is by definition a vector space whose dimension
is the (finite) set of intersection points of the pair.\footnote{When $L_0$ is
not transverse to $L_1$, $\hom_{\mc{F}(X)}(L_0, L_1)$ is at least
quasi-isomorphic to a chain complex generated by the finitely many
(transverse) intersection points of a perturbation $\tilde{L}_0$ with
$L_1$. In some
technical setups, this is in fact taken as the definition of
$\hom_{\mc{F}(X)}(L_0, L_1)$, for a suitable $\tilde{L}_0$.}

\subsection{Split-generation and perfect bimodules}
For any $\ainf$ category $\cc$, denote by $perf(\cc)$ the split-closed
pre-triangulated envelope of $\cc$. There are multiple ways of constructing
this envelope --- each of which comes equipped with a cohomologically full and
faithful embedding $\cc \hookrightarrow perf(\cc)$ --- but all choices are
quasi-equivalent (see
\cite{Seidel:2008zr}*{\S 4c}, where the notation $\Pi Tw(\cc)$ is often
favored; elsewhere $tw^{\pi} \cc$
sometimes appears\footnote{$tw$ or $Tw$ refers to a particular construction of
such an envelope known as {\em twisted complexes}, with $\pi/\Pi$
superscript or prefix to indicate the split-closure. The notation
$perf(\cc)$ references another construction, known as {\em perfect modules}.}).
For any full sub-category $\mc{X} \subset \cc$, there is a corresponding
cohomologically full and faithful embedding $perf(\mc{X}) \hookrightarrow
perf(\cc)$. We say {\em $\mc{X}$ split-generates $\cc$} if this embedding is a
quasi-equivalence.  Equivalently, in $H^0(perf( \cc))$, every object of $\cc$
should be isomorphic to an object of $perf(\mc{X})$, meaning each object admits
a homologically left invertible morphism into some (finite) complex of objects
from $\mc{X}$.

Applying this definition to categories of bimodules, we have: 
\begin{defn}\label{def:perfectbimodule}
    A $\cc\!-\!\cc$ bimodule $\mc{B}$ is {\em perfect} if, in the category of
    $\cc\!-\!\cc$ bimodules, it is split-generated by Yoneda bimodules.
\end{defn}

\subsection{Homological smoothness}\label{sec:smoothness}

We come to the main algebraic finiteness condition appearing in Theorems
\ref{thm:mainCY} and \ref{thm:mainFano}.
\begin{defn}[\cites{Kontsevich:1999aa, Kontsevich:2009ab}]\label{def:smooth}
    A category $\cc$ is \emph{(homologically) smooth} if its diagonal bimodule
    $\cc_{\Delta}$ is a perfect bimodule, in the sense of Definition
    \ref{def:perfectbimodule}.
\end{defn}

We say (for the purposes of this paper) that $\cc$ and $\dd$ are {\em Morita
equivalent} if there is a quasi-equivalence $perf(\cc) \cong perf(\dd)$ (for
instance, this holds whenever there is an embedding $\cc \subset \dd$ which
split-generates). Morita equivalent categories have quasi-equivalent bimodule
categories and in particular, notions of perfectness coincide, hence:
\begin{prop}\label{smoothmorita}
    Smoothness is a {\em Morita-invariant} notion; that is, if $\cc$ is
    homologically smooth, and $\dd$ is Morita equivalent to $\cc$, then
    $\dd$ is homologically smooth. In particular, if a full subcategory $\mc{X} \subset
    \mc{C}$ split-generates, then $\mc{X}$ is smooth if and only if $\mc{C}$
    is.\qed
\end{prop}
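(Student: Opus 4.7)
The plan is to deduce Morita invariance of smoothness from the observation that a Morita equivalence $perf(\cc) \simeq perf(\dd)$ induces a quasi-equivalence between the dg categories of bimodules which carries the diagonal bimodule to the diagonal bimodule and Yoneda bimodules to Yoneda bimodules (up to quasi-isomorphism). Since perfection of the diagonal bimodule is by definition its split-generation by Yoneda bimodules, and since a quasi-equivalence of dg categories preserves and reflects split-generation, Morita invariance of smoothness will follow immediately.  The second assertion is then automatic: if $\mc{X} \subset \mc{C}$ split-generates then, by definition, $perf(\mc{X}) \hookrightarrow perf(\mc{C})$ is a quasi-equivalence, so $\mc{X}$ and $\mc{C}$ are Morita equivalent and the first assertion applies.

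To carry this out, I would first reduce to the setting of an honest quasi-equivalence $F\colon \cc \xrightarrow{\sim} \dd$. For any $\ainf$ category $\cc$, the Yoneda embedding $\cc \hookrightarrow perf(\cc)$ induces a quasi-equivalence of dg bimodule categories under which $\cc_{\Delta}$ corresponds to $perf(\cc)_{\Delta}$ and each Yoneda bimodule $Y_{K,L}$ over $\cc$ corresponds to the Yoneda bimodule on the same pair viewed inside $perf(\cc)$.  Moreover, every $perf(\cc)$-Yoneda bimodule $Y^{perf(\cc)}_{K,L}$ is itself split-generated by Yoneda bimodules on objects of $\cc$, since each object of $perf(\cc)$ is a summand of a twisted complex of objects of $\cc$.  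Hence $\cc$ is smooth iff $perf(\cc)$ is, so without loss of generality we may assume $\cc$ and $\dd$ are already split-closed pre-triangulated and a Morita equivalence is given by an actual quasi-equivalence $F\colon \cc \to \dd$.

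Given such $F$, the pullback $F^{*}\mc{B} := \mc{B}(F(-),F(-))$ from bimodules over $\dd$ to bimodules over $\cc$ is itself a quasi-equivalence of dg categories, with quasi-inverse the pullback along any quasi-inverse $G$ of $F$ (together with the natural equivalences $FG \simeq \id$, $GF \simeq \id$).  Cohomological fully-faithfulness of $F$ yields quasi-isomorphisms $F^{*}\dd_{\Delta} \simeq \cc_{\Delta}$ and $F^{*}Y^{\dd}_{F(K),F(L)} \simeq Y^{\cc}_{K,L}$, and essential surjectivity ensures that every $\dd$-Yoneda bimodule is quasi-isomorphic to one of the form $Y^{\dd}_{F(K),F(L)}$.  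Thus $F^{*}$ bijectively matches the two classes of bimodules (diagonal and Yoneda) and, being a quasi-equivalence, preserves and reflects split-generation, so $\cc_{\Delta}$ is split-generated by $\cc$-Yoneda bimodules iff $\dd_{\Delta}$ is split-generated by $\dd$-Yoneda bimodules.

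The main technical obstacle is the preliminary claim that $\cc \hookrightarrow perf(\cc)$ induces a quasi-equivalence of bimodule categories --- equivalently, that every $perf(\cc)\!-\!perf(\cc)$ bimodule is quasi-isomorphic to one extended from a $\cc\!-\!\cc$ bimodule, and that this extension respects the $\Delta$ and Yoneda classes.  Once one commits to a model for bimodules this is a standard fact about extension/restriction of scalars for $\ainf$-bimodules along a Morita equivalence (developed, e.g., in \cite{Seidel:2008cr, Sheridan:2015ab}), and I would invoke it as a black box rather than reprove it; everything else in the argument is routine bookkeeping.
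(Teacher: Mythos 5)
Your proof is correct and follows essentially the same route the paper takes: the paper simply remarks, immediately before the Proposition, that ``Morita equivalent categories have quasi-equivalent bimodule categories and in particular, notions of perfectness coincide,'' and leaves the bookkeeping implicit, which is exactly what you flesh out. The only refinement worth noting is that restricting a $perf(\cc)$-Yoneda bimodule along $\cc \hookrightarrow perf(\cc)$ does not literally yield a $\cc$-Yoneda bimodule but only a perfect one, a point you handle correctly by observing that objects of $perf(\cc)$ are summands of twisted complexes over $\cc$.
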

A special case of this definition applies to the case of an ordinary
associative or dg algebra $A$ (which arises when $\cc$ has only one object $X$,
with $\mu^k = 0$ for $k \geq 3$,
by setting $A:= \hom_{\cc}(X,X)$).  In that case, $A$ is {\em homologically
smooth} if $A$ is split generated by $A \otimes A^{op}$ in the category of $A
\otimes A^{op}$ modules, e.g., $A$-bimodules. It is well known for instance
that if $A = R$ is a commutative ring over $\C$, then $A$ is smooth if and only
if $Y = Spec\ R$ is smooth in
the usual geometric sense. Proposition \ref{smoothmorita} implies that in fact
the (geometric) smoothness of $Y$ is equivalent to the smoothness of any Morita
equivalent algebra $A'$, such as the algebra of $n\times n$ matrices
$Mat_{n\times n}(R)$. 

Besides directly verifying the definition of smoothness, there are frequently
geometric and topological means of recognizing homologically smooth algebras
and categories, for instance:
\begin{itemize}
    \item As mentioned above, it is well known that the coordinate ring of an
        affine variety $\C[Y]$ is smooth if and only if $Y$ is geometrically
        smooth (see e.g., \cite{Kontsevich:2009ab}*{Ex. 8.4a}).
        
    \item  More generally, the category of perfect complexes   $Perf(Y)$ on any variety $Y$ is homologically smooth if and only if $Y$ is geometrically smooth \cite{Lunts:2010aa}.

    \item The category of coherent complexes $Coh(Y)$ on a variety $Y$ is always homologically smooth\footnote{Note that $Coh(Y) \cong Perf(Y)$ if and only if $Y$ is smooth.} \cite{Lunts:2010aa}.

    \item Matrix factorization categories $MF(Y,W)$ are always homologically smooth \cites{Dyckerhoff:2011aa, Preygel:2011aa, Lin:2013aa}.

    \item The singular chains on the based loop space  of a space $X$,
         $C_{-*}(\Omega X)$ (cohomologically graded by our conventions), is a dg
        algebra\footnote{This is an
        $\ainf$ rather than dg algebra if one doesn't use {\em Moore
        loops}, but the discussion still applies.} with composition induced by
        concatenation of loops. $C_{-*}(\Omega X)$ is homologically smooth
        whenever $X$ is homotopy equivalent to a finite CW complex (see
        \cite{Felix:1995aa}*{Proposition 5.3} or the more recent
        \cite{Abbaspour:2015aa}*{\S 3.2}).

\end{itemize}

\subsection{Hochschild invariants}
To a (small) cohomologically unital $\ainf$ category $\cc$ over a field $\K$, a
standard purely algebraic construction associates {\it Hochschild homology}
\[
    \r{HH}_*(\cc)
\]
and {\it Hochschild cohomology}
\[
    \r{HH}^*(\cc)
\] 
groups, both Morita invariants of $\cc$ (implicitly over $\K$). The Hochschild
cohomology $\r{HH}^*(\cc)$ is a unital ring
and Hochschild homology $\r{HH}_*(\cc)$ is a module 
over $\r{HH}^*(\cc)$.\footnote{Moreover, there are Gerstenhaber algebra/module
structures, as well as other {\em non-commutative calculus} structures not discussed
here.} We adopt the convention that both $\r{HH}_*(\cc)$ and $\r{HH}^*(\cc)$
are cohomologically graded, so that the product and module
structure maps are degree zero maps.  Explicit formulae for these algebraic
constructions can be found in many places, see i.e., \cites{Seidel:2008cr,
ganatra1_arxiv,Sheridan:2015ab}. Hochschild homology is functorial in $\cc$, meaning
that an $\ainf$ functor $F: \cc \ra \dd$ (or more generally a perfect
$\cc\!-\!\dd$ bimodule) induces a pushforward map $F_*: \r{HH}_*(\cc) \ra
\r{HH}_*(\dd)$. For dg categories, Hochschild homology satisfies a K\"{u}nneth
formulae: $\r{HH}_*(\cc) \otimes \r{HH}_*(\dd)
\stackrel{\sim}{\ra} \r{HH}_*(\cc \otimes \dd)$ (a similar statement can be
made for $\ainf$ categories, modulo a discussion the notion of tensor product for
$\ainf$ categories).

Both groups are special cases of a construction which associates, to a bimodule
$\mc{B}$ over $\cc$, Hochschild homology and cohomology groups $\r{HH}_*(\cc,
\mc{B})$, $\r{HH}^*(\cc, \mc{B})$ (we are using the shorthand $\r{HH}_*(\cc) :=
\r{HH}_*(\cc, \cc_{\Delta})$ and $\r{HH}^*(\cc):= \r{HH}^*(\cc,
\cc_{\Delta})$). Even more generally, they can be thought of as (cohomological)
morphisms and/or tensor products associated to the category of $\cc\!-\!\cc$
bimodules. For instance, we have isomorphisms $\r{HH}^*(\cc, \mc{B}) \cong
H^*(\hom_{\cc\!-\!\cc}(\cc_{\Delta}, \mc{B}))$ and $\r{HH}_*(\cc, \mc{B}) \cong
H^*(\cc_{\Delta} \otimes_{\cc\!-\!\cc}^{ \mathbb L} \mc{B})$.

\subsection{Weak proper Calabi-Yau structures}\label{sec:wpCY}
Let $\cc$ be a {\em proper} $\ainf$ category, in the sense of Definition
\ref{def:proper}. A {\em weak proper Calabi-Yau (wpCY) structure of
dimension $n$} on $\cc$ is a quasi-isomorphism of bimodules
\begin{equation}\label{wpcy}
    \cc_{\Delta} \stackrel{\sim}{\lra} \cc^{\vee}[-n].
\end{equation}
Roughly speaking \eqref{wpcy} is the data of a chain-level map realizing a
Poincar\'{e} duality type isomorphism $H^*\hom_\cc(X,Y) \cong
H^{n-*}\hom_{\cc}(Y,X)$, along with a family of chosen higher homotopies
realizing the homotopy-compatibility of this quasi-isomorphism with $\ainf$
multiplications. 

A wpCY structure induces a linear duality between certain Hochschild
invariants:  
\begin{lem}\label{hhweakcy}
    If $\cc$ has a weak proper Calabi-Yau structure of dimension $n$, then
    $\r{HH}_{n-*}(\cc)^{\vee} \cong \r{HH}^*(\cc)$.
\end{lem}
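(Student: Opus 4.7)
The plan is to feed the wpCY quasi-isomorphism $\cc_{\Delta} \stackrel{\sim}{\lra} \cc^{\vee}_{\Delta}[-n]$ through the bifunctor $\r{HH}^*(\cc, -)$ in its second argument, and then recognize the resulting object $\r{HH}^*(\cc, \cc^{\vee}_{\Delta})$ as linearly dual to Hochschild homology via a derived tensor-hom adjunction over the ground field.

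More precisely, the first step is to note that $\r{HH}^*(\cc, -)$, viewed as $H^*\hom_{\cc\text{-}\cc}(\cc_{\Delta}, -)$, sends the wpCY quasi-isomorphism of bimodules to an isomorphism
\[
\r{HH}^*(\cc) \;=\; \r{HH}^*(\cc, \cc_{\Delta}) \;\cong\; \r{HH}^*(\cc, \cc^{\vee}_{\Delta}[-n]) \;=\; \r{HH}^{*-n}(\cc, \cc^{\vee}_{\Delta}),
\]
where the last equality is just the effect of the shift on the grading of morphism complexes.

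The second step, which is the main algebraic content, is the derived adjunction identity
\[
\r{HH}^*(\cc, \mc{B}^{\vee}) \;\cong\; \r{HH}_{-*}(\cc, \mc{B})^{\vee}
\]
for any $\cc$-bimodule $\mc{B}$. One proves this by writing both sides as derived functors associated to the category of $\cc\text{-}\cc$ bimodules and applying the Hom-tensor adjunction with the ground field: at the chain level,
\[
\hom_{\cc\text{-}\cc}(\cc_{\Delta}, \mc{B}^{\vee}) \;\cong\; \hom_{\K}\!\bigl(\cc_{\Delta} \otimes^{\mathbb L}_{\cc\text{-}\cc} \mc{B},\; \K\bigr) \;=\; \bigl(\cc_{\Delta} \otimes^{\mathbb L}_{\cc\text{-}\cc} \mc{B}\bigr)^{\vee},
\]
using that $(-)^{\vee}$ is exact on finite-rank complexes; passing to cohomology gives the claimed identification. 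Properness of $\cc$ (in particular, finiteness of the cohomology of $\mc{B} = \cc_{\Delta}$ on objects) ensures that linear duality is well-behaved at the level of cohomology, so that $H^*(-^{\vee}) = H^{-*}(-)^{\vee}$.

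Combining the two steps with $\mc{B} = \cc_{\Delta}$ yields
\[
\r{HH}^*(\cc) \;\cong\; \r{HH}^{*-n}(\cc, \cc^{\vee}_{\Delta}) \;\cong\; \r{HH}_{-(*-n)}(\cc)^{\vee} \;=\; \r{HH}_{n-*}(\cc)^{\vee},
\]
as desired. The only thing to be careful with is bookkeeping of degrees and the passage from chain-level duality to cohomology-level duality; the main obstacle, such as it is, is verifying that the tensor-hom adjunction lifts to the derived category of bimodules (which is standard, and in the setting here is assured by the properness hypothesis on $\cc$ together with the freedom to resolve $\cc_{\Delta}$ by suitable bar-type resolutions).
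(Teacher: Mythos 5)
Your proof is correct and uses exactly the two ingredients in the paper's proof — applying $\r{HH}^*(\cc,-)$ to the wpCY quasi-isomorphism, and the canonical identification $\r{HH}^{*-n}(\cc,\cc^{\vee}) \cong \r{HH}_{n-*}(\cc)^{\vee}$ via tensor-hom adjunction with $\K$ — just presented in the opposite order and with the adjunction step spelled out more explicitly. (A tiny aside: properness isn't actually needed for $H^*((-)^{\vee}) \cong H^{-*}(-)^{\vee}$ over a field; that identity holds unconditionally.)
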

\begin{proof}
    For any $\cc$ there are canonical identifications  $\r{HH}^{*-n}(\cc,
    \cc^{\vee}) \cong \r{HH}_{n-*}(\cc, \cc_{\Delta})^{\vee} :=
    \r{HH}_{n-*}(\cc)^{\vee}$ (the isomorphism can even be realized on the
    level of chain complexes). Next, the wpCY structure induces an isomorphism
    $\r{HH}^{*-n}(\cc, \cc^{\vee}) \cong\r{HH}^*(\cc, \cc_{\Delta}):=
    \r{HH}^*(\cc)$.
\end{proof}

\subsection{The Shklyarov pairing}
Associated to any proper $\ainf$ category $\cc$ over a field $\K$ is a canonical
pairing on its Hochschild homology, called the {\em Shklyarov pairing}:
\begin{equation}\label{shklyarovpairing}
    \langle - , - \rangle_{Shk}: \r{HH}_*(\cc) \otimes \r{HH}_*(\cc) \ra \K.
\end{equation}
(this is
also sometimes called the {\em Mukai pairing}, as, on a proper variety $X$, it
has been proven \cite{Ramadoss:2008aa} that this pairing on $\r{HH}_*(perf(X))$
coincides with the Mukai pairing on $\r{HH}_*(X)$ \cite{Caldararu:2010aa}).
Roughly, any {\em proper} bimodule $\mc{B}$ (see Def. \ref{def:bimodproper}),
which is a bilinear functor $\cc^{op} \times \cc \ra perf(\K) \subset
Chain_{\K}$, induces a map
$\mc{B}_*: \r{HH}_*(\cc^{op}) \otimes \r{HH}_*(\cc) \ra  \r{HH}_*(\cc^{op} \otimes
\cc) \stackrel{\mc{B}}{\ra} \r{HH}_*(perf(\K)) \stackrel{\sim}{\ra} \K$; the
Shkylyarov pairing is the composition of this map with the natural isomorphism
$\r{HH}_*(\cc^{op}) \cong \r{HH}_*(\cc)$.  When $\cc$ is proper, the diagonal
bimodule $\cc_{\Delta}$ is proper, and the Shklyarov pairing $\langle -, -
\rangle_{Shk}$ is then defined as $(\cc_{\Delta})_*$. For the general theory
above (specifically involving expressions like ``$\cc^{op} \otimes \cc$'') to work
most simply, we can assume that $\cc$ is a dg category, as the resulting
pairing is independent of the quasi-equivalence class of $\cc$ and any $\ainf$
category is quasi-equivalent to a dg category. There are direct methods of
defining this pairing when $\cc$ is an $\ainf$ category, using the notion of
{\em multilinear $\ainf$ functors} \cite{Lyubashenko:2015aa} to resolve issues
with tensor
products. This leads to compact formulae for the pairing for an $\ainf$
category with finite dimensional chain-level morphism spaces;  see
\cite{Sheridan:2015ab} 
(and also work of Abouzaid-Fukaya-Oh-Ohta-Ono \cite{afooo}).

The key consequence of smoothness of $\cc$ is the following result of Shklyarov (we draw
upon the concise discussion in \cite{SeidelEquivariant}*{Lecture 8}).
\begin{prop}[\cite{Shklyarov:2013aa}, where the result is attributed to
    Kontsevich-Soibelman]
    \label{shklnondeg}
If $\cc$ is smooth and proper, then its Hochschild homology is finite rank and
the Shklyarov pairing is a non-degenerate pairing.  
\end{prop}
\begin{proof}[Sketch of proof]
    The main idea is to show that for any perfect bimodule $\mc{B}$, there is a `Chern
    character' map:
    \[
        CH_{\mc{B}}: \K \ra \r{HH}_*(\cc) \otimes \r{HH}_*(\cc^{op}),
    \]
    such that when $\mc{B}$ is also proper, the composition
    \begin{equation}\label{shk:composition}
        \r{HH}_*(\cc) \stackrel{CH_{\mc{B}}(1) \otimes id}{\lra} \r{HH}_*(\cc) \otimes \r{HH}_*(\cc^{op}) \otimes \r{HH}_*(\cc) \stackrel{id \otimes \langle -, - \rangle_{Shk}}{\lra} \r{HH}_*(\cc)
    \end{equation}
    is simply the functoriality map on Hochschild homology induced by the
    convolution functor $- \otimes_{\cc} \mc{B}: perf(\cc) \ra perf(\cc)$
    (using the Morita invariance property $\r{HH}_*(\cc) \cong
    \r{HH}_*(perf(\cc))$. When $\cc$ is smooth and proper, the diagonal
    bimodule $\mc{B} = \cc_{\Delta}$ is perfect and proper, hence the above
    applies. In this case, the overall composition \eqref{shk:composition} is
    the identity and the second map is
    (up to the identification $\r{HH}_*(\cc) \cong \r{HH}_*(\cc^{op})$) $id
    \otimes \langle -, - \rangle_{Shk}$, which immediately implies that
    $\r{HH}_*(\cc)$ is finite dimensional and that $\langle -, - \rangle_{Shk}$
    is non-degenerate on the left.\footnote{This argument is an instance of the
    {\em Snake relation} in topological field theory.}
    To clarify, letting $\sigma = CH_{\cc_{\Delta}}(1) =  \sum_{i=1}^k \alpha_i
    \otimes \beta_i$ thought of as an element of $\r{HH}_*(\cc) \otimes
    \r{HH}_*(\cc)$,
    the above implies that, for any $v \in \r{HH}_*(\cc)$,
    \[
        v = \sum_{i=1}^k \alpha_i \cdot \langle \beta_i, v \rangle_{Shk}.
    \]
    A similar argument on the right or an appeal to symmetry of $\langle - , -
    \rangle_{Shk}$ when $\cc$ is weak Calabi-Yau (see i.e.,
    \cite{Sheridan:2015ab}*{Lem.  5.43}) implies non-degeneracy on the right.

    Finally, the map $CH_{\mc{B}}$ exists for formal reasons: any
    object of a category $\dd$ determines a functor $\K \ra \dd$ and hence a
    map $\K \stackrel{\cong}{\ra} \r{HH}_*(\K) \ra \r{HH}_*(\dd)$. A similar
    construction associates an element of $\r{HH}_*(\dd)$ to any object of
    $perf(\dd)$ using the fact that $\r{HH}_*(perf(\dd)) \cong \r{HH}_*(\dd)$.
    To apply this to our situation, note that the category of perfect bimodules
    over $\cc$ is quasi-isomorphic to $perf(\cc \otimes \cc^{op})$;
    in turn, there is a K\"{u}nneth formula $\r{HH}_*(\cc \otimes \cc^{op})
    \cong \r{HH}_*(\cc) \otimes \r{HH}_*(\cc^{op})$ (again, for simplicity,
    this discussion assumes $\cc$ is dg so there are no issues with tensor
    products).
\end{proof}
\section{Geometric structures}\label{sec:openclosed}
In this section, we recall some formal structures possessed by the Fukaya
category and quantum cohomology, primarily concerning {\em
open-closed maps} relating the Hochschild homology and cohomology of the Fukaya
category with quantum cohomology.
The relevant structures
have been established in a number of settings. and in other technical or
geometric setups could be viewed as axiomatic requirements for our result to
hold.  These structures have formal analogues in open-closed topological
field theory  (in the sense of \cite{Costello:2006vn}), and  can be thought of
as a partial axiomatization the open-closed theory expected
to govern Fukaya categories and quantum cohomology.
As usual we
suppress the summand decompositions that occur in the monotone/non-Calabi-Yau
case from the general discussion (except to highlight differences). So when $X$
is monotone, all instances of $\r{QH}^*(X)$ should be replaced with
$\r{QH}^*(X)_w$, and instances of $\mc{F}(X)$ should be replaced with
$\mc{F}_w(X)$.

\subsection{Weak proper Calabi-Yau structure}\label{sec:fukayawpCY}

The Fukaya category carries a canonical weak proper Calabi-Yau structure, in
the sense of \S\ref{sec:wpCY}. 
 Specifically, the wpCY structure gives a first-order chain-level refinement of
 the Poincar\'{e} duality pairing on Lagrangian Floer cohomology; 
\begin{equation}\label{pdfloer}
    HF^*(L_0, L_1) \cong HF^*(L_1,L_0)^{\vee}[-n].
\end{equation} 
On the chain level, the map $HF^*(L_0, L_1) \otimes HF^*(L_1, L_0) \ra \K[-n]$
comes from counts of (unstable) $J$-holomorphic strips with two inputs, or
equivalently, from counts of $J$-holomorphic discs with two boundary marked
points and one unconstrained interior marked point with fixed cross ratio. The
higher order data in this structure has a similar definition (see
\cite{Seidel:2008zr}*{(12j)}, \cite{Seidel:2010aa}*{Proof of Prop. 5.1} and
more recently, \cite{Sheridan:2016}*{\S 2.8}).

It follows from Lemma \ref{hhweakcy} that on the Fukaya category $\mc{F}$ (or any summand in the monotone
case), there are canonical isomorphisms $\r{HH}_{n-*}(\mc{F})^{\vee} \cong
\r{HH}^*(\mc{F})$.

\begin{rem} \label{rem:cyclicity} 
    There are two possible further refinements of a weak proper Calabi-Yau
    structure. 
    In \cites{Fukaya:2010aa, afooo}, a version of the Fukaya category is
    constructed (for fields $\K$ containing $\R$) which is a {\em (strictly)
    cyclic $\ainf$ category}, meaning that certain correlation functions
    $\langle \mu^k(-, \cdots, -), - \rangle$ are graded cyclically symmetric
    on the chain level, for some perfect pairing $\langle -, -
    \rangle: \hom_{\mc{F}}^*(X, Y) \otimes \hom_{\mc{F}}^{*}(Y, X) \ra \K[-n]$.
    Strictly cyclic $\ainf$ categories in particular possess weak proper
    Calabi-Yau structures (induced by $\langle - , - \rangle$), so our
    arguments still apply.

    Over more general $\K$, such as cases in which the Fukaya category can be
    defined over a field or ring of finite characteristic, it may not be possible
    to guarantee the wpCY structure comes from strictly cyclic $\ainf$
    structure. Instead one can show it is ``cyclic up to homotopy''. More
    precisely, the Fukaya category can be equipped with a {\em (strong) proper
    Calabi-Yau structure} (\cite{Gcircleactions}, c.f. the discussion in
    \cite{GPS1:2015}*{\S 6.2}), in the sense of Kontsevich-Soibelman
    \cite{Kontsevich:2009ab}. Over a field of characteristic zero, a proper
    Calabi-Yau structure determines a quasi-isomorphism to a unique
    (isomorphism class) of cyclic $\ainf$ category
    \cite{Kontsevich:2009ab}*{Thm. 10.7}, so in that case the notions are
    essentially equivalent.
\end{rem}

\subsection{The open-closed and closed-open maps}
There is an open-closed map from the Hochschild homology of the Fukaya category
to quantum cohomology
\begin{equation}\label{eq:oc}
    \oc: \r{HH}_{*-n}(\mc{F}) \ra \r{QH}^*(X).
\end{equation}
There is a complementary closed-open map, from the quantum cohomology of $X$ to
the Hochschild cohomology of the Fukaya category:
\begin{equation}\label{eq:co}
    \co: \r{QH}^*(X) \ra \r{HH}^{*}(\mc{F}).
\end{equation}
\begin{rem}
    In the monotone case, it is known by work of Ritter and Smith
    \cite{Ritter:2017aa} (see also \cite{Sheridan:2016}) that both $\oc$ and
    $\co$ respect eigen-summand summand decompositions; namely $\co$
    restricted to $\r{QH}^*(X)_w$ is zero except for the portion mapping to
    $\r{HH}^*(\mc{F}_w(X))$ and $\oc$ from $\r{HH}_{*-n}(\mc{F}_w(X))$ has
    image in $\r{QH}^*(X)_w$.
\end{rem}
In either case, the maps are associated to counts of discs with boundary marked
points and one interior marked point, constrained to go through a (pseudo- or
Morse) cycle in the ambient manifold. In \eqref{eq:oc}, the interior marked
point is the sole output, whereas in \eqref{eq:co}, one of the boundary marked
points is the sole output.  It may seem from this that the structure
coefficients of either map should be therefore identical on the chain level;
however, recall that to define counts
one has to potentially perturb the defining equations in a coherent way
compatible with boundary strata operations. These perturbation schemes and their
compatibility conditions are in general quite different for $\oc$ and $\co$.
Instead, for a compact manifold, $\oc$ and $\co$ can be related cohomologically
as follows.

First, dualizing $\oc$, one obtains a map
\begin{equation}\label{eq:ocvee}
    \begin{split}
        \oc^{\vee}: \r{QH}^*(X)^{\vee} &\ra \r{HH}_{*-n}(\mc{F})^{\vee}\\
        \alpha &\mapsto \langle \alpha, \oc( - ) \rangle_{X}.
    \end{split}
\end{equation}
Using the pairing in quantum cohomology to map $\r{QH}^{*-2n}(X) \ra
\r{QH}^*(X)^{\vee}$, we obtain a map: 
\begin{equation}\label{eq:ocvee1}
    \begin{split}
        \oc^{\vee}_1: \r{QH}^{*}(X) &\ra \r{HH}_{*+n}(\mc{F})^{\vee}\\
        \oc^{\vee}_1(\alpha) &:= \oc^\vee \circ \langle \alpha, - \rangle := \langle \alpha, \oc(-) \rangle
\end{split}
\end{equation}
Since $\mc{F}$ has a weak proper Calabi-Yau structure, Lemma \ref{hhweakcy} provides an
isomorphism $\phi: \r{HH}_{*+n}(\mc{F})^{\vee} \ra \r{HH}^*(\mc{F})$. The desired relationship, which roughly comes from a ``deformation of
perturbation data argument," is then:
\begin{prop}[\cites{afooo, Sheridan:2016, PerutzSheridan:2015}]\label{occodual}
    As cohomology level maps, $\phi \circ \oc^{\vee}_1 \cong \co: QH^*(X) \ra
    \r{HH}^*(\mc{F})$. 
\end{prop}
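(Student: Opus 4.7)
The plan is to compare both sides cohomologically by realizing them as counts arising from a common family of moduli spaces of bordered Riemann surfaces with one distinguished interior marked point carrying the class $\alpha$.

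First, I would unwind the definitions at the chain level. Evaluated on boundary inputs $x_k, \dots, x_1 \in \hom_{\mc{F}}(L_{i-1}, L_i)$, the cochain $\co(\alpha) \in CC^*(\mc{F})$ is by construction a signed count of rigid $J$-holomorphic discs with $k+1$ boundary marked points (one output, $k$ inputs labeled by the $x_i$) and one interior marked point constrained to a (pseudo)cycle Poincar\'{e} dual to $\alpha$. On the other hand, $\oc_1^{\vee}(\alpha)$ sends a Hochschild chain $[x_k|\cdots|x_1|x_0]$ to the scalar $\langle \alpha, \oc(x_k,\dots,x_0)\rangle_X$, which unpacks as a count of discs with $k+1$ boundary inputs and one interior marked point; the latter is then paired with the cycle dual to $\alpha$ using the quantum intersection pairing. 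Finally, applying $\phi$ from Lemma \ref{hhweakcy} repackages this as a Hochschild cochain using the chain-level wpCY pairing, which is itself defined by counts of discs with two boundary marked points and a fixed unconstrained interior marked point.

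The key step is then to produce a parametrized (one-parameter) moduli space $\mathcal{M}_\alpha$ of discs with $k+1$ boundary marked points and one interior marked point labeled by $\alpha$, where the parameter interpolates between the configuration defining $\co(\alpha)$ (output as a boundary marked point, $\alpha$-cycle on a free interior point) and the configuration defining $\phi \circ \oc_1^{\vee}(\alpha)$ (the interior marked point and the pairing-auxiliary boundary marked point both placed in a limiting position that produces a broken disc carrying the wpCY pairing and the quantum intersection pairing). Concretely, one can realize this by varying the cross-ratio between the interior marked point and a distinguished pair of boundary marked points; at one extreme, a gluing/degeneration decomposes the disc into the $\oc$ configuration glued through a sphere to the quantum pairing, then reattached via a thin strip computing the wpCY pairing; at the other extreme one recovers $\co$ directly.

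Standard codimension-one boundary analysis of $\mathcal{M}_\alpha$ then yields the desired chain homotopy: the parametrized count gives a Hochschild cochain whose differential equals the difference $\phi \circ \oc_1^\vee(\alpha) - \co(\alpha)$, modulo contributions from strip-breaking on the boundary inputs and $\mu^1$-exact pieces (which together reconstitute the Hochschild coboundary). The main obstacle is the careful coherent choice of perturbation data across the three distinct moduli problems involved ($\oc$, $\co$, and the wpCY structure), since each comes with its own perturbation scheme; this bookkeeping, including the verification that the parametrized Gromov compactification has the correct codimension-one strata and no hidden disc bubbles (guaranteed by compactness of the Lagrangians together with monotonicity or the divisor axiom), is precisely what the cited references \cite{afooo, Sheridan:2016, PerutzSheridan:2015} carry out in detail.
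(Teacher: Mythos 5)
The paper does not itself prove Proposition~\ref{occodual}: it is imported from \cite{afooo}, \cite{Sheridan:2016}, and \cite{PerutzSheridan:2015}, with the only in-text indication of the argument being the remark immediately preceding the statement that it ``roughly comes from a `deformation of perturbation data argument'.'' So there is no in-paper proof to compare against step by step, and your task here is really to reconstruct what that one-line gloss is pointing at.

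Your sketch captures the essential shape of that argument. After pairing $\co(\alpha)(x_k,\dots,x_1)$ against $x_0$ with the wpCY pairing, and $\oc(x_k,\dots,x_0)$ against $\alpha$ with the quantum intersection pairing, one is comparing two scalar-valued counts of $J$-holomorphic discs with identical domain and constraint data ($k+1$ boundary marked points, one interior marked point incident to a cycle Poincar\'{e} dual to $\alpha$), so the only possible discrepancy sits in the choice of coherent perturbation/Floer data. A one-parameter family interpolating those choices, whose codimension-one boundary (strip breaking and boundary bubbling) assembles the Hochschild coboundary, yields the chain homotopy. You state exactly this, and you also correctly flag the real technical load---coherence of perturbation data across the three moduli problems ($\oc$, $\co$, wpCY)---as the content deferred to the references.

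One caveat on the framing: the paper explicitly stresses that the chain-level structure coefficients of $\oc$ and $\co$ could a priori be \emph{literally the same} (same moduli, same incidence constraints), with the only difference being the perturbation schemes; the ``deformation'' is of that auxiliary data over a fixed domain moduli problem. Your account instead centers on varying a geometric modulus (a cross-ratio) and describes the extremes as degenerations ``through a sphere... then reattached via a thin strip'' carrying the quantum and wpCY pairings. That is a more elaborate degeneration picture than the paper's remarks suggest, and it is not clear that those specific broken configurations are the actual codimension-one strata of the interpolation (for instance, the quantum pairing is realized here simply by constraining the interior marked point to a dual cycle, not by sphere bubbling). Since the paper defers the actual construction to the cited works, I would not call your variant wrong, but the more economical route---and the one the paper's own gloss indicates---is a direct homotopy of perturbation data with no need to manufacture intermediate sphere- or strip-breaking degenerations beyond the standard ones that produce the Hochschild differential.
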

Namely, modulo identifications of codomains and domains, $\oc$ and $\co$ are linear dual maps.
\begin{rem} 
    In the strictly cyclic framework of \cite{afooo}, $\oc$ and $\co$ can be
    set up to be linear dual on the chain level.  
\end{rem}
\begin{cor}\label{occorelated}
    $\oc$ is an isomorphism if and only if $\co$ is.
\end{cor}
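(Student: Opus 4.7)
The plan is to chase the factorization $\co \cong \phi \circ \oc^{\vee}_1$ supplied by Proposition \ref{occodual} and observe that everything between $\co$ and $\oc$ is already an isomorphism. First I would unpack $\oc^{\vee}_1$ as the composition of the Poincar\'e duality map $PD_X: \r{QH}^*(X) \to \r{QH}^*(X)^{\vee}$, $\alpha \mapsto \langle \alpha, - \rangle_X$, with the linear dual $\oc^{\vee}: \r{QH}^*(X)^{\vee} \to \r{HH}_{*-n}(\mc{F})^{\vee}$. Since $X$ is compact, $\r{QH}^*(X)$ is finite-dimensional and the integration pairing \eqref{eq:qhpairing} is non-degenerate, so $PD_X$ is an isomorphism. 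The map $\phi$ is an isomorphism by Lemma \ref{hhweakcy} applied to the weak proper Calabi-Yau structure on $\mc{F}$ recalled in \S\ref{sec:fukayawpCY}. Therefore $\co$ is an isomorphism if and only if $\oc^{\vee}$ is.

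Next I would reduce the isomorphism-of-a-dual-map statement to an isomorphism statement about $\oc$ itself. This is automatic when both sides are finite-dimensional, so the only issue to address is finiteness. In the forward direction, if $\oc$ is an isomorphism then $\r{HH}_{*-n}(\mc{F}) \cong \r{QH}^*(X)$ is finite-dimensional, and dualizing a linear isomorphism between finite-dimensional spaces gives an isomorphism, so $\oc^{\vee}$, and hence $\co$, is an isomorphism. In the backward direction, suppose $\co$ is an isomorphism; then $\r{HH}^*(\mc{F}) \cong \r{QH}^*(X)$ is finite-dimensional, and via $\phi$ so is $\r{HH}_{n-*}(\mc{F})^{\vee}$. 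Since a vector space is finite-dimensional iff its dual is (the canonical evaluation map $V \hookrightarrow V^{\vee\vee}$ is always injective), $\r{HH}_{*-n}(\mc{F})$ is finite-dimensional. Now $\oc^{\vee}$ is an isomorphism between finite-dimensional spaces, and double-dualizing together with the canonical identifications $V \cong V^{\vee\vee}$ shows $\oc$ itself is an isomorphism.

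The argument is essentially bookkeeping; the only mild subtlety is the finiteness issue in the backward direction, which I would dispatch as above. No step is a serious obstacle since both the non-degeneracy of the quantum pairing and the isomorphism $\phi$ from the wpCY structure are in hand.
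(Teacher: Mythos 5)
Your proposal is correct and is essentially the argument the paper has in mind (the paper presents the corollary as immediate from Proposition \ref{occodual} without writing out the chase). One small simplification: the finiteness detour in the backward direction is unnecessary, since over a field a linear map $f: V \to W$ is an isomorphism if and only if its dual $f^{\vee}: W^{\vee} \to V^{\vee}$ is, with no dimension hypothesis (injectivity of $f^{\vee}$ forces surjectivity of $f$ by considering functionals vanishing on $\mathrm{im}\,f$, and surjectivity of $f^{\vee}$ forces injectivity of $f$ by considering functionals nonvanishing on a kernel element). Your route through double-duals and the injectivity of $V \hookrightarrow V^{\vee\vee}$ is valid, just not needed; the rest of the factorization through $\phi$ and $PD_X$ is exactly right.
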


Finally, a well known aspect of compatibility of $\oc$ and $\co$ with algebraic structures is 
\begin{prop}[\cites{Seidel:2002ys, ganatra1_arxiv, Ritter:2017aa}]
    \label{ocmodule}
    $\co$ is an algebra homomorphism, and with respect to the $\co$-induced
    module structure of $\r{HH}_{*-n}(\mc{F})$ over $QH^*(X)$, $\oc$ is
    a $QH^*(X)$ module homomorphism.
\end{prop}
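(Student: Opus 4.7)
The plan is to establish both assertions by a standard TQFT-style analysis: each claimed identity arises as the boundary of a one-parameter family of pseudo-holomorphic discs with two interior marked points carrying closed-string insertions, where the parameter is an internal modulus (for instance a cross-ratio) whose two limits yield the two sides of the desired equation.

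For the first statement, that $\co$ is an algebra homomorphism, I would consider, for each sequence of Lagrangian branes $L_0,\ldots,L_k$, moduli of discs with $k+1$ cyclically ordered boundary marked points (one designated as output) together with two interior marked points constrained by quantum classes $\alpha,\beta\in \r{QH}^*(X)$, and parametrize the relative position of the two interior points. At one end the two interior points collide, producing (after the usual neck-stretching or gluing analysis) a sphere bubble carrying $\alpha\star\beta$ attached at a single interior marked point of a disc; summed over inputs this computes $\co(\alpha\star\beta)$. At the other end the two interior points separate onto distinct disc components in a nodal configuration, producing the Yoneda (cup) product of $\co(\alpha)$ with $\co(\beta)$ in $\r{HH}^*(\mc{F})$. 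The parametrized moduli space thus delivers a chain homotopy between the two sides, giving the algebra homomorphism on cohomology.

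For the module statement, the analogous moduli consist of discs with cyclically arranged boundary marked points corresponding to a Hochschild cycle $c$, one interior input constrained by $\alpha\in \r{QH}^*(X)$, and one interior output marked point. Interpolating the relative position of the two interior points, one degeneration collides them into a sphere bubble, producing $\alpha\star\oc(c)$; the opposite degeneration moves the interior $\alpha$-insertion onto the boundary of the disc, where it is absorbed into the Hochschild cycle via precisely the cap action defining $\co(\alpha)\cdot c$. Applying $\oc$ to the result then yields $\oc(\co(\alpha)\cdot c)$, and equating the two boundary contributions gives the module identity.

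The main technical obstacle, as is typical for such TQFT arguments, is the consistent choice of coherent Floer perturbation data on all the relevant moduli and their parametrized versions, so that the codimension-one boundary strata genuinely assemble into chain-level homotopies realizing the asserted identities. One must simultaneously reconcile the perturbation schemes used to define $\oc$, $\co$, the Yoneda product on $\r{HH}^*(\mc{F})$, the cap-product module structure of $\r{HH}_{*-n}(\mc{F})$ over $\r{HH}^*(\mc{F})$, and the quantum product on $\r{QH}^*(X)$. This is standard but technically substantial, and is carried out in the cited references \cites{Seidel:2002ys,ganatra1_arxiv,Ritter:2012aa}.
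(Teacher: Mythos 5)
The paper states this proposition without proof, citing \cites{Seidel:2002ys, ganatra1_arxiv, Ritter:2012aa}, so there is no in-paper argument to compare against; your degeneration-of-moduli sketch is essentially the approach those references take. Two small imprecisions are worth flagging. For the algebra homomorphism, the ``separation'' stratum is not merely the two $\alpha$- and $\beta$-constrained interior points lying on two separate disc components: the codimension-one boundary is a nodal configuration with a central disc carrying the $\ainf$ composition $\mu^k$ into which the two $\co$-type bubbles are inserted at boundary nodes, and it is the sum over all such insertion positions that reproduces the Hochschild cup product $\co(\alpha)\cup\co(\beta)$. For the module identity, the interior $\alpha$-marked point does not ``move onto the boundary''; rather, as it approaches the boundary a disc bubble forms carrying the $\alpha$-constrained interior point together with a contiguous arc of the Hochschild boundary inputs, attached to the main disc at a boundary node, and the resulting sum over positions realizes the cap action $\co(\alpha)\cap c$ before $\oc$ is applied. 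These are the standard refinements, and your closing paragraph correctly identifies the genuine technical content---coherent perturbation data reconciling $\oc$, $\co$, the $\ainf$ structure, and the quantum product---that the cited references carry out.
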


\subsection{Abouzaid's split-generation criterion}
In \cite{Abouzaid:2010kx}, Abouzaid introduced a criterion, in terms of the
open-closed map, for when a collection of Lagrangian branes split-generates the
Fukaya category. Though the original criterion was written for the wrapped
Fukaya category of a Liouville manifold, the statement and its proof are
essentially the same in other contexts (with some modifications to account for
eigenvalue decompositions in the monotone case). This criterion has been
implemented in the monotone setting in work of Ritter-Smith
\cite{Ritter:2017aa} and Sheridan \cite{Sheridan:2016}, and will be
implemented for relative Fukaya categories of Calabi-Yau or monotone manifolds
by Perutz-Sheridan \cite{PerutzSheridan:2015}.

\begin{thm}[Generation criterion, \cite{Abouzaid:2010kx}] \label{thm:gencriterion}
    Let $\bb \subset \mc{F}(X)$ be a full subcategory. If the map
    $\oc|_{\bb}: \r{HH}_{-n}(\bb) \ra QH^0(X)$ contains the unit $e$ in its
    image, then $\bb$ split-generates $\mc{F}(X)$.
\end{thm}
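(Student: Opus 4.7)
The plan is to fix an arbitrary brane $L \in \mc{F}(X)$ and exhibit $\id_L \in HF^0(L,L)$ as a composition $L \to T \to L$ for some twisted complex $T$ on objects of $\bb$; by idempotent splitting in the split-closed envelope $perf(\mc{F}(X))$, this realizes $L$ as a summand of an object of $perf(\bb)$, and since $L$ was arbitrary, $\bb$ will split-generate $\mc{F}(X)$.

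The central geometric ingredient I would construct is a ``two-pointed'' open-closed operation, defined by moduli spaces of holomorphic discs with two distinguished boundary marked points on $L$, boundary arcs in between labeled cyclically by sequences $K_0, \dots, K_m \in \bb$ with intersection-point inputs, and one unconstrained interior marked point. The usual Novikov-weighted count yields a chain map
\[
    \mu_L : CC_{*-n}(\bb) \otimes CF^*(L,L) \lra CF^*(L,L),
\]
which, by construction and summand by summand of a Hochschild chain, factors through tensor products of the form $CF^*(L,K_0) \otimes \hom_{\bb}(K_0,K_1) \otimes \cdots \otimes \hom_{\bb}(K_{m-1},K_m) \otimes CF^*(K_m,L)$; i.e., through $\hom(T,L) \otimes \hom(L,T)$ for a twisted complex $T \in perf(\bb)$ assembled from the $K_i$.

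The crucial and most delicate step is to identify $\mu_L(\alpha,-)$ cohomologically, for a Hochschild cycle $\alpha$, with the quantum cap-product action of $\oc(\alpha) \in \r{QH}^*(X)$ on $HF^*(L,L)$, equivalently with $\co(\oc(\alpha))|_L$ acting via the module structure of Proposition \ref{ocmodule}. I would establish this by a cobordism of moduli spaces obtained by degenerating the interior marked point toward the segment between the two boundary markings: one end of the family recovers $\mu_L(\alpha,-)$, while the other breaks into an open-closed disc computing $\oc(\alpha)$ glued at a nodal interior marked point to a cap-product disc on $L$ computing the quantum module action. The codimension-one degenerations in this interpolating family then furnish the chain homotopy. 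This gluing-and-degeneration analysis, together with the careful coherence of perturbation data required to define both $\oc$ and $\mu_L$ compatibly in a single family, is the technical heart of the argument.

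Granted this compatibility, the hypothesis $\oc(\alpha) = e$ forces $\mu_L(\alpha,-)$ to act as the identity on $HF^*(L,L)$, since $\co$ sends $e$ to the unit of $\r{HH}^*(\mc{F})$ (Proposition \ref{ocmodule} says $\co$ is a unital algebra homomorphism) and the unit acts trivially on any module. Unwinding the factorization above then exhibits $\id_L$ as a composable pair of morphisms factoring through $T \in perf(\bb)$, and in $H^0(perf(\mc{F}(X)))$ this means $L$ is a summand of $T$, as desired.
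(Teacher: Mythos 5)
The paper does not prove this theorem; it is stated with a citation to Abouzaid \cite{Abouzaid:2010kx} and a remark that the argument carries over to the settings used here, with implementations in \cite{Ritter:2012aa}, \cite{Sheridan:2016}, \cite{PerutzSheridan:2015}. So your sketch must be measured against the known argument rather than against anything in-paper.

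Your high-level plan --- show $\id_L$ factors through an object of $perf(\bb)$, and extract this from a Cardy-type compatibility between a Yoneda-style multiplication and $\oc$, $\co$, using the module structure of Proposition \ref{ocmodule} to close the loop --- is indeed the shape of Abouzaid's argument. The gap is in the geometry of the moduli you propose. An operation with domain $CC_{*-n}(\bb) \otimes CF^*(L,L)$ cannot be defined on discs as you describe: with $L$- and $K_i$-labelled arcs on a single boundary circle, the transitions $L \leftrightarrow K_0$ and $K_m \leftrightarrow L$ demand inputs from $CF^*(L,K_0)$ and $CF^*(K_m,L)$, which are present in neither tensor factor of your stated domain (that disc operation instead naturally has domain $CC_{*-n}(\bb, Y^r_L \otimes Y^l_L)$). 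What one actually uses are moduli of \emph{annuli}, with $L$ on one boundary circle (carrying the $CF^*(L,L)$ marked points) and the $K_i$'s on the other, and the annular \emph{modulus} --- not the position of an interior marked point --- is the cobordism parameter. Sending the modulus to zero pinches the inner circle and produces the $\oc$-disc glued at an interior node to the $\co_L$/cap-product disc, which is your ``other end''; sending it to infinity produces a thin annulus degenerating into a chain of discs joined at boundary nodes, which is precisely the factorization of the operation through module structure maps of $Y^l_L$ and $Y^r_L$, i.e.\ through $Y^l_L \otimes_\bb Y^r_L$. Finally, ``unwinding the factorization'' is not automatic: one needs the homological-algebra statement that a cohomology class of $Y^l_L \otimes_\bb Y^r_L$ mapping to $\id_L$ under the Yoneda evaluation exhibits $L$ as a homotopy retract of an object of $perf(\bb)$; that is where twisted complexes really get built, and it deserves an explicit lemma rather than a gesture. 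With the annulus picture substituted and this lemma supplied, your plan reproduces the standard proof.
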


In the monotone case, while the above Theorem suitably interpreted is valid
without passing to summands, it is more useful to have a version for a single
summand of the Fukaya category at a time.  Fix some field $\K$ and grading
group $G$ and consider quantum cohomology and the Fukaya category with $G$
grading over $\K$.
\begin{thm}[Generation criterion, monotone variant, \cites{Ritter:2017aa, Sheridan:2016}]\label{thm:gencriterionmonotone}
    Let $w$ be a generalized eigenvalue of $c_1(X) \star -$, and denote by
    $e_w$ the projection of the unit $e$ to the generalized $w$ eigenspace
    $QH^0(X)_w$. Let $\bb \subset \mc{F}_w(X)$ be a full subcategory of the
    monotone Fukaya category summand corresponding to $w$. If the map
    $\oc|_{\bb}: \r{HH}_{-n}(\bb) \ra QH^0_w(X)$ hits $e_w$, then $\bb$
    split-generates $\mc{F}_w(X)$.
\end{thm}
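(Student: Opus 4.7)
The plan is to adapt Abouzaid's proof of Theorem \ref{thm:gencriterion} to the single eigensummand $\mc{F}_w(X)$, relying on the fact (recorded in the remark after \eqref{eq:co}) that in the monotone setting the open-closed and closed-open maps respect the eigenvalue decompositions of both $\r{QH}^*(X)$ and $\mc{F}(X)$.

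First I would invoke Proposition \ref{ocmodule}: $\co$ is a unital algebra homomorphism $\r{QH}^*(X) \to \r{HH}^*(\mc{F}(X))$, and $\oc$ is $\r{QH}^*(X)$-linear with respect to the resulting module structure on $\r{HH}_{*-n}(\mc{F}(X))$. Restricting to the $w$-summand, $\co|_{\r{QH}^*(X)_w}$ remains a unital algebra homomorphism into $\r{HH}^*(\mc{F}_w(X))$, so in particular $\co(e_w) = 1 \in \r{HH}^0(\mc{F}_w(X))$, since $e_w$ is the multiplicative identity of the $w$-summand of $\r{QH}^*(X)$.

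Now fix an arbitrary object $K \in \mc{F}_w(X)$; it suffices to show $K$ is split-generated by $\bb$. By the standard characterization of split-generation in triangulated envelopes (\cite{Abouzaid:2010kx}*{\S 1.2}, \cite{Seidel:2008zr}*{\S 5}), this is equivalent to showing that $\mathrm{id}_K \in HF^0(K,K)$ lies in the image of the sum of the composition maps
\[
\bigoplus_{L \in \bb} HF^*(L,K) \otimes HF^*(K,L) \xrightarrow{\mu^2} HF^*(K,K).
\]
Given a Hochschild cycle $\sigma \in \r{HH}_{-n}(\bb)$ with $\oc(\sigma) = e_w$, the key input is the \emph{Cardy relation} (\cite{Abouzaid:2010kx}, established in the monotone summand context by \cite{Ritter:2012aa, Sheridan:2016}): there is a geometric ``Cardy map'' $CA_K: \r{HH}_{-n}(\bb) \to HF^0(K,K)$, defined by counting discs with one interior marked point constrained to $K$ and two boundary marked points on objects of $\bb$ (equivalently a suitable annular degeneration), which factors through the displayed composition and satisfies
\[
    CA_K(\tau) \;=\; \co(\oc(\tau)) \cdot \mathrm{id}_K
\]
for every $\tau \in \r{HH}_{-n}(\bb)$. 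Substituting $\tau = \sigma$ gives $\mathrm{id}_K = \co(e_w) \cdot \mathrm{id}_K = CA_K(\sigma)$, so $\mathrm{id}_K$ lies in the image of $\mu^2$ as $L$ ranges over $\bb$, completing the argument.

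\textbf{Main obstacle.} The only nontrivial geometric input is the Cardy relation and the verification that it respects the $w$-eigensplitting; both are supplied by the cited monotone references, where they are derived from a codimension-one boundary analysis of a one-parameter family of annuli with boundary conditions on $K$ and on objects of $\bb$. Given these, the remainder of the proof is a purely formal consequence of the module structure of Proposition \ref{ocmodule} together with the unitality of $\co$.
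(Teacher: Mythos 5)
The paper does not prove Theorem~\ref{thm:gencriterionmonotone}; it simply records it as a result of the cited references \cite{Ritter:2012aa, Sheridan:2016}, where Abouzaid's annulus argument is adapted to the monotone eigensummand setting. Your overall strategy — invoke the Cardy relation to identify $\co_K\circ\oc$ with a map that factors through objects of $\bb$, then note that the unit is hit — is indeed the shape of the argument in those sources, and your observation that $\co(e_w)$ projects to $\mathrm{id}_K$ for $K\in\mc{F}_w(X)$ (unitality plus respect for the eigendecomposition) is the right monotone-specific ingredient.

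However, there is a genuine gap in your middle step. You claim that split-generation of $K$ is equivalent to, and that the Cardy map factors through, the image of
\[
\bigoplus_{L\in\bb} HF^*(L,K)\otimes HF^*(K,L)\xrightarrow{\ \mu^2\ } HF^*(K,K).
\]
This is not what Abouzaid's criterion says, and it is not where the Cardy map lands. If $\mathrm{id}_K$ were in the image of this map, then $K$ would be a direct summand of a \emph{direct sum} of shifted objects of $\bb$, which is far stronger than split-generation and false in almost all examples of interest. The Cardy factorization actually goes through $H^*\bigl(\mathcal{Y}_K^l\otimes_{\bb}\mathcal{Y}_K^r\bigr)$, the bar-type (derived) tensor product of the left and right Yoneda modules of $K$ over $\bb$ — geometrically, the relevant discs carry arbitrarily many boundary insertions on objects of $\bb$, not just one on each side. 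The correct form of Abouzaid's criterion is: if $\mathrm{id}_K$ lies in the image of the evaluation map $H^*(\mathcal{Y}_K^l\otimes_{\bb}\mathcal{Y}_K^r)\to HF^*(K,K)$, then $\mathcal{Y}_K$ is a perfect $\bb$-module, which one then converts to a statement about $K$ being a summand of a \emph{twisted complex} over $\bb$. Without replacing your direct sum with the bar complex, the chain "Cardy map factors through the composition, hence $K$ is split-generated" does not go through.
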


\subsection{Compatibility with pairings}
The newest feature of the open-closed map $\oc$ which is crucial to our argument
is the compatibility of $\oc$ with pairings:
\begin{thm}[\cite{GPS2:2015}, see also \cite{afooo}]\label{thm:ocpairing}
    Up to a sign of $(-1)^{n(n+1)/2}$, $\oc$ is an {\em isometry}, meaning that
    it intertwines pairings: 
    \[
        \langle \oc(\alpha), \oc(\beta) \rangle_{X} = (-1)^{n(n+1)/2} \langle \alpha, \beta \rangle_{Shk}.
    \]
\end{thm}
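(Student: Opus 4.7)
The plan is to construct a chain-level homotopy between the two bilinear operations $\langle \oc(-), \oc(-) \rangle_X$ and $(-1)^{n(n+1)/2} \langle -, - \rangle_{Muk}$ on $\r{HH}_{*-n}(\mc{F}) \otimes \r{HH}_{*-n}(\mc{F})$ via a single parametrized moduli space of holomorphic curves. Concretely, I would introduce a one-parameter family $\Sigma_r$ (for $r \in [0, \infty]$) of annular Riemann surfaces, each equipped with two cyclic strings of boundary marked points --- one string on each boundary circle, carrying the Lagrangian labels and inputs coming from the two Hochschild chains --- and count pseudo-holomorphic maps $u: \Sigma_r \to X$ sending the two boundary components to the appropriate Lagrangian cycles. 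The crucial point is that the two ends of the parameter interval are designed to produce precisely the two pairings being compared.

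At the \emph{long/thin-neck end} $r = \infty$, the annulus degenerates into two discs joined at a single interior nodal point; the holomorphic curves there factor as two disc configurations each with one free interior marked point, together with the requirement that the two marked points map to the same point of $X$. Summing over chain-level representatives, this limit produces exactly $\int_X \oc(\alpha) \cup \oc(\beta) = \langle \oc(\alpha), \oc(\beta) \rangle_X$. At the \emph{short end} $r = 0$, the annulus collapses so that the two boundary marked-point strings are concatenated into a single cyclic word; the resulting count is the geometric incarnation of the Mukai pairing (matching Shklyarov's trace description, which in the strictly cyclic framework of \cite{afooo} coincides with the cyclic correlator on a cyclic $\ainf$ algebra). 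In this way the moduli space $\bigcup_r \M(\Sigma_r)$ becomes a chain-level cobordism whose boundary exhibits the identity of the two operations.

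The technical content, and the main obstacle, is twofold. First, one must verify that all \emph{internal} boundary strata of the parametrized moduli space --- disc bubbles, strip breaking along either boundary component, sphere bubbles --- contribute only terms that vanish modulo the Hochschild differentials on the two inputs, so that the identity descends to cohomology. This requires a careful combinatorial enumeration of boundary strata of the annular moduli, together with a matching of the resulting operations against the operadic structure of $\oc$ and of Hochschild chains. Second, one must pin down the prefactor $(-1)^{n(n+1)/2}$ by an orientation computation, comparing the canonical orientation of the annular moduli space near its two ends with the product orientations used to define each pairing; the sign $n(n+1)/2$ is characteristic of a full Poincar\'e-duality twist and arises from the shift $[-n]$ in the weak proper Calabi-Yau structure \eqref{wpcy}.

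Finally, to lift this chain-level identity to the level of Hochschild homology one invokes the usual transversality/virtual perturbation package that underwrites the rest of the open-closed formalism: perturbation data on $\Sigma_r$ must be chosen consistently with the choices used in defining $\oc$ at the $r = \infty$ end and with those used to define $\langle -, - \rangle_{Muk}$ at the $r = 0$ end, which is precisely the compatibility package developed in \cite{GPS2:2015} (and in the strictly cyclic framework of \cite{afooo}).
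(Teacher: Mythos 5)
The paper does not actually prove Theorem~\ref{thm:ocpairing}; it cites the result to \cite{GPS2:2015} (and \cite{afooo}) and offers only a one-sentence description of the mechanism: ``a version of the Cardy condition arising from a certain comparison of (operations associated to) different degenerations of $J$-holomorphic maps from an annulus.'' Your sketch is precisely an expansion of that sentence, so at the level of approach you and the paper agree. Two points are worth tightening, however. First, your description of the $r\to 0$ end as the two boundary words ``concatenated into a single cyclic word'' is not quite the right picture: the stable limit of a thin annulus is a \emph{nodal} disc (a disc with a boundary self-node, Euler characteristic $0$), and what one sees operationally is a broken-strip insertion at the node separating the two Hochschild words. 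It is this bimodule-type composition, not a bare concatenation, that one must match against Shklyarov's categorical trace; even in the strictly cyclic framework the identification with a cyclic correlator requires some care, and in the non-cyclic (wpCY) framework used here it is genuinely a trace formula, not a correlator. Second, the bulk of the actual content --- the enumeration of intermediate boundary strata of the annulus moduli (disc bubbling, strip breaking on each boundary, sphere bubbling in the neck), their matching against the Hochschild differentials of the two inputs, and the orientation bookkeeping that produces $(-1)^{n(n+1)/2}$ --- is only flagged in your proposal, which is appropriate for a sketch but is exactly the part outsourced to \cite{GPS2:2015}, so there is nothing in this paper against which to check those details.
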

This Theorem, which is a version of the Cardy condition arising from a certain
comparison of (operations associated to) different degenerations of
$J$-holomorphic maps from an annulus, will be implemented in
\cite{GPS2:2015} in the setting of the relative Fukaya category. 
See \S
\ref{sec:pairing} for a proof in the monotone and tautologically unobstructed
cases (included here for completeness). In the abstract setting of open-closed
topological field theories (which do not directly apply here), the
compatibility of a formal analogue of the open-closed map with certain pairings
is special case of a result of Costello (see \cite{Costello:2006vn}, Theorem A
and discussion below it).

\section{Proof of Main Results}\label{sec:proof}
\begin{lem}\label{lem:HHbigenough}
    If $\mc{A}$ is any cohomologically unital non-empty (dg or $\ainf$) category, then
    then $\r{rk}\ \r{HH}^0(\mc{A}) \geq 1$. 
\end{lem}
\begin{proof}
    Since $\mc{A}$ is cohomologically unital, $\r{HH}^*(\mc{A})$ is a
    unital graded algebra, and there is a unital map $\r{HH}^*(\mc{A}) \ra
    H^*(\hom_{\mc{A}}(L,L))$ for any object $L$ of $\mc{A}$. When $\mc{A}$ is
    non-empty, by definition there is an object $L \in \ob \mc{A}$ with
    $H^*(\hom_{\mc{A}}(L,L)) \neq 0$, hence $[id_L] \in
    H^0(\hom_{\mc{A}}(L,L))$ is not zero, thus the unit in $\r{HH}^0(\mc{A})$
    cannot vanish.  
\end{proof}

\begin{cor}\label{cor:HHbigenough}
    If $X$ is connected Calabi-Yau, and $\mc{A} \subset \mc{F}(X)$ is a
    non-empty full subcategory of its $\Z$-graded Fukaya category, then 
    $\r{rk}\ \r{HH}^0(\mc{A}) \geq \r{rk}\ \r{QH}^0(X)$.  
\end{cor}
\begin{proof}
    Note that since we are in a $\Z$-graded setting, $\r{rk}\ \r{QH}^0(X) = \r{rk}\
    \r{H}^0(X) = 1$.  Now (since $\mc{F}(X)$ and hence $\mc{A}$ are
    cohomologically unital) apply Lemma \ref{lem:HHbigenough}.
\end{proof}

Below we prove Theorems \ref{thm:mainCY} and \ref{thm:mainFano} simultaneously,
suppressing the summand decompositions that occur in the latter case. So when
$X$ is monotone, all instances of $\r{QH}^*(X)$ should be replaced with
$\r{QH}^*(X)_w$.
\begin{proof}[Proof of Theorems \ref{thm:mainCY} and
    \ref{thm:mainFano}]
    Suppose $\mc{A} \subset \mc{F}$ satisfies the hypotheses of Theorem
    \ref{thm:mainCY} or \ref{thm:mainFano}. It follows that $\mc{A}$ is a
    smooth and proper category, and in particular, $\r{HH}_*(\mc{A})$ is finite
    dimensional and the Shklyarov pairing
    \[
        \r{HH}_*(\mc{A}) \otimes \r{HH}_*(\mc{A}) \ra \K
    \]
    is {\em non-degenerate} by Proposition \ref{shklnondeg}, meaning the
    induced adjoint map 
    \begin{equation}\label{shkladjoint}
        \begin{split}
            \phi_{Shk}^{L}: \r{HH}_*(\mc{A}) &\ra \r{HH}_{-*}(\mc{A})^{\vee}\\
            \phi_{Shk}^L(\alpha) &:= \langle \alpha, - \rangle_{Shk}
        \end{split}
    \end{equation}
    is an isomorphism.  But, by Theorem \ref{thm:ocpairing},  
    the isomorphism $\phi_{Shk}^L$ fits into the following commutative (up to
    an overall sign of $(-1)^{n(n+1)}$) diagram 
    \begin{equation}\label{maindiagram}
        \xymatrix{ \r{HH}_{*-n}(\mc{A}) \ar[dr]^{\oc} \ar[rr]^{\cong}_{\phi_{Shk}^L} & &  \r{HH}_{n-*}(\mc{A})^{\vee}\\
        &\mathrm{QH}^*(X) \ar[ur]^{\oc^{\vee}_1} &}
    \end{equation}
    where $\oc^{\vee}_1$ is as in \eqref{eq:ocvee1}. Note that we are using the
    shorthand $\oc$ above for $\oc|_{\mc{A}}$, and similarly for $\oc^{\vee}_1$.

    It follows immediately from the isomorphism \eqref{shkladjoint} and the
    diagram \eqref{maindiagram} that the map $\oc: \r{HH}_{-n}(\mc{A}) \ra
    \mathrm{QH}^0(X)$ is injective and $\oc^{\vee}_1: \mathrm{QH}^0(X) \ra
    \r{HH}_n(\mc{A})^{\vee}$ is surjective.  Moreover, as all of the vector
    spaces in \eqref{maindiagram} are finite dimensional, $\oc^{\vee}_1$ and
    $\oc$ will be isomorphisms if and only if $\r{rk}\ \r{QH}^0(X) \leq \r{rk}\
    \r{HH}_n(\mc{A})^{\vee}$. Since $\mc{A}$, like any full subcategory of the
    Fukaya category, is {\em weak proper Calabi-Yau} (see \S
    \ref{sec:wpCY} and \S \ref{sec:fukayawpCY}), Lemma \ref{hhweakcy} implies that
    $\r{HH}_n(\mc{A})^{\vee} \cong \r{HH}^0(\mc{A})$, so $\oc$ is an
    isomorphism onto $QH^0(X)$ if and only if $\r{rk}\ \r{QH}^0(X)\leq \r{rk}\
    \r{HH}^0(\mc{A})$. This last condition is ensured by hypothesis in the
    monotone case or automatically in the Calabi-Yau case by Lemma
    \ref{cor:HHbigenough}.  So $\oc$ is an isomorphism from
    $\r{HH}_{-n}(\mc{A})$ onto $\r{QH}^0(X)$ and in particular hits the unit.
    By definition, we have shown that $\mc{A}$ satisfies Abouzaid's
    split-generation criterion (Theorem \ref{thm:gencriterion} or
    \ref{thm:gencriterionmonotone}), so it split-generates $\mc{F}$.
\end{proof}

\begin{rem}
    The fact that smoothness of $\mc{A}$ implies that $\oc|_{\mc{A}}$ is
    injective was already observed in \cite{GPS1:2015}.  
\end{rem}

For completeness, we also recall the version of the proof of Corollary
\ref{thm:ociso} appearing in \cite{GPS1:2015} (which requires just the
structures we have already developed): 
\begin{proof}[Proof of Corollary \ref{thm:ociso}]
    Under the hypotheses given, $\mc{A}$ split-generates $\mc{F}$ so
    $\r{HH}_*(\mc{A}) \cong \r{HH}_*(\mc{F})$ by Morita
    invariance of Hochschild homology. The proof of Theorems 1 and 2, which
    invoke the compatibility of $\oc$ with the Shklyarov pairing (Theorem
    \ref{thm:ocpairing}), imply that the map $\oc: \r{HH}_{*-n}(\mc{F}) \ra
    \r{QH}^*(X)$ is injective and hits the unit.
    By the compatibility of $\oc$
    with module structures (see Proposition \ref{ocmodule}), it follows
    that $\oc$ is surjective too.  
    Hence, $\oc$ is an isomorphism.
    Since $\co$ is linear dual to $\oc$ by Corollary \ref{occorelated}, $\co$
    is an isomorphism too.
\end{proof}

\begin{rem}
    \label{rem:generalFukayaresult}
    Let $X$ be a general symplectic manifold. The methods of
    \cites{Fukaya:2009ve, Fukaya:2009qf, afooo} determine from $X$ a {\em curved
    $\ainf$ category} over $\Lambda$, which contains genuine $\ainf$
    subcategories of weakly unobstructed objects (or
    objects with weak bounding co-chains) $\mc{F}_{\lambda}$ for every $\lambda
    \in \Lambda^{> 0}$.
    Unlike the monotone case, the values of $\lambda$ for which
    $\mc{F}_{\lambda}$ is non-trivial are not necessarily eigenvalues of
    $c_1(X) \star -$ (rather, the precise relationship is unknown, c.f.,
    \cite{Seidel:2014ac}*{Remark 5.4}).  The analogous results to Theorems
    \ref{thm:mainCY} and \ref{thm:mainFano} is: if $\mc{A} \subset
    \mc{F}_{\lambda}$ is homologically smooth, then $\r{HH}_{*-n}(\mc{A})$
    injects onto an idempotent summand $q(\r{QH}^*(X))$, $q \in \r{QH}^*(X)$,
    and moreover generates $\mc{F}_{\lambda, q}$, the projection of the Fukaya
    category onto this summand. 
    
    In particular, if one has found a collection of smooth full subcategories
    $\mc{A}_i \subset \mc{F}_{\lambda_i}$ with $\lambda_i$ distinct, and
    $\sum_i \mathrm{rk}\ \r{HH}^0(\mc{A}_i) \geq \mathrm{rk}\ \r{QH}^0(X)$, then the
     $\mc{A}_i$ split-generate the Fukaya category.

    The last assertion is an immediate consequence of the usual proof given
    above along with the fact, which will appear in \cite{afooo}, explained to
    the author by K.  Fukaya, that the images of $\r{HH}_*(\mc{A}_i)$ in
    $\r{QH}^*(X)$ must be orthogonal.
\end{rem}
    
As described in the introduction, there is an algebro-geometric mirror
counterpart to Theorem \ref{thm:mainCY} stated in Theorem \ref{thm:agfolk},
which seems to be well known:
\begin{thm}[Theorem \ref{thm:agfolk}]\label{thmfolkrestate}
    Suppose $Y$ is a connected smooth and proper Calabi-Yau variety (over $\C$
    for simplicity), and let $\mc{A} \subset \r{Coh}(Y) = \r{perf}(Y)$ be a full
    subcategory. If $\mc{A}$ is homologically smooth and non-empty, then
    $\mc{A}$ split-generates $\r{Coh}(Y)$.
\end{thm}
The proof of Theorem \ref{thm:mainCY} given above can be directly translated
into a proof of Theorem \ref{thmfolkrestate}, using suitable versions of
Hochschild-Kostant-Rosenberg (HKR) maps in place of open-closed maps (these are
also known to be compatible with pairings \cite{Markarian:2009aa,
Ramadoss:2008aa}). We leave these details to the interested reader and instead
give a shortened proof (bypassing the need to invoke the compatibility of HKR maps with pairings)
that was pointed out to the author independently by P. Seidel and G. Tabuada:
\begin{proof}[Proof of Theorem \ref{thmfolkrestate}]
    Since $\mc{A}$ is smooth and proper and $\r{Coh}(Y)$ is proper, 
    \cite{Tabuada:2016aa}*{Theorem 4.4} implies that the inclusion $\mc{A}
    \hookrightarrow \r{Coh}(Y)$ is {\em admissible} i.e., induces a
    semi-orthogonal decomposition $\r{Coh}(Y) = perf(\langle \mc{A},
    \mc{A}^{\perp} \rangle)$. Now, since $Y$ is Calabi-Yau, Serre duality
    implies that the semi-orthogonal complement $\mc{A}^{\perp}$ is in fact
    completely orthogonal to $\mc{A}$. Thus $\r{Coh}(Y)$ is Morita equivalent
    to $\mc{A} \coprod \mc{A}^{\perp}$. Now, the Hochschild cohomology of an orthogonal decomposition splits, hence by Morita invariance and the HKR theorem
    \cite{Hochschild:1962aa, Gerstenhaber:1988aa, Swan:1996aa, Yekutieli:2002aa, Lowen:2005aa, Toen:2007fk} 
    we have
    \begin{equation}
         \r{HH}^0(\mc{A}) \oplus \r{HH}^0(\mc{A}^{\perp}) = \r{HH}^0(\mc{A} \coprod \mc{A}^{\perp})  \stackrel{\mathrm{Morita\ inv.}}{\cong} \r{HH}^0(\r{Coh}(Y)) \stackrel{HKR}{\cong} H^0(Y, \mathcal{O}_Y) \cong \C,
    \end{equation}
    where the last equality uses the fact $Y$ is compact connected (note that the first equality could fail if
    $\mc{A}$ and $\mc{A}^{\perp}$ were simply semi-orthogonal: Hochschild
    cohomology need not split for semi-orthogonal decompositions).  Since $\mc{A}$
    is non-empty, Lemma \ref{lem:HHbigenough} implies that
    $\r{HH}^0(\mc{A})$ has rank $\geq 1$.  Hence $\r{HH}^0(\mc{A}^{\perp}) =
    0$, which implies $\mc{A}^{\perp} = 0$. Therefore $\r{Coh}(Y) =
    perf(\mc{A})$ as desired.
\end{proof}

\section{Applications}\label{sec:applications}
We anticipate many applications of this result to computing (derived) Fukaya
categories and proving homological mirror symmetry. In the latter case, the key
point is to exploit algebro-geometric criteria for verifying homological
smoothness discussed in \S \ref{sec:smoothness}; for instance, if $\cc$ is
Morita equivalent to $Coh(Y)$ for some $Y$ then $\cc$ is smooth
\cite{Lunts:2010aa}.

\subsection{Calabi-Yau homological mirror symmetry}

As already noted in \cite{Perutz:2015aa}, Corollary \ref{coreHMSCY}
below implies simplified proofs of full Homological Mirror Symmetry (HMS) for
Calabi-Yau manifolds, for instance for Calabi-Yau hypersurfaces in
projective space \cites{Seidel:2015ab, Sheridan:2015aa}; namely one does not
need to prove by hand that the Lagrangians considered in these examples
split-generate the Fukaya category. More recently, after the initial version of
this paper appeared, Corollary \ref{coreHMSCY} was used to prove
generation  -- and complete the proof of HMS -- for Calabi-Yau hypersurfaces in
weighted projective spaces, or ``Greene-Plesser mirror pairs''
\cite{SheridanSmithGreenePlesser}.  

To formulate this simplification of HMS precisely, we recall the notion of `Core HMS' introduced by Perutz and Sheridan \cite{Perutz:2015aa}:
\begin{defn}[Compare \cite{Perutz:2015aa}]
    Let $(X,\mathbf{D})$ be a pair with $c_1(X) = 0$, $[\mathbf{D}] = \omega$
    as in \S \ref{subsec:relfuk} and $Y$ a smooth scheme over $Spec( \C((q)))$
    with $c_1(Y) = 0$. We say the pair $( (X, \mathbf{D}), Y)$ satisfy {\em Core HMS} if
    there is 
    \begin{itemize}
        \item a full sub-category $\mc{A} \subset \mc{F}(X,\mathbf{D})$; 
        \item a full split-generating sub-category $\mc{B} \subset D^b_{dg} Coh(Y)$, where $D^b_{dg} Coh(Y)$ is the (dg enhanced) derived category of coherent sheaves; and
        \item a quasi-equivalence $\mc{A} \cong \mc{B}$.
    \end{itemize}
\end{defn}
An immediate corollary of our main Theorem \ref{thm:mainCY} is:
\begin{cor}[Compare \cite{Perutz:2015aa}]\label{coreHMSCY}
    Suppose $(X, \mathbf{D})$ and $Y$ as above satisfy Core HMS. Then the subcategory
    $\mc{A}$ split-generates $\mc{F}(X, \mathbf{D})$, and hence there is an
    equivalence of split-closed derived categories \[perf( \mc{F}(X,
    \mathbf{D})) \cong D^b_{dg} Coh(Y).\]
\end{cor}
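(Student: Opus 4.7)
The plan is to deduce this corollary directly from Theorem \ref{thm:mainCY}, using the algebro-geometric criteria for homological smoothness recalled in \S\ref{sec:smoothness} to verify the smoothness hypothesis on $\mc{A}$.

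First, I would argue that $\mc{A}$ is homologically smooth. By the Core HMS hypothesis, $\mc{A}$ is quasi-equivalent to $\mc{B}$, and $\mc{B}$ split-generates $D^b_{dg} Coh(Y)$; this means $perf(\mc{B})$ is quasi-equivalent to $perf(D^b_{dg} Coh(Y))$, and since the bounded derived category of coherent sheaves is already split-closed (being an idempotent-complete triangulated category), this is in turn quasi-equivalent to $D^b_{dg} Coh(Y)$. Hence $\mc{A}$ is Morita equivalent to $Coh(Y)$. By the result of Lunts \cite{Lunts:2010aa} recalled in \S\ref{sec:smoothness}, $Coh(Y)$ is homologically smooth, so by the Morita invariance of smoothness (Proposition \ref{smoothmorita}), $\mc{A}$ is homologically smooth as well.

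Next, I would note that $\mc{A}$ is non-empty: $Y$ is a non-empty scheme so $D^b_{dg} Coh(Y)$ contains non-zero objects (e.g., structure sheaves of points), hence $\mc{B}$ is non-empty (as it split-generates), and hence $\mc{A}$ is non-empty. Since $c_1(X) = 0$ by hypothesis on the pair $(X, \mathbf{D})$, Theorem \ref{thm:mainCY} applies and yields that $\mc{A}$ split-generates $\mc{F}(X, \mathbf{D})$.

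Finally, to obtain the equivalence of split-closed derived categories, I would combine the three Morita equivalences: $perf(\mc{F}(X, \mathbf{D})) \cong perf(\mc{A})$ (from the split-generation just established), $perf(\mc{A}) \cong perf(\mc{B})$ (from the quasi-equivalence $\mc{A} \cong \mc{B}$ in Core HMS), and $perf(\mc{B}) \cong D^b_{dg} Coh(Y)$ (from the split-generation of $D^b_{dg} Coh(Y)$ by $\mc{B}$ and its idempotent-completeness). There is no real obstacle here; the only point requiring any care is the bookkeeping around idempotent completion of $D^b_{dg} Coh(Y)$, but since $Y$ is a reasonable (smooth) scheme this is standard. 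The entire content of the corollary is really in Theorem \ref{thm:mainCY} together with the algebro-geometric input that $Coh(Y)$ is smooth.
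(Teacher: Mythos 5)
Your proposal is correct and follows the paper's own argument: deduce smoothness of $\mc{A}$ via Morita invariance from Lunts' theorem that $D^b_{dg}\mathrm{Coh}(Y)$ is smooth, observe non-emptiness, and apply Theorem \ref{thm:mainCY}. The extra detail you supply on non-emptiness and on chaining the Morita equivalences is implicit but unstated in the paper's proof; the substance is the same.
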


\begin{proof}[Proof of Corollary \ref{coreHMSCY}]
    The category $D^b_{dg}(Coh(Y))$ is homologically smooth by a theorem of
    Lunts \cite{Lunts:2010aa}, and smoothness is a Morita invariant notion (see Proposition \ref{smoothmorita}).
    Since $\mathcal{A}$ is quasi-isomorphic to a category which split-generates
    $D^b_{dg}(Coh(Y))$, or in other words Morita equivalent to
    $D^b_{dg}(Coh(Y))$, it follows that $\mathcal{A}$ is non-empty and
    homologically smooth.  Hence, it satisfies the hypotheses of Theorem
    \ref{thm:mainCY}.
\end{proof}

\begin{rem} \label{restrictivehypothesis}
The main Theorem of \cite{Perutz:2015aa} further requires the mirror $Y$ to
have `maximally unipotent monodromy.' Corollary \ref{coreHMSCY}, which places no such
restrictions on $Y$, removes the need to check such a property and would 
seem to apply to a more general class of $Y$.
On the other hand, as observed
in \cite{Perutz:2015aa}, maximally unipotent monodromy of $Y$ is an a
posteriori consequence of any homological mirror equivalence in the Calabi-Yau
setting over Novikov fields: because the top quantum product
$[\omega]^{\star n} = 1 + O(q) \neq 0$, an HMS equivalence guarantees that the
mirror to $[\omega]$ in the cohomology of polyvectorfields on $Y$ must have
non-vanishing top power.  The results of \cite{Perutz:2015aa} imply that, under
homological mirror hypotheses involving the relative Fukaya category, the
symplectic form $[\omega]$ is automatically mirror to the Kodaira-Spencer class
$KS(q \partial q)$;  hence  $KS(q\partial q)^{\wedge n} \neq 0$,
which is a form of maximal unipotence of $Y$.

In general for a Calabi-Yau manifold, the Fukaya category, quantum cohomology,
and open-closed maps are expected (and in some cases known) to be {\em convergent}, in the sense
that one can set $q = \lambda \in \C$ and, for small $\lambda$, all of the
structures continue to be well-defined/involve convergent series (c.f.,
\cite{Seidel:2016ac}). In these cases, it is no longer true that the mirror
(for fixed value of $q=\lambda$) will be a family with any sort of monodromy
hypothesis.  Our automatic generation results would however continue to apply,
provided all open-closed structures discussed (and compatibilities thereof) are
convergent.  
\end{rem}

\begin{rem}
    There was no real need for us to to use the relative Fukaya category
    $\mc{F}(X,\mathbf{D})$ above aside from concretely specifying a technical
    setup; one can make the same definition for a full subcategory of the
    general Fukaya category $\mc{F}(X)$ of \cite{afooo}, in which case the
    mirror is defined over $\Lambda_\C$. Provided all of the general structures
    of Fukaya categories discussed here are verified, the same result continues
    to hold.
\end{rem}

\subsection{Fano Mirror Symmetry}\label{subsec:corehms}

Our new Corollary \ref{cor:corehmsfano} below implies a similar simplified
proof of full HMS for non-Calabi-Yau varieties; we focus here on the Fano case
and leave the general case (using the variant of our main Theorem described in
Remark \ref{rem:generalFukayaresult}) to the reader.  Previously studied
examples to which Theorem \ref{thm:mainFano} apply and simplify existing HMS
proofs (by removing the need to check generation of the Fukaya category by
hand) include:
\begin{itemize}
    \item Fano hypersurfaces in projective space, which were considered in
        \cites{Smith:2012aa, Sheridan:2016}. 

    \item Fano toric varieties, considered in \cite{afooo} (see also
        \cites{Cho:2004aa, Cho:2005aa, Cho:2006aa,  Fukaya:2010ac}) (we further
        expect the general not-necessarily-Fano variant of our Theorem as
        mentioned above to simplify the split-generation aspect of the proof of
        HMS for all toric varieties, as considered in \cite{afooo}).
\end{itemize}
\begin{rem}
When $X$ is a Fano toric manifold, the generation of its Fukaya category by the
monotone toric fiber (equipped with various local systems) is also a corollary
of Evans-Lekili's work on generation by free Lagrangian orbits of Hamiltonian
$G$-actions \cite{Evans:2019aa}. 
\end{rem}

Recall that the mirror partner to a Fano (or general non-Calabi-Yau) variety
is a {\em Landau-Ginzburg (LG) model}, which is a pair $(Y,W)$ of a non-compact
Calabi-Yau variety $Y$ over $\C$ equipped with a holomorphic function $W: Y \ra
\C$. To such an LG model one can associate the dg category of {\em matrix
factorizations}
\begin{equation}\label{eq:mf}
    \r{MF}(Y,W)
\end{equation}
as defined in \cites{Eisenbud:1980aa, Orlov:2012aa, Lin:2013aa} 
The category of matrix factorizations \eqref{eq:mf} reflects
the singularities of the fiber of $W$ over $0\in \C$ (specifically, there is an
equivalence of triangulated categories $H^*(\r{MF}(Y,W)) \cong
D^b_{sing}(W^{-1}(0))$, where $D^b_{sing}(Z)$ is the {\em derived category of
singularities of $Z$}, which vanishes if $Z$ is smooth \cite{Orlov:ys}). We use
the notation
\begin{equation}
    \r{MF}_y(Y,W):=\r{MF}(Y,W-y)
\end{equation}
for the category associated to the fiber over $y\in \C$. There is a
corresponding closed sector group (again associated to each $y \in \C$)
\begin{equation}\label{eq:closedsectorMF}
    \hhmf
\end{equation}
(note that when $W$ has isolated singularities, $H^0(\hhmf)$ is simply the {\em
Jacobian ring} of $W-y$).  One version of HMS for a pair $(X, (Y,W))$ where $X$
is Fano posits that there is a Morita equivalence between $\mc{F}(X) = \coprod_w
\mc{F}_w(X)$ and $\coprod_y \r{MF}_y(Y,W)$ (note that in either case, only
finitely many summands are non-empty); HMS can also be studied a
single summand at a time.

Our Theorem \ref{thm:mainFano} simplifies the task of proving HMS for a pair
$(X, (Y, W))$ in roughly the following way: if one has found a collection of
some Lagrangians $\{L_i\}$ in the monotone symplectic manifold $X$, and
compared the $\ainf$ subcategory of those Lagrangians 
with a split-generating subcategory of $\r{MF}(Y,W)$ 
mirror Landau-Ginzburg model $(Y, W)$, then upon verifying one more hypothesis,
these Lagrangians will split-generate the Fukaya category, implying full HMS.
The additional hypothesis, related to the rank inequality in Theorem 
\ref{thm:mainFano}, is a rank comparison between the zeroth quantum cohomology
and the zero-dimensional piece of the closed string group
\eqref{eq:closedsectorMF} (see the last item below): 
\begin{defn}\label{def:fanocorehms}
    Let $X:=(X,\omega)$ be a monotone symplectic manifold, and $(Y,W)$ an LG
    model. We say the pair $(X, (Y,W))$ satisfies {\em Core HMS for summand
    pair $(w,y) \in \C \times \C$} if there is
    \begin{itemize}
        \item a full sub-category $\mc{A} \subset \mc{F}_w(X)$, 
        \item a full split-generating sub-category $\mc{B} \subset \r{MF}_y(Y,W)$, 
        \item a quasi-equivalence $\mc{A} \cong \mc{B}$, and
        \item a vector space isomorphism $\r{QH}^0(X)_w \cong H^0 (\hhmf)$.
    \end{itemize}
\end{defn}
The last condition is really a statement about a classical `closed string'
mirror equivalence (but just on the level of verifying a rank equality of
vector spaces, not an algebra isomorphism).  We note that both sides
are two-periodically graded, as the differential $[W-y, -]$ only preserves the
parity of the grading of a polyvector field.
\begin{cor}\label{cor:corehmsfano}
    Suppose  $(X, (Y,W))$ satisfies {\em Core HMS for summand pair $(w,y)$}.
    Then, $X$ and $(Y,W)$ are homologically mirror for summand pair $(w,y)$. In
    particular, $\mc{A}$ split-generates $\mc{F}_w(X)$, so there is a
    quasi-equivalence
    \[
        perf( \mc{F}_w(X)) \cong \r{MF}_y(Y,W)
    \]
    (and hence also a ring isomorphism $\r{QH}^*(X)_w \cong H^* (\hhmf)$).
\end{cor}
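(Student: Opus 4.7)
The plan is to verify the hypotheses of Theorem \ref{thm:mainFano} for the subcategory $\mc{A} \subset \mc{F}_w(X)$ furnished by Core HMS, then invoke that theorem together with Corollary \ref{thm:ociso} to extract both the split-generation statement and the ring isomorphism.

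First I would establish that $\mc{A}$ is homologically smooth. Matrix factorization categories $\r{MF}_y(Y,W) = \r{MF}(Y, W-y)$ are homologically smooth by the results of Dyckerhoff, Preygel, and Lin recalled in \S\ref{sec:smoothness}. Since $\mc{B}$ split-generates $\r{MF}_y(Y,W)$, it is Morita equivalent to it, so Proposition \ref{smoothmorita} transfers smoothness to $\mc{B}$, and the quasi-equivalence $\mc{A} \cong \mc{B}$ then yields smoothness of $\mc{A}$. Non-emptiness of $\mc{A}$ follows from the quasi-equivalence as soon as $\mc{B}$ is non-trivial; the degenerate cases in which all summands vanish make the corollary vacuous.

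Next I would verify the rank hypothesis $\mathrm{rk}\ \r{HH}^0(\mc{A}) \geq \mathrm{rk}\ \r{QH}^0(X)_w$. By Morita invariance of Hochschild cohomology, $\r{HH}^*(\mc{A}) \cong \r{HH}^*(\mc{B}) \cong \r{HH}^*(\r{MF}_y(Y,W))$. A now-standard Hochschild--Kostant--Rosenberg type identification for matrix factorizations (see \cites{Dyckerhoff:2011aa, Preygel:2011aa, Lin:2013aa}) identifies the latter with $H^*(\hhmf)$. Combined with the vector space isomorphism $\r{QH}^0(X)_w \cong H^0(\hhmf)$ provided by Core HMS, this shows that $\r{HH}^0(\mc{A})$ and $\r{QH}^0(X)_w$ have the same rank, so hypothesis (\ref{rankinequality}) of Theorem \ref{thm:mainFano} holds (with equality).

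With both hypotheses in hand, Theorem \ref{thm:mainFano} gives that $\mc{A}$ split-generates $\mc{F}_w(X)$, producing the chain of quasi-equivalences $perf(\mc{F}_w(X)) \cong perf(\mc{A}) \cong perf(\mc{B}) \cong \r{MF}_y(Y,W)$. Corollary \ref{thm:ociso} then supplies a ring isomorphism $\r{QH}^*(X)_w \cong \r{HH}^*(\mc{A})$; concatenating this with Morita invariance and the HKR identification yields the desired $\r{QH}^*(X)_w \cong H^*(\hhmf)$ as rings. The one non-cosmetic step I would need to cite carefully is the HKR-type identification $\r{HH}^*(\r{MF}_y(Y,W)) \cong H^*(\hhmf)$; apart from this input from the matrix factorization literature, the proof is a bookkeeping reduction to the main theorems already established in the paper.
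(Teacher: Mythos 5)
Your proof matches the paper's own argument step for step: establish smoothness of $\mc{A}$ via Morita invariance from the matrix factorization category, verify the rank hypothesis through the chain $\r{HH}^0(\mc{A}) \cong \r{HH}^0(\r{MF}_y(Y,W)) \cong H^0(\hhmf) \cong \r{QH}^0(X)_w$, and then invoke Theorem \ref{thm:mainFano}. The only addition is that you make explicit the appeal to Corollary \ref{thm:ociso} for the concluding ring isomorphism, which the paper leaves implicit in the phrase ``hence also.''
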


\begin{proof}[Proof of Corollary \ref{cor:corehmsfano}]
The category $MF_y(Y,W)$ is homologically smooth by the results in
\cites{Dyckerhoff:2011aa, Preygel:2011aa, Lin:2013aa}, and smoothness is a
Morita invariant notion (see Proposition \ref{smoothmorita}).
Since $\mathcal{A}$ is quasi-isomorphic to a category which split-generates
$MF_y(Y,W)$, it follows that $\mathcal{A}$ is non-empty and smooth. Since it is
known \cites{Dyckerhoff:2011aa, Preygel:2011aa, Lin:2013aa} that 
\[
    \r{HH}^*(MF_y(Y,W)) \cong H^* (\hhmf),
\]
it follows by Morita invariance of Hochschild cohomology and the last Core HMS
hypothesis that there are vector space isomorphisms $\r{HH}^0(\mc{A}) \cong
\r{HH}^0(\mc{B}) \cong \r{HH}^0(MF_y(Y,W)) \cong H^0 (\hhmf) \cong
\r{QH}^0(X)_w$. The hypotheses of Theorem
\ref{thm:mainFano} are therefore satisfied.
\end{proof}

\appendix
\section{Abouzaid's criterion implies smoothness}\label{appendix:abouzaidimpliessmooth}

The purpose of this appendix is to recall, and give a simplified proof of, a
result from \cite{ganatra1_arxiv} establishing
a converse to our main Theorems \ref{thm:mainCY} and \ref{thm:mainFano} (or
rather, their means of proof via Abouzaid's criterion):
\begin{thm}[\cite{ganatra1_arxiv}] \label{thm:smoothness}
    If a subcategory $\mc{A} \subset \mc{F}$ of the Fukaya category satisfies Abouzaid's criterion
    then $\mc{A}$ is homologically smooth (hence $\mc{F}$, which is Morita
    equivalent to $\mc{A}$ is too).
\end{thm}
The proof of Theorem \ref{thm:smoothness} occurs at the bottom
of \S \ref{subsec:shriekannulus} and involves two ingredients. 
First, we observe there is a purely algebraic criterion for verifying smoothness of an
$\ainf$ category/algebra $\mathcal{A}$ in terms of the {\em inverse dualizing
bimodule} of $\mathcal{A}$, which is given in \S \ref{smoothnessdual}
(specifically Corollary \ref{cor:smoothnesscriterion}).
Then there is a Floer theoretic commutative diagram (a version of an annulus or
Cardy argument) from \cite{ganatra1_arxiv}, recalled in \S
\ref{subsec:shriekannulus} (specifically Theorem \ref{thm:gthesiscardy}) which
can be applied to deduce the smoothness criterion is satisfied (when Abouzaid's
criterion holds).
\begin{rem}
In \cite{ganatra1_arxiv} Theorem \ref{thm:smoothness} is proven by a different
approach (which is somewhat longer but has a priori stronger consequeces):
first it is shown that the hypothesis of Theorem \ref{thm:smoothness} implies
that products of objects in $\mc{A}$ resolve the diagonal Lagrangian (in a
version of the Fukaya category of the product $X^- \times X$).  Pushing forward by a variant of the
(geometric/Floer-theoretic) ``quilted strips'' functor 
of \cite{MWW1} from the Fukaya category of the product to $\mc{A}$-bimodules
(constructed in \cite{ganatra1_arxiv} using domain moduli spaces described by
Ma'u \cite{Mau:2010lq}), it follows\footnote{once one calculates that this
functor sends the diagonal to the diagonal bimodule and product Lagrangians to
Yoneda bimodules} that the diagonal bimodule is split-generated by Yoneda
bimodules, i.e., that $\mc{A}$ is smooth.  The argument given in this Appendix,
which the author was aware of before \cite{ganatra1_arxiv} was written, gives a
method of deducing smoothness more directly (using fewer Floer-theoretic
arguments) if one is not interested in first resolving the diagonal Lagrangian.
In fact, for a compact symplectic manifold, there is a faster path to a
posteriori showing that the diagonal Lagrangian is split-generated by product
Lagrangians: input Theorem \ref{thm:smoothness} and Theorem
\ref{thm:nondegenerateOC} into \cite[Thm. 7.3]{Abouzaid:2010vn}.
\end{rem}

\subsection{Bimodule duals and a criterion for smoothness}\label{smoothnessdual}
% we recall the notion of bimodule
%dual and its relation to smoothness/perfectness. 
In this subsection we use the notion of bimodule dual 
%(of a bimodule over a category $\cc$) 
to articulate in Proposition
\ref{prop:perfectioncriterion} a criterion for a $\cc\!-\!\cc$ bimodule to be perfect, in analogy with 
%This
%criterion is completely analogous to 
the following criterion for vector spaces: ``a vector space $V$ is finite
dimensional if and only if the canonical map $V^* \otimes V \to \hom(V,V)$ has the
the identity linear map in its image''. Specializing to the diagonal bimodule, we obtain
in Corollary \ref{cor:smoothnesscriterion} a criterion for a (dg or $\ainf$
category) $\cc$ to be smooth.

Recall first (see e.g., \cite{Ginzburg:2007fk} for the case of dgas and
\cite{ganatra1_arxiv} for the
case of $\ainf$ categories) that given any
bimodule $\bb$ over an $\ainf$ category $\cc$, one can form its {\em bimodule
dual} $\bb^!$, whose cohomology for a pair of objects $K,L$ can be
described as a cohomological morphism space in the bimodule category:
%Hochschild cohomology group:
\[
    H^*(\bb^!(K,L)) = 
%    \r{HH}^*(\bb, Y_{K,L}) = 
    H^*(\hom_{\cc-\cc}(\bb, Y_{K,L})).
\]
(the bimodule structure comes from the functoriality of this complex as one varies $K,L$).
In the case of a dga $A$, the bimodule dual of a dg bimodule $B$ is
$B^!:=\mathrm{Rhom}_{A^{op}\otimes A}(B, A^{op} \otimes A)$, where one uses the outer
bimodule structure on $A^{op} \otimes A$ to take $\mathrm{Rhom}$; the inner
structure induces a bimodule structre on $B^!$. 

We will be particularly interested in the case $\bb = \cc_{\Delta}$; in this
case we
call the resulting bimodule dual the {\em inverse dualizing bimodule}
\[
    \cc^! := \cc_{\Delta}^!,
\]
and note that its cohomology, for a pair of objects $K,L$, can be described as a
Hochschild cohomology group 
\[H^*(\cc^!(K,L)) = \r{HH}^*(\cc, Y_{K,L}).\]
Just as the linear dual $V^*$ of a vector space $V$ comes with a canonical map
$V^* \otimes V \stackrel{\alpha}{\to} \hom(V,V)$, note there is a canonical map
(c.f., \cite{ganatra1_arxiv}*{(2.373)})
\[
%    H^*(\cc_{\Delta} \otimes_{\cc\!-\!\cc} \cc^!)  = \r{HH}_*(\cc, \cc^!)
%    \stackrel{[\mu]}{\to} \r{HH}^*(\cc, \cc) = H^*(\hom_{\cc}(\cc_{\Delta},
%\cc_{\Delta}).  
    H^*(\bb^! \otimes^{\mathbb L}_{\cc\!-\!\cc} \bb) \stackrel{[\mu_{\bb}]}{\to} H^*(\hom_{\cc\!-\!\cc}(\bb, \bb)).
%    H^*(\hom_{\cc}(\cc_{\Delta},
%\cc_{\Delta})
\]
(in the dga case, this map is the composition $\mathrm{Rhom}_{A\!-\! A}(B,A^{op}
\otimes A) \otimes_{A\!-\! A}^{\mathbb L} B \to \mathrm{Rhom}_{A\!-\! A}(B,A
\otimes_{A}^{\mathbb L} B \otimes_A^{\mathbb L} A) \to
\mathrm{Rhom}_{A\!-\! A}(B,B)$).
This map can be used to give a criterion for bimodule perfection (item (i) below):
\begin{prop}[Compare Lemma 1.4 of \cite{Abouzaid:2010kx} and Prop. 2.6-2.7 of \cite{ganatra1_arxiv} for the case of modules or objects]
    \label{prop:perfectioncriterion}
%    $\cc$ is smooth if and only if 
    The following are equivalent:
    \begin{enumerate}
    \item[(i)] $[\mu_{\mathcal{B}}]$ has the identity element in its image $[id]_{\bb} \in H^*(\hom_{\cc\!-\!\cc}(\bb,
    \bb))$.
    \item[(ii)] $\bb$ is a perfect bimodule.
    \item[(iii)] $[\mu_{\mathcal{B}}]$ is an isomorphism.  
    \end{enumerate}
    \end{prop}
\def\pp{\mathcal{P}}
%The above Proposition is well-known; 
%(compare \cite{Abouzaid:2010kx}*{\S A}); 
%and is a variation on an argument in e.g.,
%\cite{Abouzaid:2010kx}*{\S A}. 
%we recall a sketch: 
\begin{proof}[Sketch]
If $\bb$ is perfect then it is easy to see (c.f., \cite{ganatra1_arxiv}*{Prop.
2.16}) that $[\mu_\mathcal{B}]$ is an isomorphism ---
one computes that the more general map $\pp^! \otimes^{\mathbb
L}_{\cc\!-\!\cc} \bb \stackrel{\mu}{\to} \hom_{\cc\!-\!\cc}(\pp,
\bb)$ is a quasi-isomorphism whenever $\pp$ is a Yoneda bimodule; moreoever such
maps are (contravariantly) functorial in $\pp$ (e.g., behave well with respect to colimits and
retracts), hence remain quasi-isomorphisms when more generally $\pp$ is perfect. So,
(ii) implies (iii), which tautologically implies (i); it remains to show that (i) (our desired
criterion) in fact implies (ii), that $\mathcal{B}$ is perfect.

Suppose (i) holds, i.e., in particular that $[id]_{\bb} = [\mu_{\bb}] ([\tau])$, 
for some class $[\tau]$. Let us first make the simplifying assumptions that (a)
$\cc$ and $\bb$ are both proper, with finite-dimensional chain-level morphism
spaces and (b) $\cc$ has finitely many objects.
%Now, note
%that 
Fixing specific models, note that (with respect to the bar complex model of
derived tensor product, see e.g., \cite{ganatra1_arxiv}*{Def. 2.20}) the chain
complex $\bb^! \otimes^{\mathbb L}_{\cc\!-\!\cc} \bb$ 
%(see e.g.,
%\cite{ganatra1_arxiv}*{Def. 2.20}), 
admits a natural length filtration; we'll refer to the subcomplex of length
$<R$ by $(\bb^! \otimes^{\mathbb L}_{\cc\!-\!\cc} \bb)^{<R}$; seeing
as the bar complex is a direct sum, any representative $\tau$ lies in a
filtered piece $\tau \in (\bb^! \otimes^{\mathbb L}_{\cc\!-\!\cc} \bb)^{<N}$
for some $N$. 

Now there is a canonical inclusion on the level of chain complexes
\[
    (\bb^! \otimes^{\mathbb L}_{\cc\!-\!\cc} \bb)^{<N} \stackrel{\iota}{\to} \hom_{\cc\!-\!\cc}(\bb, (\cc_{\Delta} \otimes_{\cc}^{\mathbb L} \bb \otimes_{\cc}^{\mathbb L} \cc_{\Delta})^{<N} ),
\]
(compare \cite[eq. (2.357)-(2.360)]{ganatra1_arxiv}). With respect to this map,
$[\mu_{\mathcal{B}}]$ can be described as the (cohomology of the) map that applies $\iota$ and then composes with the canonical ``multiplication'' morphism (which by abuse of notation we also call) 
$\mu_{\mathcal{B}}: (\cc_{\Delta} \otimes_{\cc}^{\mathbb L} \bb \otimes_{\cc}^{\mathbb L} \cc_{\Delta})^{<N} \hookrightarrow (\cc_{\Delta} \otimes_{\cc}^{\mathbb L} \bb \otimes_{\cc}^{\mathbb L} \cc_{\Delta}) \stackrel{\mu_{\bb}}{\to} \bb$ (in \cite{ganatra1_arxiv}*{eq. (2.121)} this last $\mu_{\bb}$ is called ``$\mc{F}_{\Delta, left, right}$'').
In particular,
$\iota (\tau)$ is a morphism in the category of bimodules between $\bb$ and
the bimodule $(\cc_{\Delta} \otimes_{\cc}^{\mathbb L} \bb \otimes_{\cc}^{\mathbb L}
\cc_{\Delta})^{<N}$. Now note that by our simplifying assumptions   
%we note that 
this latter bimodule is in fact perfect (namely, since $\cc$ and $\mc{B}$ are proper with
finite-dimensional chain complexes and $\cc$ has finitely many objects, the
latter bimodule is a finite direct sum of finite-dimensional chain complexes
tensored with Yoneda bimodules).
%(it is a finite complex of Yoneda bimodules, by definition).
%, then each of these filtered sub-bimodules are actually {\em perfect
%bimodules} 
%(the fact that this is perfect uses the fact that $\bb$ and
%$\cc$ are proper with finite-dimensional chain-level morphism spaces), 
Now the
fact that $[\mu_{\mathcal{B}}]([\iota(\mathcal{\tau})]) =  [id]_{\bb}$ is equivalent
to the fact that $\iota(\tau)$ is homologically left invertible with
(homological) left inverse $\mu_{\bb}$. Thus we have shown $\bb$ is a summand of a perfect
bimodule, so is itself perfect.

Finally it remains to lift the assumptions that $\cc$ and $\bb$ are proper (or 
have finite-dimensional co-chain models). We omit the details but instead refer
the reader to an identical argument given in
%of these assumptions can be lifted by an identical argument
%as in 
\cite[\S A, Proof of Lemma 1.4]{Abouzaid:2010kx}; the point is that (since the
bar complex for $\bb^! \otimes^{\mathbb L}_{\cc\!-\!\cc} \bb$ is a direct sum)
any representative $\tau$ lives in a subcomplex of the form $\tau \in \bb^!
\otimes V \otimes \bb$, for $V$ a finite-dimensional chain complex, and hence
the homologically left-invertible morphism from $\bb$ to $(\cc_{\Delta}
\otimes_{\cc}^{\mathbb L} \bb \otimes_{\cc}^{\mathbb L} \cc_{\Delta})^{<N}$ always
factors through a (homologically left invertible morphism into a) perfect
bimodule.  
\end{proof}
%and is an isomorphism whenever
%See Appendix @@CITE for a self-contained proof of
%the converse. 
Applying Proposition \ref{prop:perfectioncriterion} to the diagonal bimodule, we obtain our desired criterion:
\begin{cor}\label{cor:smoothnesscriterion}
    $\cc$ is smooth if and only if the canonical map 
    \[ 
        [\mu_{\cc}]: \r{HH}_*(\cc, \cc^!) = H^*(\cc^! \otimes^{\mathbb L}_{\cc\!-\!\cc} \cc_{\Delta}) \to \r{HH}^*(\cc,\cc) = H^*(\hom_{\cc\!-\!\cc}(\cc_{\Delta}, \cc_{\Delta})).\]
    has the identity in its image (if and only if $[\mu_{\cc}]$ is an isomorphism).  
\end{cor}

\subsection{The Calabi-Yau morphism and an annulus argument from \cite{ganatra1_arxiv}}\label{subsec:shriekannulus}

In \cite{ganatra1_arxiv}, a geometrically defined  (closed) morphism of
bimodules was introduced called the {\em weak smooth CY structure map}:
\begin{equation}\label{cymorphism}
    CY_s: \mc{F}_{\Delta} \to \mc{F}^![n]
\end{equation}
inducing an element $[CY_s] \in \r{HH}^n(\mc{F},
\mc{F}^!)$; these maps are defined by counting discs with two outputs and
arbitrarily many inputs interspersed between them.

\begin{rem}\label{wrappedTFTvscompactTFT}
    Strictly speaking, \cite{ganatra1_arxiv} constructed the map $CY_s$ in the
    case of wrapped Fukaya categories, but the construction carries over
    directly to monotone and tautologically unobstructed cases (and is in fact,
    a strict simplification). It is our understanding that
    Abouzaid-Fukaya-Oh-Ohta-Ono study this map for general compact Fukaya
    categories \cite{afooo}.
\end{rem} 
By applying $\r{HH}_*(\mc{F}, -)$ to \eqref{cymorphism}, or (homologically)
equivalently tensoring with the diagonal bimodule $\mathcal{F}_{\Delta}$, there
is an induced map
\[
    \r{CY}_{s*}: H^*(\mc{F}_{\Delta} \otimes_{\mc{F}\!-\!\mc{F}} \mc{F}_{\Delta}) = \r{HH}_*(\mc{F},\mc{F}) \to 
    \r{HH}_{*+n}(\mc{F}, \mc{F}^!) = H^*(\mc{F}_{\Delta} \otimes_{\mc{F}\!-\!\mc{F}} \mc{F}^!);
\]
in turn, as discussed in \S \ref{smoothnessdual}, there is a canonical map
$[\mu_{\mc{F}}]: H^*(\mc{F}_{\Delta} \otimes_{\mc{F}\!-\!\mc{F}} \mc{F}^!) = \r{HH}_*(\mc{F}, \mc{F}^!) \to 
\r{HH}^*(\mc{F}, \mc{F}) = H^*(\hom_{\mc{F}\!-\!\mc{F}}(\mc{F}_{\Delta}, \mc{F}_{\Delta}))$ for any
$\ainf$ category $\mc{F}$. In conjunction we obtain a map 
\begin{equation}\label{hhdowntohhup}
    [\mu_{\mc{F}}] \circ \r{CY}_{s*}: \r{HH}_*(\mc{F}, \mc{F}) \to \r{HH}^{*+n}(\mc{F}, \mc{F}).
\end{equation}
One of the key results in \cite{ganatra1_arxiv} relates the map
\eqref{hhdowntohhup} to the composition of the closed-open and open-closed map%
\begin{thm}[\cite{ganatra1_arxiv}]\label{thm:gthesiscardy}
    The following diagram commutes, up to an overall sign of $(-1)^{n(n+1)/2}$:
    \[
        \xymatrix{ H^*(\mc{F}_{\Delta} \otimes_{\mc{F}\!-\! \mc{F}} \mc{F}_{\Delta}) = \r{HH}_*(\mc{F},\mc{F}) \ar[d]^{\oc} \ar[r]^{\r{CY}_{s*}} & H^*(\mc{F}^! \otimes_{\mc{F}\!-\! \mc{F}} \mc{F}_{\Delta}) = \r{HH}_*(\mc{F}, \mc{F}^!) \ar[d]^{[\mu_{\mc{F}}]} \\
        \r{QH}^{*+n}(X) \ar[r]^{\co\ \ \ \ \ \ \ \ \ \ \ \ \ \ \ } & \r{HH}^{*+n}(\mc{F}, \mc{F}) = H^*(\hom_{\mc{F}\!-\!\mc{F}}(\mc{F}_{\Delta}, \mc{F}_{\Delta}))}
    \]
    The same diagram commutes when replacing $\mc{F}$ by any full subcategory
    $\mc{A} \subset \mc{F}$ (using the restrictions of the same maps).
\end{thm}
In \cite{ganatra1_arxiv}, this result is proven for the wrapped Fukaya
category, but the methods immediately simplify to recover the result monotone
and tautologically unobstructed cases (where as usual, we can work in a fixed summand
in the monotone case; this notation has been suppressed). The argument involves studying
the moduli space of annuli with arbitrary many input marked points on one
boundary component, and arbitrary many inputs and one output on the other,
along with the constraint that the first input of the first collection and the
output of the second collection are real with differing signs, after some
conformal equivalence with $\{ |z| \in [1,R]\}$. This moduli space has two
degenerations as $R \to \infty$ and $1$; the operations associated to the
(compactification of the) limit as $R \to 1$ are $\mu_{\mc{F}} \circ
\r{CY}_{s*}$, the operation associated to the limit as $R \to \infty$ is $\co
\circ \oc$; therefore these two operations are chain homotopic (with chain
homotopy given by the remaining bubblings of discs).  

We now complete the proof of Theorem \ref{thm:smoothness}:
\begin{proof}[Proof of Theorem \ref{thm:smoothness}]
    Suppose $\mc{A}$ satisfies Abouzaid's criterion, so $\oc:
    \r{HH}_*(\mc{A},\mc{A}) \to \r{QH}^{*+n}(X)$ hits 1. Since $\co$ is a ring
    homomorphism, we conclude that $\co \circ \oc$ hits $[id]_{\mc{A}}$; by the
    commutative diagram of Theorem \ref{thm:gthesiscardy}, this implies that
    $[\mu_{\mc{A}}] \circ \r{CY}_{s*}$, and in particular
    $[\mu_{\mc{A}}]$, hits $[id]_{\mc{A}}$. Now Corollary
    \ref{cor:smoothnesscriterion} implies $\mc{A}$ is smooth.
\end{proof}

\section{Open-closed maps and pairings}\label{sec:pairing}
The goal of this appendix is to provide a self-contained proof, in the simplest cases (i.e., monotone
or tautologically unobstructed cases) of Theorem \ref{thm:ocpairing}
(which is due to \cite{GPS2:2015, afooo}): the open-closed map 
\[
    \oc: \r{HH}_{*-n}(\mathcal{F}(X)) \to QH^*(X)
\]
intertwines pairings, up to an overall sign of $(-1)^{n(n+1)/2}$. As is usual,
we will suppress summand decompositions, instead directing readers to Remark
\ref{rem:summanddecomp} or e.g., \cite{Sheridan:2016} for how to put them back
in. This compatibility
follows from a type of annulus or Cardy argument, similar to those that have
appeared elsewhere in the literature in increasing levels of complexity
\cite{Biran:2009aa,Abouzaid:2010kx, ganatra1_arxiv}. In fact the moduli
spaces appearing are identical to those appearing in the annulus argumentfrom
\cite{ganatra1_arxiv} restated in Theorem \ref{thm:gthesiscardy}, up to
a suitable change of some inputs and outputs. 

In particular, we will give two proofs of Theorem \ref{thm:ocpairing}: the
first proof given here deduces the compatibility of $\oc$ with pairings by
first relating the Shkylarov pairing with another map 
$\r{HH}_*(\mathcal{F}) \to \r{HH}_*(\mathcal{F})^\vee$ induced by the (smooth
and proper) Calabi-Yau structures on $\mathcal{F}$, to which one can apply the
annulus argument of Theorem \ref{thm:gthesiscardy} of \cite{ganatra1_arxiv}
(along with the comparison between $\co$ and the dual of $\oc$ given in
Proposition \ref{occodual}).  The second argument, due jointly to the author
with Perutz-Sheridan \cite{GPS2:2015}, more directly appeals to an annulus
argument to relate the two pairings. (These two arguments are related
morally by ``exchanging certain outputs with inputs'', and indeed there is a
``deformation of data`` type argument, in the spirit of Proposition
\ref{occodual} that could be used to genuinely relate the chain homotopies).

\subsection{Chain-level formulae for the Shklyarov pairing}
We need to recall some conventions about $\ainf$ categories (which we did not
need to explicitly describe for the rest of the paper).
Algebraically, we adopt the convention that in an $\ainf$ category $\cc$, one
has composition maps, for each $d \geq 1$
\begin{equation}\label{compositionmaps}
    \mu^d: \hom_\cc (X_{d-1},X_d) \otimes \cdots \otimes \hom_\cc (X_{0},X_1) \ra \hom_\cc (X_0,X_d)
\end{equation}
of degree $2-d$, satisfying the (quadratic) $\ainf$ relations, for each $k>0$:
    \begin{equation} 
    \label{ainfeq} \sum_{i,l} (-1)^{\maltese_i} \mu^{k-l+1}(x_k, \ldots, x_{i+l+1}, \mu^{l}(x_{i+l}, \ldots, x_{i+1}), x_i, \ldots, x_1) = 0.
\end{equation}
with sign
\begin{equation}\label{ainfsign}
    \maltese_i := ||x_1|| + \cdots + ||x_i||.
\end{equation}
where $|x|$ denotes degree and $||x||:= |x| - 1$ denotes reduced degree. We
adopt the convention of referring to all of these operations as $\mu^*$; $d$ is
implicit from the number of inputs fed in.  
Recall that the (cyclic bar model of the) {\em Hochschild chain complex} of $\cc$ is
\[
    \r{CC}_*(\cc):= \r{CC}_*(\mc{C}, \mc{C}) := \bigoplus_{X_0, \ldots, X_s \in \cc} \hom_{\cc}(X_s, X_0) \otimes \hom_{\cc}(X_{s-1}, X_s) \otimes \cdots \otimes \hom_{\cc}(X_0, X_1)
\]
with the differential as in \cite{Abouzaid:2010kx}*{eq. (5.15)} (with the same
sign conventions in particular). If $\cc$ is proper and moreoever has
finite-dimensional morphism spaces on the chain level, there is an explicit
formula for a chain-level lift of the Shklyarov pairing on $\r{CC}_*(\cc)$
(see \cite[Prop. 5.22]{Sheridan:2015ab}, which gives the $\ainf$ generalization
of a formula in the dg case from \cite[\S 3]{Shklyarov:2013aa}) as follows: 
linearly extend the prescription that
associates to primitive Hochschild chains $\alpha = \alpha_r \otimes \alpha_{r-1}
\otimes \cdots \otimes \alpha_1$ and $\beta =  \beta_s \otimes \beta_{s-1}
\otimes \cdots \otimes \beta_1$, the number 
\begin{equation}\label{shkpairingformula}
    \begin{split}
    \langle \alpha, \beta \rangle_{Shk} := \sum_{i, j,p,q} \mathrm{tr}(  c \mapsto (-1)^{\dagger} \mu^*(\alpha_{i}, \ldots,& \alpha_1, \alpha_r, \cdots \alpha_{i+j+1},\\
    &\mu^*(\alpha_{i+j}, \ldots, \alpha_{i+1}, c, \beta_{p}, \ldots, \beta_1, \beta_s, \ldots, \beta_{p+q+1}), \beta_{p+q}, \ldots, \beta_{p+1})).
\end{split}
\end{equation}
To clarify \eqref{shkpairingformula}, if the expression one is applying $\mu$ to
is not composable in $\cc$, we set the summand to be 0, and $c$ represents an
element in the relevant morphism complex in $\cc$. Finally, we note that the sign $\dagger$ can be expressed as
\begin{equation}
    \begin{split}
    \dagger = &(\sum_{m=1}^i ||\alpha_m||) (1 + \sum_{m=i+1}^k ||\alpha_m|| ) + ||\alpha_r|| + ||\alpha_1|| + \cdots +  ||\alpha_{i-1}|| \\
   &+ (\sum_{n=1}^p ||\beta_n||) (1 + \sum_{n=p+1}^r ||\beta_n||) + ||\beta_s|| + ||\beta_1|| + \cdots + ||\beta_{p-1}|| + |c| |\beta| + ||\beta_{p+1}|| + \cdots + ||\beta_{p+q}||.
   \end{split}
\end{equation}
\begin{rem}
Note that our $\ainf$ sign convention differs from that used in
\cite{Sheridan:2015ab}; to obtain the signs used above, one takes the
formulae in \cite{Sheridan:2015ab}*{Prop. 5.22} and applies the translation
procedure described in \cite{Sheridan:2015ab}*{Remark 3.5}.
\end{rem}

\subsection{The Shklyarov pairing via Calabi-Yau maps}\label{sec:smoothcygeometry}
In this Appendix, we sketch the relationship between the Shkylarov pairing and
other geometric duality structures that have been studied on the Fukaya
category (in \cite{ganatra1_arxiv} and relatedly in ongoing work of
Abouzaid-Fukaya-Oh-Ohta-Ono). 

First we observe that as indicated in Lemma \ref{hhweakcy}, the weak proper Calabi-Yau structure
on the Fukaya category induces a map
\[
    CY_{p*}: \r{HH}^*(\mc{F}, \mc{F}) \to \r{HH}^{*-n}(\mc{F}, \mc{F}^{\vee}) = \r{HH}_{*-n}(\mc{F})^{\vee}
\]
(the last equality is explained in Lemma \ref{hhweakcy}). Composing with the
weak smooth structure, or rather \eqref{hhdowntohhup} we obtain a map
\[
    CY_{p*} \circ [\mu] \circ \r{CY}_{s*}: \r{HH}_*(\mc{F}) \to \r{HH}_{*}(\mc{F})^{\vee}.
\]
A ``deformation of data'' argument, whose proof we omit (as it is in the same
spirit as Proposition \ref{occodual}), implies that this map agrees with the
map $\r{HH}_*(\mc{F}) \to \r{HH}_*(\mc{F})^{\vee}$ induced by the Shklyarov
pairing: 
\begin{prop}\label{prop:shkequalscymucy}
    There is a (homological) equality 
    $CY_{p*} \circ [\bar{\mu}] \circ \r{CY}_{s*} = [\alpha \mapsto \langle \alpha, - \rangle_{Shk}]$.\qed
\end{prop}

\begin{proof}[Proof 1 of Theorem \ref{thm:ocpairing}]
    By Proposition \ref{prop:shkequalscymucy}, $[\alpha \mapsto \langle \alpha,
    - \rangle_{Shk}] = CY_{p*} \circ [\bar{\mu}] \circ \r{CY}_{s*}$. Next, by Theorem \ref{thm:gthesiscardy}, $CY_{p*} \circ [\bar{\mu}] \circ \r{CY}_{s*} = (-1)^{n(n+1)/2} CY_{p*} \circ \co \circ \oc$. Finally, by Proposition \ref{occodual}, $CY_{p*} \circ \co \circ \oc= \oc^{\vee}_1 \circ \oc$, (recall $\oc^{\vee}_1: \tau \mapsto \langle \tau, \oc(-)\rangle_X$) so all together $[\alpha \mapsto \langle \alpha,
    - \rangle_{Shk}] = (-1)^{n(n+1)/2} [\alpha \mapsto \langle \oc(\alpha), \oc(-)\rangle_X]$ as desired.
\end{proof}

\subsection{The Fukaya category and open-closed map}
We will use the geometric consruction of Fukaya categories and notation from
\cite{Seidel:2008zr} (stated for exact Lagrangians and exact symplectic
manifolds) and building on it \cite{Sheridan:2016}*{\S 2} (which is stated for monotone
Lagrangians in a monotone symplectic manifold); either also extends verbatim to the
tautologically unobstructed case where the analysis is even simpler (recall that a Lagrangian is tautologically unobstructed if it bounds no $J$-holomorphic discs for some almost-complex structure $J$).
Following \cite{Sheridan:2016}'s description of the map $\oc$, one first
defines a pairing, which we call
\[
    \langle \oc(-), - \rangle: \r{HH}_*(\mc{F}) \times  \r{QH}^*(X) \to \K
\]
as follows: if $\phi \in \r{HH}_*(\mc{F})$ is a Hochschild chain (in the usual
cyclic bar complex) and $\alpha \in \r{QH}^*(X)$, choose a pseudocycle $f$
representing the Poincar\'{e} dual of the class $\alpha$ and set $\langle
\oc(\phi), \alpha \rangle$ equal to the signed count of the zero-dimensional
component of the moduli space of pseudoholomorphic discs with
Lagrangian boundary conditions and intersection point asymptotics specified by
$\phi$ and interior marked point constrained to lie along the pseudocycle $f$
(this number is independent of choice of $f$ representing $PD(\alpha)$).
Now, applying Poincar\'{e} duality in the
$\r{QH}^*(X)$ factor one obtains $\oc$; i.e., for a basis $\{e_i\}$ of $\r{QH}^*(X)$
with Poincar\'{e} dual basis $\{e^i\}$, 
\[
    \oc(\phi) := \sum_{i} \langle \oc(\phi), e_i \rangle e^i.
\]

\subsection{Degeneration of annuli}

Let $\mc{A}_{r,s}$ denote the abstract moduli space of annuli with 
\begin{itemize}
    \item $r$ positive boundary punctures $z_1, \ldots, z_r$ on the outer boundary component (in clockwise order),
        and

    \item $s$ positive boundary punctures $y_1, \ldots, y_s$ on the inner boundary component (in counterclockwise order); such that

    \item the annulus conformally equivalent after forgetting $z_1, \ldots, z_{r-1}$ and $y_1,
    \ldots, y_{s-1}$ to $S = (\{ z \in \C | 1 \leq z \leq R\}, z_r = -1, y_s = R)$ 
    for some $R$,
\end{itemize}
up to reparametrization. For such an annulus, we call the representative $S$ as
above its {\em standard representative}. On the standard representative, the size of the annulus $R$ and the
(angular) positions of  $z_1, \ldots, z_{r-1}$, $y_1, \ldots, y_{s-1}$ on the
give coordinates on $\mc{A}_{r,s}$; with respect to these coordinates we fix
the orientation
\[
    dR \wedge d y_1 \wedge \cdots \wedge dy_{r-1} \wedge dz_1 \wedge \cdots \wedge dz_{r-1}
\]

We form the Deligne-Mumford compactification
$\bar{\mc{A}}_{r,s}$, which has, as codimension-1 boundary, the following loci: 
\begin{align}
    \label{annulistrata1}  \mc{A}_{r,s-d+1} &\times \mc{R}^d,\ d \geq 1 \\
    \label{annulistrata2}  \mc{A}_{r-d+1,s} &\times \mc{R}^d,\ d \geq 1 \\
    \label{annulistrata3}  (\mc{R}^{ (r-a) + 1 + b} &\times \mc{R}^{a + 1 + (s-b)})^{cyc}\\
    \label{annulistrata4}  \mc{R}_{r,1} &\times_{int} \mc{R}_{s,1}
\end{align}
Here $\mc{R}^d$ is the usual moduli space of discs with $d$ input boundary
punctures and one output boundary puncture (the domains appearing in $\ainf$ structure maps),
and $\mc{R}_{r,1}$ is the moduli of discs with $d$ input boundary punctures and
one interior marked point (the domains appearing in $\oc$). To explain the
strata in some more detail:
in \eqref{annulistrata1}, some
consecutive (with respect to the cyclic ordering) sequence of marked points
$y_{i+1}, \ldots, y_{i+d \mathrm{\ (mod\ d)}}$ has collided and bubbled off of the
inner boundary component. In \eqref{annulistrata2}, similarly a collection of
$d$ adjacent marked points has bubbled off of the outer boundary component (and
$R$ is varying freely). Strata \eqref{annulistrata3}, arising in the limit as
$R \to 1$ consists of all possible configurations of two discs glued to each
other along two boundary marked points as follows: let $\vec{y}_{rest} =
(y_{i+1}, \ldots, y_{i+b})$ be any subsequence of $(y_1, \ldots, y_s)$ not
containing $y_s$, and $\vec{y}_{end} = (y_{i+b+1}, \ldots, y_s, y_1, \ldots,
y_i)$ denote the remainder of the marked points (cyclically reordered to begin
after $\vec{y}_{rest}$; similarly let $\vec{z}_{end} = (z_{j+1}, \ldots,
z_{j+a})$ be a subsequence not containing $z_r$ and let
$\vec{z}_{end} = (z_{j+a+1}, \ldots, z_r, z_1, \ldots, z_j)$ the remaining
points (with the given cyclic reordering). Then, one of  the discs contains
positive marked points labeled by  $(\vec{z}_{end}, z_{in1}, \vec{y}_{rest})$
followed by negative marked point $z_{out1}$ (in that cyclic order); the other contains positive
marked points labeled $(\vec{z}_{rest}, z_{in2}, \vec{y}_{end})$ followed by
negative marked point $z_{out2}$; finally $z_{out1}$ is (nodally) glued to $z_{in2}$ and
$z_{out2}$ is (nodally) glued to $z_{in1}$.
Finally, \eqref{annulistrata4} describes the (codimension 1) limit in which $R \to \infty$, a
configuration of two discs glued to each other along an interior marked point;
besides this point one disc has $s$ boundary marked points (labeled by $y_i$'s)
and the other has $r$ (labeled by $z_i$'s).

Inductively, following the procedure described in \cite{Seidel:2008zr}, equip
$\mc{A}_{r,s}$ with compatible families of strip-like ends and Floer
perturbation data which are consistent with previous choices made along all
strata above, including the $R = 1$ and $R = \infty$ ends (we note that the
strata consist of either $\mc{A}_{r',s'}$ for smaller $r'$, $s'$ which we have
inductively already made choices for, or components which are part of the
$\ainf$ structure or open-closed map, for which we have made such choices
earlier to to define the relevant operations). With respect to such a
set of choices,
given two cyclically ordered tuples of intersections points between Lagrangian
branes, $\vec{a} =  (a_1, \ldots, a_r)$, $\vec{b} = (b_1, \ldots, b_s)$ we
obtain in the usual fashion a moduli space of maps 
\[\mc{A}_{r,s}(\vec{a}, \vec{b})\] 
from an arbitrary annulus (which modulo
reparamaterization lies in $\mc{A}_{r,s}$) solving Floer's equation (with
respect to the perturbation data) with Lagrangian boundary conditions and
asympotics determined by $\vec{a}$ and $\vec{b}$ on each boundary component
respectively. For generic choices of Floer data, our assumptions on $M$ and $L$
imply that there are no sphere or disc bubbles that appear in the
compactification of the 0 and 1-dimensional moduli spaces, and hence
that the Gromov-Floer bordification of the moduli space
obtained by adding maps from broken domains
\eqref{annulistrata1}-\eqref{annulistrata4} as well the usual semi-stable
gluings of Floer strips to maps whose domain lies in $\mc{A}_{r,s}$ is (a) in
the zero-dimensional case, a finite set and (b) in the 1-dimensional case a
compact manifold with boundary described by rigid elements of the broken maps
added.
Before we count general rigid elements of $\overline{\mc{A}}_{r,s}(\vec{a},
\vec{b})$, we examine the operation associated to boundary stratum
\eqref{annulistrata4}.
\begin{lem}\label{diagonalbordism}
The operation associated to the moduli spaces $\mc{R}_{r,1} \times_{int}
\mc{R}_{s,1}$ as $r$ and $s$ vary is chain homotopic to $\langle \oc(-), \oc(-)
\rangle:   \r{CC}_*(\mc{F}) \otimes \r{CC}_*(\mc{F}) \to \K$.
\end{lem}
\begin{proof}[Sketch]
    This is a standard argument, c.f., \cite{Sheridan:2016}*{Proof of Lemma 2.15}:
    Counting maps associated to the configuration \eqref{annulistrata4} with
    discs connected gives a chain map $F: \r{CC}_*(\mc{F}) \otimes
    \r{CC}_*(\mc{F}) \to \K$ by standard arguments.  The moduli problem
    associated to this nodal configuration is equivalent to the problem of counting
    pairs of maps $u, v$ with the domain of $u$ in $\mc{R}_{r,1}$ and the
    domain of $v$ in $\mc{R}_{s,1}$ with the relevant boundary conditions along
    with an interior incidence condition: the pair
    $(u(z_0), v(z_0))$ is constrained to lie along the diagonal cycle
    $[\Delta]$ in $M \times M$. Now, choose a (pseudocycle)
    cobordism $W$ between $\Delta$ and $\sum f_i \times f^i$, where $f_i$
    are pseudocycles Poincar\'{e} dual to the basis $e_i$ of $\r{QH}^*(X)$
    chosen earlier. Counting pairs of maps $u,v$ with incidence condition along
    $W$ defines as usual a chain homotopy 
    between $F$ and $\langle \oc(-), \oc(-) \rangle_X$.
\end{proof}

\begin{proof}[Proof 2 of Theorem \ref{thm:ocpairing}]
Counting rigid elements of $\overline{\mc{A}}_{r,s}(\vec{a}, \vec{b})$ for
varying $\vec{a}$, $\vec{b}$ 
gives a map
\[
    H: \r{CC}_*(\mc{F}) \otimes \r{CC}_*(\mc{F}) \to \K.
\]
By looking at the boundary of 1-dimensional moduli spaces associated to the
same moduli problem, one deduces in the usual fashion that $H \circ (b \otimes
id + id \otimes b) = F - (-1)^{n(n+1)/2} \langle -, - \rangle_{Shk}$, where
$F$ is the operation assocaited to \eqref{annulistrata4}. Note that the
appearance of the sign $(-1)^{n(n+1)/2}$ in this chain homotopy is explained,
for the case $r=s=1$ in \cite[\S 3.9-3.10]{foooasterisque}; see also \cite[\S
5.3]{Seidel:2014aa}; yet another appearance of this sign associated to a slightly
different degeneration of annuli (in terms of what are inputs and what are
outputs along the nodal points when $R \to 1$) appears in \cite[Lemma 6.8]{Abouzaid:2010kx}. The
remaining signs occuring in $\langle -, - \rangle_{Shk}$ are a consequence of the
usual Koszul reordering conventions and will be omitted here; see e.g.,
\cite{Seidel:2008zr, Abouzaid:2010kx} for more details on such computations.
Now Lemma \ref{diagonalbordism} shows that $F$ is chain homotopic to
$\langle \oc(-), \oc(-) \rangle$, completing the proof.
\end{proof}

%%fakesection: bibliography
%\bibliographystyle{amsxport}
\bibliography{/Users/sheelganatra/Dropbox/References/math_bib}
\bibliographystyle{alpha}

\end{document}